\newcommand{\mcs}{\mathcal{S}}
\newcommand{\psd}{\succeq}
\newcommand{\la}{\langle}
\newcommand{\ra}{\rangle}
\newcommand{\ext}{{\rm ext \hspace*{0.5mm}}}
\newcommand{\Ker}{{\rm Ker \hspace*{0.5mm}}}
\newcommand{\rankspace}{{\rm rank \hspace*{0.5mm}}}
\newcommand{\sfT}{{\sf T}}
\newcommand{\lan}{\langle}
\newcommand{\GG}{{\mathcal G}}
\newcommand{\PP}{{\mathcal P}}
\newcommand{\HH}{{\mathcal H}}
\newcommand{\oR}{{\mathbb R}}
\newcommand{\EE}{\mathcal E}
\newcommand{\SSS}{{\mathcal S}}
\newcommand{\gd}{\text{\rm gd}}
\newcommand{\egd}{\text{\rm egd}}
\newcommand{\rank}{{\rm rank \hspace*{0.5mm}}}
\newcommand{\tw}{{\rm tw}}
\newcommand{\hG}{\widehat G}
\newcommand{\ignore}[1]{}
\newcommand{\NP}{\mathsf{NP}}
\newcommand{\conv}{\text{\rm conv}}
\newcommand{\sdp}{\text{\rm sdp}}
\newcommand{\ka}{\kappa}
\newcommand{\sla}{{\rm la_{\boxtimes}}}
\newcommand{\stp}{\boxtimes}
\newcommand{\lda}{{\rm la_{\square}}}
\newcommand{\MM}{{\mathcal M}}
\newcommand{\FF}{{\mathcal F}}
\newcommand{\face}{{F}}
\newcommand{\UU}{{\mathcal U}}
\newcommand{\fib}{{\rm fib}}
\newcommand{\WW}{{\mathcal W}}
\newcommand{\bc}{\begin{center}}
\newcommand{\ec}{\end{center}}
\newcommand{\GGt}{{\mathscr G}_2}
\newcommand{\CC}{{\mathcal C}}
\newcommand{\FFKc}{{\mathcal F}(F_3,K_4)}
\newcommand{\relint}{{\rm relint \hspace*{0.5mm}}}
\newtheorem{theorem}{Theorem}[section]
\newtheorem{corollary}[theorem]{Corollary}
\newtheorem{definition}[theorem]{Definition}
\newtheorem{proposition}[theorem]{Proposition}
\newtheorem{lemma}[theorem]{Lemma}
\newtheorem{example}{Example}[section]
\newtheorem*{theorem*}{Theorem}
\begin{document}
\newpage
\title{Forbidden minor  characterizations for   low-rank optimal solutions to semidefinite programs over the elliptope} 

\author[cwi]{M. E.-Nagy}
\ead{M.E.Nagy@cwi.nl}

\author[cwi,til]{M.~Laurent}
\ead{M.Laurent@cwi.nl}

\author[cwi]{A.~Varvitsiotis\corref{cor1}}
\ead{A.Varvitsiotis@cwi.nl}

\cortext[cor1]{Corresponding author: CWI, Postbus 94079,
	      1090 GB Amsterdam. Tel: +31 20 5924170; Fax: +31 20 5924199.}

\address[cwi]{Centrum Wiskunde \& Informatica (CWI), Science Park 123,
	        1098 XG Amsterdam,
	        The Netherlands.}
\address[til]{Tilburg University, 
P.O. Box 90153, 
5000 LE Tilburg, 
The Netherlands.}


\begin{abstract}
We study a new geometric graph parameter $\egd(G)$, defined as the smallest integer $r\ge 1$ for which any partial symmetric  matrix which is completable to a correlation matrix and  whose entries are specified at the positions of the edges of $G$, can be completed to a  matrix in the convex hull of correlation matrices of $\rank $ at most $r$.
This graph parameter is motivated by its relevance to the problem of finding low rank solutions to semidefinite programs over the elliptope, and also by its relevance to the bounded rank Grothendieck constant. Indeed,  $\egd(G)\le r$ if and only if the rank-$r$ Grothendieck constant of $G$ is equal to 1.
We show that the parameter $\egd(G)$ is minor monotone, we identify several classes of forbidden minors for $\egd(G)\le r$ and we give the full characterization for the case $r=2$. We also  show an upper bound for $\egd(G)$ in terms of a new tree-width-like parameter  $\sla(G)$, defined as the smallest $r$ for which $G$ is a minor of the strong product of a tree and $K_r$. We show that, for  any 2-connected graph $G\ne K_{3,3}$  on at least 6 nodes, $\egd(G)\le 2$ if and only if $\sla(G)\le 2$.
\end{abstract}

\begin{keyword}matrix completion, semidefinite programming, correlation matrix, Gram representation, graph minor, tree-width, Grothendieck constant. 
\end{keyword}

\maketitle

\section{Introduction}

\subsection{Semidefinite programs}
A semidefinite program (SDP) is a convex program defined as the minimization of a linear function over an affine section of the cone of positive semidefinite (psd)  matrices.
Semidefinite programming is a far reaching  generalization of  linear programing with a wide range of applications in a number of different  areas such as approximation algorithms \cite{MG12}, control theory \cite{PL03}, polynomial optimization \cite{L10} and quantum information theory \cite{B12}.
 A semidefinite  program in canonical primal form looks as follows:
\begin{equation}\tag{P}\label{sdpbasic}
\begin{aligned}
\inf   \  \la A_0,X\ra &  \\
 \text{subject to }  \la A_k,X\ra  & = b_k, \qquad  k=1,\ldots,m \\
 X & \psd  0,
\end{aligned}
\end{equation}
where $\la \cdot, \cdot \ra$ denotes the usual Frobenius  inner product of matrices and where $  A_k\  (0\le k \le m)  $ are $n$-by-$n$ symmetric matrices, called the  {\em coefficient} matrices of the SDP.
The generalized inequality $X \psd 0$ means that $X$ is positive semidefinite, i.e., all its eigenvalues are nonnegative. 

The field  of semidefinite programming has grown enormously in recent years. This success  can be attributed to the fact that SDP's  
have significant modeling power, exhibit a powerful duality theory  and  there exist efficient algorithms, both in theory and in practice, for solving them. 

 The first  landmark application of semidefinite programming is the work of Lov\'asz \cite{L79} on approximating the Shannon capacity of graphs   with the theta number, which   gives rise to   the only known  polynomial time algorithm for calculating these parameters in perfect graphs (see \cite{GLS}). Starting with the seminal work of Goemans and Williamson on the max-cut  problem \cite{GW95},   SDP's have also proven to be an invaluable tool in the design of  approximation algorithms for hard combinatorial optimization problems.  This  success is vividly illustrated by the fact that  many  SDP-based approximation algorithms are essentially optimal for a number of problems,  assuming  the validity of the Unique Games Conjecture~(see e.g. \cite{KKMD07,CMM06}).

\medskip
In this paper we consider the   problem of identifying conditions  that guarantee the   existence of low-rank optimal solutions for a certain  class of SDP's. Results of this type  are important  for approximation algorithms. Indeed, SDP's  are widely used as convex tractable relaxations for  hard combinatorial problems. Then, rank-one solutions typically correspond to  optimal solutions of the initial discrete problem and low-rank optimal solutions can  decrease the error of the rounding methods and   lead to improved performance guarantees. 

An illustrative example is  the max-cut problem where we are given as input an edge-weighted graph and the goal is to find a cut of maximum weight. It is  known that, using the Goemans and Williamson semidefinite programming relaxation, the max-cut problem can be approximated in polynomial time  within  a factor of 0.878   \cite{GW95}.
Furthermore, assuming that  this   SDP relaxation for max-cut has an optimal solution of rank 2 (resp., 3), this approximation ratio can be improved to 0.8844 (resp., 0.8818)~\cite{AZ05}. 

 Low-rank solutions to SDP's are also relevant  to the study of geometric representations of graphs. In this setting we consider representations  obtained by assigning vectors to the  vertices of a graph, where we impose  restrictions on the vectors labeling  adjacent vertices  (e.g. orthogonality, or  unit distance conditions). Then,  questions related  to the existence of low-dimensional representations  can be reformulated as the problem of deciding the existence of a low-rank solution to an appropriate SDP,  and they are connected to interesting graph properties 
 (see \cite{L09} for an overview).

\subsection{The Gram and extreme Gram dimension parameters}

Our main goal in  this paper is to  identify combinatorial conditions  guaranteeing  the existence of  low-rank optimal solutions to   SDP's.  This  question has been  raised by  Lov\'asz in  \cite{L98}. Quoting  Lov\'asz \cite[Problem 8.1]{L98} it is important to ``find combinatorial conditions that guarantee that the semidefinite relaxation has a solution of rank 1''.  Furthermore, the version of this problem ``with low rank instead of rank 1, also seems very interesting''.

Our   focus   lies  on  combinatorial conditions that capture  the sparsity  of the coefficient matrices of  a semidefinite program.  To encode this information, with  any semidefinite program of the form  \eqref{sdpbasic} we associate a graph $\mathcal{A}_P=~(V_P,E_P)$, called the {\em aggregate sparsity pattern} of \eqref{sdpbasic}, where $V_P=\{1,\ldots,n\}$ and 
$ij \in  E_P$  if and only if there exists   $k\in \{0,1,\ldots,m\}$ such that  $(\mathcal{A}_k)_{ij}\ne~0.$ 

The structure of the aggregate sparsity pattern of a  semidefinite program  can be used to prove the existence of low-rank optimal solutions. 
 This statement can be formalized by using   the following graph parameter, introduced in \cite{LV12}.

\begin{definition}\label{def:gram}\cite{LV12} The {\em Gram dimension} of a graph $G$ is defined as the smallest integer $r\ge 1$ with the following property:  Any  semidefinite program that attains its optimum and whose aggregate sparsity pattern is a subgraph of  $G$  has an optimal solution of rank at most $r$.   
\end{definition}

It is shown in~\cite{LV12} that the graph parameter $\gd(G)$ is minor monotone. Consequently, by the celebrated graph minor theorem of Robertson and Seymour \cite{RS04},   for any fixed integer $r\ge 1$, the graphs satisfying $\gd(G)\le r$ can be characterized by a finite list of minimal forbidden minors.  
The forbidden   minors  for  the graphs with $\gd(G)\le r$   for the values  $r=2,3$ and 4 were identified in \cite{LV12}. 
\begin{theorem}\label{thm:gd}\cite{LV12}
For any graph $G$ we have that
 \begin{itemize}
\item[(i)] $\gd(G)\le 2\ $ if and only if   $G$ has no $K_3$-minor,
 \item[(ii)] $\gd(G)\le 3 \ $  if and only if  $G$ has no $K_4$-minor,
 \item[(iii)] $\gd(G)\le 4 \ $  if and only if $G$   has no $K_5$ and $K_{2,2,2}$-minors.
 \end{itemize}
 \end{theorem}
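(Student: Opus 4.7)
The plan is to prove both directions of each equivalence, leveraging the minor-monotonicity of $\gd(G)$ (which the paper asserts was established in \cite{LV12}).

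\textbf{Necessity (forbidden minors are obstructions).} By minor-monotonicity of $\gd$, for the ``only if'' direction of each item it suffices to verify $\gd(K_3)\ge 3$, $\gd(K_4)\ge 4$, $\gd(K_5)\ge 5$, and $\gd(K_{2,2,2})\ge 5$. The inequality $\gd(K_n)\ge n$ is immediate: consider the SDP with constraints $\la E_{ii},X\ra =1$ and $\la E_{ij}+E_{ji},X\ra =0$ for all $i\ne j$; its unique feasible (hence optimal) point is $X=I_n$, whose rank is $n$, and its aggregate sparsity pattern is a subgraph of $K_n$. The bound $\gd(K_{2,2,2})\ge 5$ is the subtler one: I would construct a partial psd matrix specified on the $12$ edges of the octahedron that admits a unique psd completion of rank $5$, coming from a rigid ``near-Euclidean'' representation of $K_{2,2,2}$ in $\oR^5$ that does not collapse to four dimensions.

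\textbf{Sufficiency (absence of the minors forces low rank).} Each case is handled by first invoking the structural characterization of the minor-free class and then an upper bound on $\gd$ for that class.
\begin{itemize}
\item[(i)] Graphs with no $K_3$-minor are exactly forests. For a forest, any partial psd matrix extends to a psd completion of rank at most $2$ by induction on leaves: remove a leaf, complete the rest to rank $\le 2$, and choose the remaining entry to preserve the rank. Hence $\gd(G)\le 2$.
\item[(ii)] Graphs with no $K_4$-minor are exactly the series-parallel graphs, equivalently graphs of treewidth $\le 2$. The general estimate $\gd(G)\le \tw(G)+1$ is proved by processing a tree decomposition with bags of size $\le t+1$ from the leaves to the root and maintaining a psd completion of rank $\le t+1$ at each step, hence $\gd(G)\le 3$.
\item[(iii)] For graphs with no $K_5$- and no $K_{2,2,2}$-minor one needs a structural decomposition: every $2$-connected such graph is either of treewidth $\le 3$ or splits along separators of size $\le 3$ into pieces that are planar or of bounded treewidth. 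Combining this with a clique-sum gluing lemma (the Gram dimension is preserved under clique-sums along complete subgraphs contained in the completion) yields $\gd(G)\le 4$.
\end{itemize}

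\textbf{Main obstacle.} The delicate case is clearly (iii). The lower bound $\gd(K_{2,2,2})\ge 5$ is not reachable by generic arguments since obvious partial matrices (e.g.\ diagonal $1$ and all specified off-diagonals $0$) admit low-rank completions; a tailored SDP whose dual multipliers force rank $5$ is needed. On the sufficiency side, $\{K_5,K_{2,2,2}\}$-minor-free graphs form a \emph{strictly larger} family than treewidth-$3$ graphs (the forbidden minors for $\tw\le 3$ also include the Wagner graph $V_8$ and the pentagonal prism), so no purely tree-width bound can deliver $\gd\le 4$ on the nose. One must genuinely use the absence of precisely these two minors, and the clique-sum gluing lemma must be sharp enough to handle the $3$-sums appearing in the decomposition without rank blow-up.
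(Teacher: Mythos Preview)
This theorem is not proved in the present paper; it is cited from \cite{LV12} as background, so there is no in-paper proof to compare your attempt against. That said, let me assess your outline on its own merits.

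Parts (i) and (ii) are fine. The necessity via $\gd(K_n)=n$ is immediate, and the sufficiency via the bound $\gd(G)\le \tw(G)+1$ applied to forests and to $K_4$-minor-free (series-parallel, treewidth $\le 2$) graphs is the standard route.

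Part (iii) has a genuine gap on the sufficiency side. You correctly diagnose that $\FF(K_5,K_{2,2,2})$ is strictly larger than the class of treewidth-$3$ graphs (the extra obstructions being $V_8$ and the pentagonal prism), so the treewidth bound alone cannot close the argument. But your proposed remedy --- a decomposition ``along separators of size $\le 3$ into pieces that are planar or of bounded treewidth'' --- is not an established structure theorem for this minor-closed class, and more importantly it cannot work as stated: $K_{2,2,2}$ itself is planar and has $\gd(K_{2,2,2})=5$, so ``planar piece'' gives you nothing toward $\gd\le 4$. You would need a direct proof that every \emph{$K_{2,2,2}$-minor-free} planar graph has $\gd\le 4$, which is essentially the whole problem again.

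The actual proof in \cite{LV12}, as the paper hints immediately after the theorem statement, proceeds quite differently: it exploits a correspondence between the Gram dimension and the Euclidean realizability parameter of Belk and Connelly \cite{Belk,BC07}. Their result is that a graph is $3$-realizable (every completable partial Euclidean distance matrix on $G$ has a completion realizable in $\oR^3$) precisely when $G$ has no $K_5$ or $K_{2,2,2}$ minor; transporting this through the Gram/EDM dictionary yields $\gd(G)\le 4$. Your proposal does not touch this connection, and without it the sufficiency in (iii) remains unproved.
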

Moreover it is  shown  in \cite{LV12} that there are  close  connections between   the Gram dimension and results concerning  Euclidean graph realizations  of Belk and Connelly \cite{Belk, BC07} and with linear algebraic properties of positive semidefinite  matrices, whose zero pattern is prescribed  by a fixed graph \cite{H96}.


\medskip
 In this paper we restrict our attention  to   SDP's  involving only constraints on the diagonal entries, namely, requiring  that every feasible matrix has all  its diagonal entries equal to 1. 
 Specifically, for  a graph $G=([n],E)$ and a vector of edge-weights $w \in \oR^E$, we consider SDP's of the following form: 
\begin{equation}\tag{$P^w_G$}\label{p2}
\sdp(G,w)=\max \sum_{ij \in E}w_{ij}X_{ij} \ \text{  s.t.  } \  X_{ii}=1\ (i\in [n]),\ X \psd 0.
\end{equation}

Semidefinite programs of this form arise naturally in the context of approximation algorithms. As an example, the Goemans-Williamson SDP relaxation of the max-cut problem (when formulated as a linear program in $\pm 1$ variables) fits  into this framework.  

Clearly,  for any $w \in\oR^E$, the optimal value of \eqref{p2} is attained since the objective function is linear and   the feasible region is a compact set. Moreover,  the aggregate sparsity pattern of \eqref{p2} is a subgraph  of  $G$. Consequently, 
 the semidefinite program   \eqref{p2} has an optimal solution of rank at most~$\gd(G)$ (recall Definition \ref{def:gram}).
Our objective in this paper is to strengthen the upper bound $\gd(G)$.  For this we introduce the following graph parameter.

\begin{definition}
 The {\em extreme Gram dimension}  of a graph $G=([n],E)$, denoted  $\egd(G)$,  is  defined as the smallest integer $r\ge 1$ such that,  for any $w \in \oR^E$, the program \eqref{p2} has an optimal solution of rank at most $r$. 
 \end{definition}
 
It follows from the definitions that $\egd(G)$ is upper bounded by $\gd(G)$, i.e., $$\egd(G)\le \gd(G).$$
Moreover, this inequality is strict, for instance, for the complete graph $K_n$. Indeed, as we will see in Section \ref{secbound},
\begin{equation*}\label{relKn}
\egd(K_n)=\left\lfloor {\sqrt{8n+1}-1\over 2}\right\rfloor <n=\gd(K_n), \text{ for } n \ge 2.
\end{equation*}

 
We will show that the graph parameter $\egd(\cdot)$ is minor monotone (Lemma~\ref{lemminor}).   Consequently, by the celebrated graph minor theorem of Robertson and Seymour, for any fixed integer $r \ge 1$, the class of graphs satisfying $\egd(G)\le r$ can be characterized by a finite list of minimal forbidden minors. It is known that a graph $G$ has $\egd(G)\le1$ if and only if $G$ has no $K_3$-minor \cite{Lau97}. Our main result in this paper is the  characterization of the graphs satisfying $\egd(G)\le 2$ in terms of two forbidden minors (cf. Theorem \ref{thm:main}).

 \medskip 
 
Next, we give a series of reformulations for the extreme Gram dimension that will be useful throughout the paper.  We start  by introducing  some necessary definitions and relevant  notation. Throughout, $\SSS^n$  denotes   the set of $n\times n$ symmetric matrices,  $\SSS^n_+$ is the cone of positive semidefinite (psd)  matrices and $\SSS^n_{++}$ is the cone of positive definite matrices.  A psd matrix whose diagonal entries are all equal to one  is called a {\em correlation matrix}.
The set 
 $$\EE_n=\{X\in \SSS^n_+\ :\  X_{ii}=1\ (i\in [n])\}$$
of all $n \times n$ correlation matrices   is known as  the {\em elliptope}. 
 For an integer $r\ge 1$, we define also 
the (in general non-convex) {\em bounded rank elliptope}   $$\EE_{n,r}=\{X\in \EE_n\ :\  \rankspace X\le r\}.$$
Given a graph $G=([n],E)$,   $\pi_E$  denotes  the projection from $\SSS^n$ onto the subspace $\oR^E$ indexed by the edge set of $G$, i.e., 
\begin{equation*}\label{projections}
\pi_E : \mcs^n \rightarrow \oR^E \qquad X \mapsto (X_{ij})_{ij \in E}.
\end{equation*}
Lastly, the {\em elliptope} of  the  graph $G=([n],E)$  is defined as    the projection of the elliptope $\EE_n$ onto the subspace indexed by the edge set of $G$:
$$\EE(G)=\pi_E(\EE_n).$$

The study of the elliptope  is motivated by its relevance  to the positive semidefinite matrix completion problem. Indeed, the  elements of $\EE(G)$ can be seen as the $G$-partial matrices that admit a completion to a full correlation matrix. A $G$-partial matrix is  a matrix whose entries are specified only at the off-diagonal positions corresponding to edges of $G$ and at the diagonal positions, with all diagonal entries being equal to 1. Consequently, deciding whether such a $G$-partial matrix admits a positive semidefinite  completion is equivalent to deciding membership in the elliptope $\EE(G)$.

\medskip
We now give the first  reformulation  for the extreme Gram dimension. For a graph $G=([n],E)$ and  $w\in \oR^E$, consider the rank-constrained~SDP:
\begin{equation}\label{pythasvlbcduw8er}
\sdp_r(G,w)=\max \sum_{ij \in E} w_{ij}X_{ij} \ \text{ s.t. } \ X \in \EE_{n,r}.
\end{equation}
 Then, it is clear that $\egd(G)$  can be equivalently defined   as the smallest integer $r\ge 1$ for which equality holds:
\begin{equation}\label{oirghncssoeuf}
\sdp(G,w)=\sdp_r(G,w), \  \text{ for all } w \in \oR^E.
\end{equation}
\if 0 we introduce  a new graph parameter, whose study   is  motivated  by its relevance to  bounded rank Grothendieck inequalities and  
to  bounded rank semidefinite matrix completions. This  parameter  has also some close connections to some Colin de Verdi\`ere spectral graph  parameters and to some topological treewidth-like graph parameters.
\fi 

\medskip
For the second reformulation of the parameter $\egd(\cdot)$, we first observe    that program \eqref{p2}  is equivalent~to
\begin{equation}\label{p2b}
\sdp(G,w)=\max\  w^\sfT x \ \text{ s.t. } \ x \in \EE(G),
\end{equation}
and thus it corresponds to    optimization over the projected elliptope $\EE(G)$.
On the other hand, as  its objective function 
 is linear,    the (non-convex rank constrained)  program~\eqref{pythasvlbcduw8er}  can be equivalently reformulated as  optimization over the convex set $\pi_E(\conv (\EE_{n,r})$). That is, 
\begin{equation}\label{rel1bis}
 \sdp_r(G,w)=\max \ w^\sfT x \ \text{ s.t. }\ x \in \pi_E(\conv (\EE_{n,r})).
 \end{equation}
   Then, in view of \eqref{oirghncssoeuf},  we arrive at  the following geometric  reformulation for~$\egd(\cdot)$.

\begin{lemma}\label{def:egdgeom} The {\em extreme Gram dimension} of a graph $G=([n],E)$  is equal to the smallest integer $r\ge1$ for which  \index{dimension!extreme Gram}
\begin{equation}
\EE(G)=\pi_E(\conv(\EE_{n,r})).
\end{equation}
\end{lemma}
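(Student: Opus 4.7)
The plan is to recognize this as a standard duality/support-function argument: two compact convex sets in $\oR^E$, one contained in the other, are equal if and only if they have the same support function. Applying this to the sets $\EE(G)$ and $K_r := \pi_E(\conv(\EE_{n,r}))$ will reduce the condition ``$\sdp(G,w)=\sdp_r(G,w)$ for all $w\in\oR^E$'' from \eqref{oirghncssoeuf} to the set-equality $\EE(G)=K_r$, which is exactly what the lemma asks.

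First I would check that both sets are compact and convex. The elliptope $\EE_n$ is a compact convex subset of $\SSS^n$; its subset $\EE_{n,r}$ is closed (the rank-at-most-$r$ condition is closed) hence compact, so in finite dimension $\conv(\EE_{n,r})$ is compact convex, and $K_r=\pi_E(\conv(\EE_{n,r}))$ inherits these properties under the linear map $\pi_E$. Likewise $\EE(G)=\pi_E(\EE_n)$ is compact convex. Next, since $\EE_{n,r}\subseteq\EE_n$ and $\EE_n$ is convex, we have $\conv(\EE_{n,r})\subseteq\EE_n$, and projecting yields the ``free'' inclusion
\[
K_r \;=\; \pi_E(\conv(\EE_{n,r})) \;\subseteq\; \pi_E(\EE_n) \;=\; \EE(G).
\]

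The second step is to match the SDP values with support-function values. By \eqref{p2b}, $\sdp(G,w)=\max\{w^\sfT x : x\in\EE(G)\}$ is the support function of $\EE(G)$ at $w$. For the rank-constrained program, linearity of the objective gives
\[
\sdp_r(G,w)=\max_{X\in\EE_{n,r}} \sum_{ij\in E} w_{ij}X_{ij} \;=\; \max_{X\in\conv(\EE_{n,r})}\sum_{ij\in E} w_{ij}X_{ij},
\]
and since this objective depends only on $\pi_E(X)$, the right-hand side equals $\max\{w^\sfT x : x\in K_r\}$, which is exactly \eqref{rel1bis}. So the condition \eqref{oirghncssoeuf} characterizing $\egd(G)\le r$ becomes the statement that $\EE(G)$ and $K_r$ have identical support functions on $\oR^E$.

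Finally I would invoke the separating-hyperplane theorem to conclude. Since $K_r\subseteq\EE(G)$ and both are closed convex, equality of support functions forces $K_r=\EE(G)$: otherwise some $x^*\in\EE(G)\setminus K_r$ could be strictly separated from $K_r$ by a hyperplane, yielding $w\in\oR^E$ with $w^\sfT x^*>\max_{x\in K_r} w^\sfT x$, contradicting $\sdp(G,w)=\sdp_r(G,w)$. The converse direction is trivial. Combining with \eqref{oirghncssoeuf}, $\egd(G)$ is indeed the smallest $r\ge 1$ such that $\EE(G)=\pi_E(\conv(\EE_{n,r}))$. There is no real obstacle here; the only care needed is to verify compactness of $\conv(\EE_{n,r})$ (which uses finite-dimensionality) so that the separation argument applies cleanly.
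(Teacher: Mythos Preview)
Your proof is correct and follows essentially the same approach as the paper: the paper observes (in the discussion just before the lemma) that $\sdp(G,w)$ and $\sdp_r(G,w)$ are the support functions of $\EE(G)$ and $\pi_E(\conv(\EE_{n,r}))$ respectively, so that the condition \eqref{oirghncssoeuf} defining $\egd(G)\le r$ is equivalent to the set equality. You have simply supplied the details (compactness, the automatic inclusion, and the separation argument) that the paper leaves implicit.
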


Using this geometric reformulation for the parameter $\egd(\cdot)$,  we are now in a position to   explain  why we have chosen to name it as the {\em extreme} Gram dimension. 
Since  the inclusion  $\pi_E(\conv(\EE_{n,r})) \subseteq \EE(G)$ is always valid, it follows from Lemma \ref{def:egdgeom} that  $\egd(G)$  is equal to the smallest $r\ge 1$ for which the reverse inclusion $\EE(G) \subseteq\pi_E(\conv(\EE_{n,r})) $ holds.  Moreover, since  $\EE(G)$ is a compact convex subset of $\oR^{E}$, by the Krein--Milman theorem, 
 $\EE(G)$ is equal to the convex hull of its set of extreme points. 
 With $\ext \EE(G)$ denoting the set of extreme points of $\EE(G)$, 
 it follows that 
 $$\EE(G)\subseteq \pi_E(\conv (\EE_{n,r})) \Longleftrightarrow \ext \EE(G)\subseteq \pi_E(\EE_{n,r}).$$
 Summarizing, the parameter  $\egd(G)$ can be reformulated as the smallest integer $r\ge 1$ for~which:
\begin{equation}\label{eq:inequality}
\ext \EE(G)\subseteq \pi_E(\EE_{n,r}).
\end{equation}
In other words, $\egd(G)$ is equal to the smallest $r\ge 1$ for which every extreme point of $\EE(G)$ has a positive semidefinite  completion of rank at most $r$.

\if 0 
\subsection{The Gram dimension of a graph}

The study of the elliptope  is motivated by its relevance  to the positive semidefinite matrix completion problem. To understand this relation it will be useful  to  think  of a vector $x \in \oR^E$ as a $G$-partial matrix, i.e., a partial symmetric matrix $X$  whose diagonal entries are equal to one and an off diagonal entry $X_{ij}$  is   specified if and only  if $ij \in E$, and the corresponding value is equal to  $x_{ij}$.  Entries corresponding  to nonedges of $G$ are unspecified. Then, the  elements of $\EE(G)$ correspond to  $G$-partial symmetric matrices having   a completion to a full correlation matrix. Consequently, deciding whether a $G$-partial matrix admits a psd completion is equivalent to deciding membership in $\EE(G)$. Moreover, notice that deciding whether a partial matrix $x \in \oR^E$ admits a psd completion is a semidefinite programming feasibility problem.

Given a partial matrix $x \in \EE(G)$ one can also ask for the smallest rank of a psd completion of $x$.  We refer to this problem as the {\em low-rank psd matrix completion problem}. The problem of identifying low-rank psd completions of a partial matrix is also relevant  for applications since abstract notions like complexity and dimensionality can be often expressed by means of the rank of some  appropriate matrix. As an  example,  the minimum  dimension of a Euclidean embedding of a  finite metric space can be expressed as the rank of an appropriate psd matrix (see e.g. \cite{DL97}). 

It is known that  the structure of the underlying graph $G$ can be used to address the low-rank psd matrix completion problem. In order to formalize this result the following definition will be crucial.

\begin{definition}\cite{LV12} \label{def:gramdim} The {\em Gram dimension} of a graph $G=([n],E)$, denoted by $\gd(G)$, is defined as the smallest integer $r\ge 1$ such that 

$$  \EE(G)=\pi_E(\EE_{n,r}).$$
\end{definition}
Since $\EE(G)=\pi_E(\EE_n)=\pi_E(\EE_{n,n})$,  the Gram dimension of  a graph is well-defined and it is always upper bounded by the number of nodes of the graph. For a graph $G=([n],E)$ with $\gd(G)=k$ it follows from  the definition  of the Gram dimension that for any $G$-partial matrix which admits at least one psd completion,  also has a psd completion of rank at most $k$.



In this paper we focus on semidefinite programs over the elliptope,  for which we  can improve on the $\gd(\mathcal{A}_p)$ bound. Specifically, for  a graph $G=([n],E)$ and a vector of edge-weights $w \in \oR^E$, consider the following semidefinite program:
\begin{equation}\tag{$P^w_G$}\label{p2}
\sdp(G,w)=\max \sum_{ij \in E}w_{ij}X_{ij} \ \text{  s.t.  } \  X_{ii}=1\ (i\in [n]),\ X \psd 0.
\end{equation}
Clearly  the optimal value of \eqref{p2} is attained since the objective function is linear and   the feasible region is a compact set. Moreover, notice that the aggregate sparsity pattern of \eqref{p2} is a subset of the graph $G$.  Consequently, Theorem~\ref{thm:gram} implies that program \eqref{p2} has an optimal solution of rank at most $\gd(G)$.  Nevertheless, the bound from Theorem~\ref{aghwfoerferg} is valid for  arbitrary SDP's and does not take into account the specific structure of the problem at hand. 
Our main goal in this paper  is  to understand how to   exploit the combinatorial structure of the graph $G$, in order to improve on the $\gd(G)$ bound.

 \if 0 
 Our first result in this chapter is to show  that this parameter is minor monotone. Thus, for any fixed $r\ge 1$, the graphs satisfying $\egd(\cdot)\le r$ can be characterized by a list of minimal forbidden minors. For the case $r=1$, the only excluded minor is the graph $K_3$ \cite{Lau97}.  Our main result in this chapter is to identify the minimal forbidden minors for the case $r=2$.
 Additionally, we introduce a new  a treewidth-like graph parameter, denoted by ${\rm la}_{\boxtimes}(\cdot)$,  which we call the {\em strong largeur d'arborescence}.  For a graph $G$,   ${\rm la}_{\boxtimes}(G)$ is defined as the smallest  integer $k\ge 1$ such that $G$ is a minor of the strong graph  product $T\boxtimes K_k $, where $T$ is a tree and $K_k$ denotes  the complete graph on $k$ vertices. In this chapter we show that the extreme Gram dimension is upper bounded by  ${\rm la}_{\boxtimes}(\cdot)$. Lastly, we obtain the forbidden minor characterization of graphs with ${\rm la}_{\boxtimes}(G)\le 2$.

Consider a graph $G=([n],E)$ and a vector of edge weights  $w \in \oR^E$. In this chapter we focus on semidefinite programs  of the form 
\begin{equation}\tag{$P^w_G$}\label{sdfgswfpsfef}
\sdp(G,w)=\max \sum_{ij \in E}w_{ij}X_{ij} \ \text{ s.t. }\ X \in \EE_{n},
\end{equation}
where $\EE_n$ denotes the set of $n$-by-$n$ correlation matrices (positive semidefinite  matrices with diagonal entries equal to one). Our main objective in this chapter   is to exploit the combinatorial structure of the graph $G$, in order to get guarantees for the existence of low-rank optimal solutions  to \eqref{sdfgswfpsfef}.

Since  the feasible region of \eqref{sdfgswfpsfef} is a  compact set and the objective function is linear it follows    that  for any $w \in \oR^E$, its optimal value is attained. Then, Theorem~\ref{aghwfoerferg} applied to  \eqref{sdfgswfpsfef} implies that for any $w \in \oR^E$, the program   \eqref{sdfgswfpsfef}  has an optimal solution of rank at most $\gd(G)$. 

As we will see  in this chapter, for semidefinite programs  of the form \eqref{sdfgswfpsfef} we can improve on the $\gd(G)$ bound. In order to achieve this  we  introduce a new graph parameter, called the {\em extreme Gram dimension} of graph, which we denote by $\egd(\cdot)$. 
Our first goal is  to  give a reformulation  for the extreme Gram dimension which spells out the  link of this parameter with the Gram dimension and explains  why we chose to name the parameter in this way. 
\fi

\if 0 In this paper we investigate a new graph invariant $\egd(G)$,   motivated  by its relevance to  bounded rank Grothendieck inequalities and  
to  bounded rank semidefinite matrix completions. This  new geometric graph parameter  has also some close connections to some Colin de Verdi\`ere spectral graph  parameters and to some topological tree-width-like graph parameters.

For any  integer $r\ge 1$, we have the following chain of inclusions:

\begin{equation}\label{eq:inclusions}
\pi_E(\EE_{n,r})\subseteq \pi_E(\conv(\EE_{n,r})) \subseteq \pi_E(\EE_n)=\EE(G).
\end{equation}
Hence  a natural question is to determine what is  the smallest value of $r\ge 1 $ for which equality holds in the above chain of inclusions.
Equality between the sets on the left and on the right side of  (\ref{eq:inclusions}) has been  considered in~\cite{LV12},  where the following graph parameter is introduced and studied.

Here we investigate when  equality holds at the right inclusion of  (\ref{eq:inclusions}), which leads to   the following graph parameter.
\fi



\fi 

\subsection{Relation with the Gram dimension}

As we will now see, both  $\gd(\cdot)$ and $\egd(\cdot)$ can be phrased within the common framework  of   Gram representations introduced below.  This reformulation will allow  us to clarify the relationship between these two parameters.

\begin{definition}\label{def:gramdimx} Given  a graph $G=([n],E)$ and 
a vector $x\in \oR^E$, a {\em Gram representation} of $x$ in $\oR^r$
is a set of unit vectors $p_1,\ldots,p_n\in \oR^r$  such that 
$$x_{ij}=p_i^\sfT p_j\  \   \forall  \{i,j\}\in  E.$$
The {\em Gram dimension} of $x \in \EE(G)$, denoted by $\gd(G,x)$, is the smallest integer $r\ge 1$ for which $x$ has such a Gram representation in $\oR^r$.
\end{definition}


Recall that, for a matrix $X\in \mathcal S^n$,   $X \in \EE_n$  if and only if there exists a family of unit vectors $p_1,\ldots,p_n$ such that $X_{ij}=p_i^\sfT p_j$ for all $i,j\in [n]$.  Hence, for a vector $x \in \EE(G)$, it is easy to see that $\gd(G,x)$ is equal to the smallest rank of a completion for $x$ to a full correlation matrix.  

Using this notion of Gram representations, we find the following equivalent definition for the Gram dimension of a graph, as originally introduced in \cite{LV12}. We sketch a proof of this fact  for clarity.

\begin{lemma}
For any graph $G=(V,E)$,
\begin{equation}\label{eq:gdrefor}
\gd(G)=\max_{x\in \EE(G)} \gd(G,x).
\end{equation}
\end{lemma}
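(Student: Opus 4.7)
The plan is to prove the two inequalities in \eqref{eq:gdrefor} separately, in each case relating semidefinite optimality to the existence of a low-rank correlation matrix completion.

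For the direction $\max_{x\in\EE(G)}\gd(G,x)\le \gd(G)$, I would fix an arbitrary $x\in\EE(G)$ and consider the feasibility SDP
\[
\inf\ \la 0,X\ra \quad\text{s.t.}\quad X_{ii}=1\ (i\in V),\ X_{ij}=x_{ij}\ (ij\in E),\ X\psd 0.
\]
Since $x\in\EE(G)$, this program is feasible and so trivially attains its optimum. Its aggregate sparsity pattern is exactly $G$: the diagonal-constraint matrices $E_{ii}$ contribute no off-diagonal entries, while each edge constraint has coefficient matrix $E_{ij}+E_{ji}$ contributing precisely the edge $ij$. By Definition~\ref{def:gram} this SDP has an optimal solution $X^*\in\EE_n$ with $\rankspace X^*\le \gd(G)$, and such $X^*$ is a correlation matrix completion of $x$, yielding $\gd(G,x)\le \gd(G)$.

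For the reverse direction, write $r:=\max_{x\in\EE(G)}\gd(G,x)$ and take any SDP of form~\eqref{sdpbasic} whose aggregate sparsity pattern is a subgraph of $G$, attaining its optimum at some $X^*\psd 0$. I would produce an optimal solution of rank at most $r$ as follows. Let $I:=\{i\in[n]:X^*_{ii}>0\}$; positive semidefiniteness forces the rows and columns of $X^*$ indexed by $[n]\setminus I$ to vanish, so the principal submatrix $X^*_I$ has the same rank as $X^*$ and each $\la A_k,X^*\ra$ depends only on $X^*_I$ and on the fixed zero entries. Setting $D:=\diag((X^*_{ii})_{i\in I})$, the matrix $Y:=D^{-1/2}X^*_I D^{-1/2}$ lies in $\EE_{|I|}$. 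The induced subgraph $G[I]$ satisfies $\max_{y'\in\EE(G[I])}\gd(G[I],y')\le r$, since any correlation matrix on $I$ extends to a correlation matrix on $[n]$ by appending an identity block, and Gram representations restrict to subsets of vertices. Applying this to the projection of $Y$ onto the edges of $G[I]$ produces $Z\in\EE_{|I|}$ with $\rankspace Z\le r$ and $Z_{ij}=Y_{ij}$ on the edges of $G[I]$. The matrix $W^*$ obtained by extending $D^{1/2}ZD^{1/2}$ with zeros on $[n]\setminus I$ is then psd of rank at most $r$ and coincides with $X^*$ on the diagonal and on every edge of $G$; hence $\la A_k,W^*\ra=\la A_k,X^*\ra$ for every $k$, so $W^*$ is feasible and optimal, giving $\gd(G)\le r$.

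The only mild obstacle is the bookkeeping around the index set $I$ of positive diagonal entries: the zero rows and columns of $X^*$ outside $I$ must be handled both in the SDP reduction and in the transfer of the completion hypothesis from $G$ to $G[I]$. Once these steps are in place, the rest of the argument reduces to the standard rescaling $X^*_I\leftrightarrow Y$ between general psd matrices and correlation matrices.
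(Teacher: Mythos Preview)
Your proof is correct and follows essentially the same approach as the paper: for one inequality you use the feasibility SDP with aggregate sparsity pattern $G$, and for the other you rescale an optimal solution to a correlation matrix, invoke the hypothesis to find a low-rank completion, and scale back. The only difference is cosmetic---you handle the zero-diagonal case explicitly via the index set $I$ and the passage to $G[I]$, whereas the paper dismisses this case in one parenthetical remark.
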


\begin{proof}
Let $x\in \EE(G)$. Then $\gd(G,x)$ is the smallest $r$ for which the  SDP:
$$\min \ 0 \ \text{ s.t. } X\in \EE_n,\ X_{ij}=x_{ij}\ \forall ij\in E$$
has an optimal solution of rank at most $r$.
As the aggregate sparsity pattern of this SDP is equal to  $G$, it follows that $\gd(G)\ge \gd(G,x)$. This shows the inequality
$\gd(G)\ge \max_{x\in\EE(G)}\gd(G,x)$.
We now show the reverse inequality: $\gd(G)\le \max_{x\in\EE(G)}\gd(G,x)=:r.$
For this consider an SDP of the form (\ref{sdpbasic}) whose aggregate sparsity pattern is a subgraph of $G$.
Let $X$ be an optimal solution of (\ref{sdpbasic}); we construct another optimal solution $X'$ with rank at most $r$.
For simplicity let us assume that all diagonal entries of $X$ are positive (if not, just work with the principal submatrix of $X$ with only positive diagonal entries). With $D$ denoting the diagonal matrix with diagonal entries $\sqrt{X_{ii}}$, we can rescale $X$ so that $Y:= D^{-1}XD^{-1}$ belongs to $\EE_n$. Then, the projection $y=\pi_E(Y)$ belongs to $\EE(G)$. Hence there exists a matrix $Y'\in \EE_n$ of rank at most $r$ such that $\pi_E(Y')=y$. Scaling back, the matrix $X'=DY'D$ is a psd completion of $x$ and thus it is also an optimal solution of (\ref{sdpbasic}). Moreover, $X'$   has rank at most $r$, which concludes the proof.
\end{proof}

Moreover, 
we have  the following analogous reformulation for the extreme Gram dimension. 
\begin{lemma}\label{eq:reforegd}For any graph $G$, 
\begin{equation}\label{eq:egd}
  \egd(G)=\max_{x \in \ext\EE(G)}\gd(G,x).
  \end{equation}
\end{lemma}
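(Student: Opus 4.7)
The plan is to derive this essentially as a direct unpacking of the earlier reformulation (\ref{eq:inequality}), using the elementary fact that $\pi_E(\EE_{n,r})$ is precisely the set of $x \in \EE(G)$ admitting a correlation completion of rank at most $r$.

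The key observation is this: for any $x \in \EE(G)$, we have $x \in \pi_E(\EE_{n,r})$ if and only if $\gd(G,x) \le r$. This is because $x \in \pi_E(\EE_{n,r})$ means there exists $X \in \EE_n$ of rank at most $r$ with $\pi_E(X) = x$, and by the standard Gram factorization of psd matrices, such an $X$ of rank $\le r$ corresponds exactly to a choice of unit vectors $p_1,\ldots,p_n \in \oR^r$ with $p_i^\sfT p_j = x_{ij}$ for all $ij \in E$, i.e., to a Gram representation of $x$ in $\oR^r$.

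With this observation in hand, I would rewrite the inclusion in (\ref{eq:inequality}) elementwise:
\[
\ext \EE(G) \subseteq \pi_E(\EE_{n,r}) \iff \gd(G,x) \le r \text{ for all } x \in \ext \EE(G) \iff \max_{x\in \ext\EE(G)} \gd(G,x) \le r.
\]
Since $\egd(G)$ is, by (\ref{eq:inequality}), the smallest $r \ge 1$ for which the left-hand inclusion holds, it is equally the smallest $r \ge 1$ for which $\max_{x\in \ext\EE(G)} \gd(G,x) \le r$. This is exactly the statement $\egd(G) = \max_{x\in \ext\EE(G)} \gd(G,x)$.

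There is no real obstacle here; the only subtlety worth being explicit about is that $\EE(G)$ is compact (being a continuous projection of the compact set $\EE_n$), so the set $\ext \EE(G)$ is nonempty and the maximum on the right-hand side is attained. One could also briefly note, for symmetry with the proof of (\ref{eq:gdrefor}), that the maximum is finite because $\gd(G,x) \le n$ for every $x \in \EE(G)$.
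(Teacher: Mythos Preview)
Your proposal is correct and follows essentially the same approach as the paper: both arguments unpack the characterization (\ref{eq:inequality}) using the observation that $x\in\pi_E(\EE_{n,r})$ is equivalent to $\gd(G,x)\le r$. The paper phrases this as two separate inequalities while you package it as a single equivalence of minimization problems, but the content is identical.
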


\begin{proof}
Let $x\in \ext \EE(G)$. By \eqref{eq:inequality}, $x$ has a psd completion of rank at most $\egd(G)$ and thus $\gd(G,x)\le \egd(G)$.
This shows  $\max_{x\in \ext\EE(G)} \gd(G,x)\le \egd(G)$. The reverse inequality 
follows directly from~\eqref{eq:inequality}.
\end{proof}

Combining \eqref{eq:gdrefor} and \eqref{eq:egd} we find again the inequality: $\egd(G)\le \gd(G)$.

\subsection{Relation with the bounded rank Grothendieck constant} 

The study of  programs of the form \eqref{pythasvlbcduw8er} is of significant practical interest,   the main motivation  coming from statistical mechanics and in particular from the {\em $r$-vector model} introduced by Stanley \cite{S68}. This model consists of an interaction graph $G=(V,E)$, where vertices correspond to particles and edges indicate whether there is interaction (ferromagnetic or antiferromagnetic)  between the corresponding pair of particles. Additionally, there is a potential function $A:~V\times V\rightarrow  \oR$ satisfying $A_{ij}=0$ if $ij\not \in E$, $A_{ij}>0$ if there is ferromagnetic interaction between $i$ and $j$ and $A_{ij}<0$ if there is antiferromagnetic interaction between $i$ and $j$.  Additionally, particles possess a vector valued {\em spin} given by a function $f: V \rightarrow  \mathbb{S}^{r-1}$, where $ \mathbb{S}^{r-1}$ denotes the unit sphere in $\oR^r$. Assuming that there is no external field acting on the system, its total energy is given by the {\em Hamiltonian} defined as \index{Hamiltonian}
$$H(f)=-\sum_{ij\in E}A_{ij} f(i)^\sfT f(j).$$
A {\em ground state}  \index{ground state} is a configuration of spins that minimizes the Hamiltonian. The case $r=1$ corresponds to the {\em Ising model}, the case $r=2$ corresponds to the {\em XY model} and the case $r=3$ to the {\em Heisenberg model}.   Consequently, calculating the Hamiltonian and computing ground states in any of these models amounts to solving a rank-constrained semidefinite program of the form \eqref{pythasvlbcduw8er}.  

As the rank function is  non-convex and  non-differentiable, such problems are     computationally challenging.  Indeed, problem (\ref{pythasvlbcduw8er}) (or its reformulation (\ref{rel1bis})) is hard.  In  the case  $r=1$, the feasible region of \eqref{rel1bis} is equal to the cut polytope of the graph $G$  (in $\pm 1$ variables) and thus (\ref{rel1bis})   is $\NP$-hard. 
 It is believed that \eqref{pythasvlbcduw8er} is also  $\mathsf{NP}$-hard for any fixed  integer $r\ge 2$ (cf., e.g., the quote of Lov\'asz \cite[p. 61]{Lo01}). For any $r\ge 2$, it is shown in \cite{ELV12}  that membership in $\pi_E(\conv (\EE_{n,r}))$
is $\NP$-hard.  This  motivates the need for identifying tractable instances for~\eqref{pythasvlbcduw8er}.

\medskip
Clearly \eqref{p2} is a semidefinite programming  relaxation for program \eqref{pythasvlbcduw8er} obtained by removing the rank constraint. The quality of this relaxation is measured by its integrality gap defined below.

\begin{definition}The {\em  rank-$r$ Grothendieck constant} of a graph $G$, denoted as $\ka(r,G)$,  is defined as 
\begin{equation}
\ka(r,G)=\sup_{w \in \oR^E} \frac{\sdp(G,w)}{\sdp_r(G,w)}.
\end{equation}
\end{definition}


For $r=1$,  the special case where  $G$ is a complete bipartite graph  was studied by A. Grothendieck \cite{Gr}, although in a quite different language, and 
for general graphs by Alon et al. \cite{AMMN05}. The general case $r\ge 2$ is studied by Bri\"et et al. \cite{BOV10},   their main motivation  being   the polynomial-time approximation of ground states of spin glasses.


The extreme Gram dimension of a graph is closely related to the rank-r Grothendieck constant of a graph  as we now point out.
Indeed it follows directly  from the definitions that a graph has extreme Gram dimension at most $r$ if and only if its rank-$r$ Grothendieck constant is 1, i.e.,
$$\egd(G)\le r \Longleftrightarrow \kappa(r,G)=1.$$
Hence, for any  graph $G$  with  $\egd(G)\le r$,    we have that   $\sdp(G,w)=\sdp_r(G,w)$ for all $ w \in \oR^E$, and thus the value of program \eqref{pythasvlbcduw8er} can be approximated within arbitrary precision in polynomial time.

\subsection{Contributions and outline of the paper}
We now briefly summarize the main contributions of the paper. Our first result is to show that the new graph parameter $\egd(G)$ is minor monotone. As a consequence the class $\GG_r$ consisting of all  graphs $G$ with $\egd(G)\le r$ can be characterized by finitely many minimal forbidden minors. It is known that for the case $r=1$ the only forbidden minor is $K_3$, i.e., the class $\GG_1$ consists of all forests  \cite{Lau97}.  One of the main contributions of this paper is a complete characterization of the class $\GG_2$ (Theorem \ref{theomain}).

Furthermore, we identify three families of graphs $F_r, G_r$,  $H_r$ which are forbidden minors for the class $\GG_{r-1}$. This gives all the minimal forbidden minors for $r\le 2$. The graphs $G_r$ were already considered in \cite{V98,Ko01}.

On the other hand  we show an upper bound for the extreme Gram dimension  in terms of a tree-width-like parameter. 
This graph parameter, which we denote as $\sla(G)$, is defined as the smallest integer $r$ for which $G$ is a minor of the strong product $T\stp K_r$ of a tree $T$ and  the complete graph $K_r$. We  call it the {\em strong largeur d'arborescence} of $G$, in analogy with the {\em largeur d'arborescence} $\lda(G)$ introduced by Colin de Verdi\`ere \cite{V98}, defined similarly  by replacing the strong product with the  Cartesian product of graphs. Another main contribution is to show the upper bound: $\egd(G)\le \sla(G)$.

Our main result is that, for a graph $G\ne K_{3,3}$ which is 2-connected and has  at least 6 nodes,  $\egd(G)\le 2$ if and only if $\sla(G)\le 2$ if and only if $G$ does not have $F_3$ or $H_3$ as a minor.
We also characterize the graphs with $\sla(G)\le 2$ and recover the characterization of \cite{Ko00} for the graphs with $\lda(G)\le 2$.

The results and techniques in the paper come in two flavours: in Section \ref{sec_forbminor} they rely mostly on the geometry of faces of the elliptope and linear algebraic tools to construct suitable extreme points of the projected elliptope and, in Section \ref{secmainG2},  they are purely graph theoretic.

\medskip
The paper is organized as follows.
Section \ref{secprel} contains preliminaries about graphs 
and basic facts about the geometry of the faces of the elliptope. 
In Section \ref{secprop} we study properties of the new graph parameter $\egd(G)$. In particular, in Section \ref{secminor} we show minor-monotonicity and investigate the behaviour under the clique-sum graph operation  and in Section \ref{secbound} we show some bounds on $\egd(G)$.
In Section \ref{secupperbound}  we introduce the strong largeur d'arborescence parameter $\sla(G)$ and we show that it upper bounds the extreme Gram dimension, i.e.,  that  $\egd(G)\le \sla(G)$.
In Sections \ref{secFr}-\ref{secHr} 
we compute the extreme Gram dimension of the
three graph classes $F_r$, $G_r$ and $H_r$ 
and in  Section \ref{secK33} we compute the extreme Gram dimension of  the graphs $K_5$ and $K_{3,3}$, which play a special role within the class $\GG_2$.
Section \ref{secmainG2} is devoted to identifying the forbidden minors  for the class $\GG_2$. In  Section \ref{secchordal} we characterize the chordal graphs in $\GG_2$ (Theorem \ref{theofree}). In Section \ref{secnoK4} we show that any graph with no minor $F_3$ or $K_4$ admits a chordal extension avoiding these two minors (Theorem \ref{theonoF3K4}) and in Section \ref{secnoH3} we show the analogous result for graphs with no $F_3$ and $H_3$ minor (Theorem \ref{theonoF3H3}).
Finally in Section \ref{seclast}  we characterize the graphs with $\sla(G)\le 2$, we explain the links to results about $\lda(G)$, and we point out connections with the graph parameter $\nu(G)$ of Colin de Verdi\`ere \cite{V98}.

\section{Preliminaries}\label{secprel}
\subsection{Preliminaries about graphs}

We recall some definitions about graphs.
Let $G=(V,E)$ be  a  graph, we also denote its node set by $V(G)$ and its edge set by $E(G)$. A {\em component} is a maximal connected subgraph of $G$. 
A   {\em cutset}  is  a set $U\subseteq V$ for which $G\backslash U$ 
(deleting the nodes in $U$) has more connected components than $G$, $U$ is  a {\em cut node} if $|U|=1$, and  $G$ is {\em 2-connected} if it is connected and has no cut node.  
  For  $W\subseteq V$, $G[W]$ is the subgraph induced by $W$.
 Given  $\{u,v\}\not\in E(G)$,   $G+\{u,v\}$ is the graph obtained by adding the edge $\{u,v\}$ to $G$.

Given an edge $e=\{u,v\}\in E$, $G\backslash e=(V,E\setminus \{e\})$ is the graph obtained from $G$ by {\em deleting} the edge $e$ and  $G\slash e$ 
is obtained  by {\em contracting} the edge $e$: Replace  the two nodes $u$ and $v$ by a new node, adjacent to all the neighbors of $u$ and $v$. 
A graph $M$ is  a {\em minor} of  $G$, 
denoted as $M \preceq G$,  
if $M$ can be obtained from $G$  by a series of edge deletions and contractions and node deletions. 
Equivalently, 
$M$ is a minor of a connected graph $G$ if  there is a partition  of $V(G)$ into nonempty  subsets $\{ V_i: i\in V(M)\}$ where each $G[V_i]$ is connected and, for each edge 
 $\{i,j\}\in E(M)$, there exists at least one edge in $G$ between  $V_i$ and  $V_j$. 
 The collection $\{V_i: i\in V(M)\}$ is called an {\em $M$-partition} of $G$ and the $V_i$'s  are  the  {\em classes} of the partition. 

Given a finite list $\MM$ of graphs, $\FF(\MM)$ denotes the collection of all graphs that do not admit any  graph in $\MM$ as a minor. 
  By the celebrated graph minor theorem of Robertson and Seymour \cite{RS04}, any family  of graphs which is closed under the operation of  taking minors is of the form $\FF(\MM)$ for some finite set $\MM$ of graphs. In this setting, closed means that every minor of a graph in the family is also contained  in the family. 
 
 A {\em graph parameter} is any function from the set of graphs (up to isomorphism)  to the natural numbers.   A graph parameter $f(\cdot)$ is called   {\em minor monotone} if 
  $$f(G\backslash e)\le f(G) \text{ and }  f(G\slash e) \le f(G),$$ for any graph $G$ and any edge $e$ of $G$.  Given a minor monotone graph parameter $f(\cdot)$ and a  fixed integer $k\ge 1$, the family  of graphs $G$ satisfying  $f(G)\le k$ is closed under taking minors. Then, for any fixed integer $k\ge 1$ there exists a forbidden minor characterization for the family of graphs satisfying $f(G)\le k$.

A {\em homeomorph} (or subdivision) of a graph $M$  is obtained by replacing its edges  by paths.  When $M$ has maximum degree at most 3, $G$ admits $M$ as a minor if and only if it contains a homeomorph of $M$ as a subgraph.
 
A {\em clique} in $G$ is a set of pairwise adjacent nodes and $\omega(G)$ denotes the maximum cardinality of a clique in $G$.
A $k$-clique is a clique of cardinality~$k$.

Let  $G_1=(V_1,E_1)$, $G_2=(V_2,E_2)$ be two graphs,  where  $V_1\cap V_2$  is a clique in both $G_1$ and $G_2$. Their {\em clique sum} is the graph $G=(V_1\cup V_2, E_1\cup E_2)$, also called their 
{\em clique $k$-sum} when $k=|V_1\cap V_2|$.

If $C$ is a circuit in $G$, a {\em chord} of $C$ is an edge $\{u,v\}\in E$ where $u$ and $v$ are two nodes of $C$ that are not consecutive on  $C$.
$G$ is said to be {\em chordal} if every circuit of length at least 4 has a chord. As is well known, a graph $G$ is chordal if and only if $G$ is a clique sum of cliques.

The {\em Cartesian product} \index{graph!Cartesian product} of two graphs $G=(V,E)$ and $G'=(V',E')$, denoted by  $G\square G'$, \index{graph!strong product}
  is the graph 
with  node set $V\times V'$, where  distinct   nodes $(i,i'), (j,j')\in V\times V'$ are adjacent in $G\square G'$ when $i=j$ and $(i',j')\in G'$,  or $(i,j)\in G$ and $i'=j'$.

\newcommand{\tcolred}{\textcolor{red}}
\newcommand{\tcolgreen}{\textcolor{green}}
\newcommand{\tcolblue}{\textcolor{blue}}
\newcommand{\tcolmag}{\textcolor{magenta}}

The {\em strong product} of two graphs $G=(V,E)$ and $G'=(V',E')$, denoted by  $G\stp G'$, \index{graph!strong product}
  is the graph 
with  node set $V\times V'$, where distinct   nodes $(i,i'), (j,j')\in V\times V'$ are adjacent in $G\stp G'$ when
$i=j$ or $(i,j)\in E$, and $i'=j'$ or $(i',j')\in E'$.

The  {\em treewidth} of a graph $G$, denoted by $\tw(G)$,  
 is the smallest integer $k\ge 1 $ such that $G$ is contained in a clique sum of copies of $K_{k+1}$. This  parameter  was introduced by Robertson and Seymour in their fundamental work on graph minors  \cite{RS04} and is  commonly used  in the parameterized complexity analysis of graph algorithms. It is known that   $\tw(\cdot)$ is a minor-monotone graph parameter and that  $\tw(K_n)=n-1$ (see e.g. \cite{Diest}).
   It follows from the above definition that,   if  $G$ is obtained as the clique sum of $G_1$ and $G_2$ then 
\begin{equation}\label{tr:csum}
\tw(G)=\max\{ \tw(G_1),\tw(G_2)\}.
\end{equation}
 Colin de Verdi\`ere \cite{V98} introduced the following treewidth-like parameter: 
The {\em largeur d'arborescence} of a graph $G$, denoted by $ \lda(G)$,  is  the smallest  integer $r\ge 1$ for which $G$ is a minor of $T\Box K_r$ for some tree T.  Then,
$$\tw(G)\le \lda(G)\le \tw(G)+1,$$
 where the upper bound is shown in \cite{V98} and the lower bound in \cite{H96}.
 
The {\em strong largeur d'arborescence} of a graph $G$, denoted by $\sla(G)$, is  the smallest  integer $r\ge 1$ for which $G$ is a minor of $T\stp K_r$ for some tree T. 
In Section \ref{secupperbound} we will come back to this  parameter and we will show that it is  an upper bound for the extreme Gram dimension.


\if 0 It follows directly from their definition that the graph parameters $\tw(G)$, $\lda(G)$ and $\sla(G)$ are minor monotone. Moreover, if $G$ is the clique $k$-sum of  $G_1$ and $G_2$, then
 $f(G)=\max\{f(G_1), f(G_2)\}$ when $f(G)=\tw(G)$ \cite{}; the same holds for  the parameters $f(G)= \lda(G)$  and $\sla(G)$ when $k\le 1$ (see \cite{Ko00}). 
\fi  

\subsection{Preliminaries about positive semidefinite matrices}\label{secmat}
Throughout we set $[n]=\{1,\dots,n\}$.  For   $U\subseteq [n]$ and $X \in \SSS^n$, $X[U]$ denotes the principal submatrix of $X$ with row and column indices in $U$ and,
 for $j\in [n]$, $X[\cdot, j]$ denotes the $j$-th column of $X$.
For a set $A\subseteq \oR^n$, $\lan A\ra$ denotes the vector space spanned by $A$ and  $\conv A$ denotes the convex hull of $A$. 

We group here some basic properties of    positive semidefinite matrices that we will use throughout. Given vectors $p_1,\ldots,p_n \in \oR^k$ ($k\ge 1$), we let  the matrix ${\rm Gram}(p_1,\ldots,p_n)=(p_i^\sfT p_j)_{i,j=1}^n$ denote  their  {\em Gram matrix}. 
Then, the rank of  ${\rm Gram}(p_1,\ldots,p_n)$  is equal to $ \dim \la p_1,\ldots,p_n\ra$.
 If $X=  {\rm Gram}(p_1,\ldots,p_n)$ we also  say that the $p_i$'s form  a  {\em Gram representation} of $X$.  As is well known, a  matrix $X\in \SSS^n$ is positive semidefinite if and only if 
\begin{equation}\label{gram}
 \exists p_1,\ldots,p_n\in \oR^k \text{ for some } k\ge 1  \text{ such that } X={\rm Gram}(p_1,\ldots,p_n).
\end{equation}
For $X\in \SSS^n$, $\Ker X$ is the kernel of $X$, consisting of the vectors $u\in \oR^n$ such that $Xu=0$.
When $X\succeq 0$, $u\in \Ker X$ (i.e., $Xu=0$) if and only if $u^TXu=0$.
Given  a matrix  $X\in \SSS^n$ in block-form
$$X=\left(\begin{matrix}A & B^T\cr B & C\end{matrix}\right),$$ the following holds: 
\begin{equation}\label{relker}
\text{if } X\succeq 0, \text{ then } \Ker A\subseteq \Ker B.
\end{equation}

\subsection{The geometry  of the elliptope }\label{sec:geom}


 In this section we group some geometric properties  of the elliptope, which we will need in the paper. First we recall some basic definitions and facts about faces of convex sets.

Given  a convex set $K$, a set $F\subseteq K$ is a {\em face} of $K$ if, for all $x\in F$, the condition $x=ty+(1-t)z$ with $y,z\in K$ and $t\in (0,1)$ implies $y,z\in F$. For $x\in K$ the smallest face $F(x)$ of $K$ containing $x$ is well defined, it is the unique face of $K$ containing $x$ in its relative interior.
A point $x\in K$ is an {\em extreme point} of $K$ if $F(x)=\{x\}.$ We denote the set of extreme points of a convex set $K$ by $\ext K$.
A useful property of extreme points is that if $F$ is a face of the convex set $K$~then 
\begin{equation}\label{extremepoints}
\ext F \subseteq \ext K.
\end{equation}
Moreover, $z$ is said to be a {\em perturbation} of $x\in K$ if $x\pm \epsilon z\in K$ for some $\epsilon>0$, then the segment $[x-\epsilon z,x+\epsilon z]$ is contained in $F(x)$ and the dimension of $F(x)$ is equal to the dimension of the linear space $\PP(x)$ of perturbations of $x$.



\medskip
We now   recall some facts about the faces of the elliptope that we need here. We refer e.g. to \cite{LP96} for details.
 For a matrix $X \in \EE_n$,    the smallest face $\face(X)$ of $\EE_n$  containing $X$ is given by
\begin{equation}\label{eq:ellrank}
\face(X)=\{Y\in \EE_n: \Ker X \subseteq \Ker Y\}.
\end{equation}
Therefore,    two matrices in the relative interior of a face $F$ of $\EE_n$ have the same rank, while 
 $\rank X > \rank Y$ if $X$ is in the relative interior of $F$ and $Y$ lies  on the boundary of $F$.
 Here is the explicit description of the space $\PP(X)$ of perturbations of a matrix $X\in \EE_n$.

  \begin{proposition}(\cite{LT}, see also \cite[\S 31.5]{DL97})\label{propface}
   Let $X\in \EE_n$ with rank $r$. Let     $u_1,\ldots, u_n\in \oR^r$ be a Gram representation of $X$, let   $U$   be the $r\times n$ matrix with columns $u_1,\ldots,u_n$ and set 
$\UU_V=\lan u_1u_1^\sfT,\ldots, u_nu_n^\sfT\ra\subseteq \SSS^r.$
    The space of perturbations $\PP(X)$ at $X$ is given by 
    \begin{equation}\label{relpert}
    \PP(X)=U^\sfT \UU_V^\perp U= \{U^\sfT RU: R\in \SSS^r, \lan R, u_iu_i^\sfT \ra =0 \ \forall i\in [n]\}
    \end{equation}
    and  the dimension of the smallest face $\face(X)$ of $\EE_n$ containing $X$ is 
\begin{equation}\label{eqdim}
\dim \face(X)=\dim \PP(X)={r+1\choose 2}-\dim \ \UU_V.
\end{equation}
In particular, $X$ is an extreme point of $\EE_n$ if and only if 
\begin{equation}
\label{eqdimextreme0}
{r+1\choose 2} =\dim \ \UU_V.
\end{equation}
Hence, if $X \in \ext\EE_n$ with $\rank X =r$ then 
\begin{equation}\label{eqdimextreme}
{r+1\choose 2}\le n.
\end{equation}
  \end{proposition}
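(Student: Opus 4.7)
The plan is to identify perturbations of $X$ with symmetric matrices on the range of $X$ that annihilate each $u_iu_i^\sfT$, and then read off all the remaining statements from the standard facts about faces of convex sets and dimensions of spans.

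First I would fix the Gram factorization $X=U^\sfT U$ where $U$ is the $r\times n$ matrix with columns $u_1,\dots,u_n$. Since $\rank X = r$, $U$ has full row rank $r$, and the diagonal condition $X_{ii}=1$ reads $\|u_i\|=1$. A perturbation $Z\in\PP(X)$ must satisfy (i) $Z_{ii}=0$ for all $i$ (to preserve correlation diagonals), and (ii) $X\pm\epsilon Z\succeq 0$ for some $\epsilon>0$. Using the description (\ref{eq:ellrank}) of the smallest face of $\EE_n$ containing $X$, condition (ii) forces $\Ker X\subseteq \Ker Z$, equivalently $\ran Z \subseteq \ran X = \ran U^\sfT$. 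Therefore every such $Z$ can be written as $Z=U^\sfT R U$ for a unique $R\in \SSS^r$ (uniqueness coming from $U$ having a right inverse). The diagonal conditions $Z_{ii}=u_i^\sfT R u_i=0$ translate to $\lan R, u_iu_i^\sfT\ra=0$ for every $i$, i.e.\ $R\in \UU_V^\perp$. This gives the inclusion $\PP(X)\subseteq U^\sfT \UU_V^\perp U$.

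For the reverse inclusion I would verify that every $Z=U^\sfT RU$ with $R\in \UU_V^\perp$ is indeed a perturbation: the diagonal of $Z$ vanishes by construction, and since $X=U^\sfT U$ and $U^\sfT(I_r\pm\epsilon R)U\succeq 0$ for $\epsilon>0$ small enough (as $I_r\pm\epsilon R\succeq 0$ for small $\epsilon$), we get $X\pm\epsilon Z\in\EE_n$. This proves (\ref{relpert}). Combined with the general fact $\dim F(X)=\dim \PP(X)$ and the injectivity of the linear map $R\mapsto U^\sfT RU$ on $\SSS^r$ (which follows from $U$ having a right inverse, so $U^\sfT RU=0\Rightarrow R=0$), I obtain
\[
\dim \face(X) = \dim \PP(X) = \dim \UU_V^\perp = \binom{r+1}{2}-\dim \UU_V,
\]
which is (\ref{eqdim}).

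The remaining two statements then follow immediately. The matrix $X$ is extreme in $\EE_n$ iff $\face(X)=\{X\}$ iff $\dim\face(X)=0$, which by (\ref{eqdim}) gives (\ref{eqdimextreme0}). Finally, since $\UU_V$ is a subspace of $\SSS^r$ spanned by the $n$ matrices $u_iu_i^\sfT$, we have $\dim \UU_V\le n$, so $\binom{r+1}{2}=\dim\UU_V\le n$ for any extreme $X$, yielding (\ref{eqdimextreme}). There is no real obstacle here; the only point that needs care is justifying the passage $\ran Z\subseteq \ran X$ rigorously (via the kernel inclusion from (\ref{eq:ellrank}) and (\ref{relker})), and checking the injectivity of $R\mapsto U^\sfT RU$, both of which are routine linear algebra once $U$ is fixed with full row rank.
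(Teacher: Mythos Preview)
Your proof is correct and follows the standard route to this result. Note, however, that the paper does not actually give its own proof of this proposition: it is stated as a cited result from \cite{LT} (see also \cite[\S 31.5]{DL97}), so there is no in-paper argument to compare against. Your argument is essentially the one found in those references: factor $X=U^\sfT U$, use the kernel inclusion from the face description (\ref{eq:ellrank}) to push any perturbation $Z$ into the range of $U^\sfT$, parametrize it as $U^\sfT RU$, and read off the diagonal constraints as $R\in\UU_V^\perp$. The only delicate step you flag---that $\Ker X\subseteq \Ker Z$ really yields $Z=U^\sfT RU$ for a \emph{unique} symmetric $R$---is handled exactly as you say, via a right inverse of $U$.
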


An application for the previous proposition is the following example:

\begin{example}\label{example} Let $e_1,\dots,e_r\in\oR^r$ be the standard unit vectors.
The matrix with  Gram representation 
$\{e_i:\, i\in[r]\}\cup\{(e_i+e_j)/\sqrt{2}:\,1\le i<j\le r\}$
is an extreme point of $\EE_n$, since $\UU_V$ is full dimensional in $\SSS^r$, where $n=\binom{r+1}{2}$.
\end{example}

 The next  theorem   shows that every number in the range prescribed in~\eqref{eqdimextreme} corresponds to an extremal element of $\EE_n$.
  
\begin{theorem}\cite{LT}\label{dcfv} For any natural number $r$ satisfying  $\binom{r+1}{2}\le n$ there exists a matrix $X \in \EE_n$  which is an extreme point of $\EE_n$ and has  rank equal to $r$.
\end{theorem}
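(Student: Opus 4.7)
The plan is to extend the construction in Example \ref{example} from the tight case $n_0 := \binom{r+1}{2}$ to arbitrary $n\ge n_0$ by padding with additional unit vectors in $\oR^r$. Specifically, I would first verify that the matrix from Example \ref{example} is indeed extreme by checking the criterion \eqref{eqdimextreme0}. For the unit vectors $u_1,\dots,u_{n_0}$ consisting of $e_1,\dots,e_r$ together with $(e_i+e_j)/\sqrt 2$ for $1\le i<j\le r$, the rank-one matrices $u_k u_k^\sfT$ produce all diagonal matrix units $e_ie_i^\sfT$ and, via the identity
\[
\tfrac12(e_i+e_j)(e_i+e_j)^\sfT-\tfrac12 e_ie_i^\sfT-\tfrac12 e_je_j^\sfT=\tfrac12(e_ie_j^\sfT+e_je_i^\sfT),
\]
all off-diagonal symmetric matrix units $e_ie_j^\sfT+e_je_i^\sfT$. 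Hence $\UU_V=\SSS^r$, which has dimension $\binom{r+1}{2}=n_0$, so the criterion \eqref{eqdimextreme0} is satisfied and the Gram matrix $X_0={\rm Gram}(u_1,\dots,u_{n_0})$ is extreme in $\EE_{n_0}$ with rank exactly $r$ (since $e_1,\dots,e_r$ are among the $u_k$).

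Given an arbitrary $n\ge n_0$, I would then set $u_{n_0+1}=\dots=u_n:=e_1\in\oR^r$ (any choice of unit vectors in $\oR^r$ works) and consider $X:={\rm Gram}(u_1,\dots,u_n)\in\EE_n$. Because $u_1,\dots,u_r=e_1,\dots,e_r$ span $\oR^r$, we have $\rank X=r$. For extremality, observe that the subspace
\[
\UU_V=\lan u_1u_1^\sfT,\dots,u_nu_n^\sfT\ra\subseteq\SSS^r
\]
contains the subspace spanned by the first $n_0$ rank-one matrices, which already equals all of $\SSS^r$ by the previous paragraph. Hence $\dim\UU_V=\binom{r+1}{2}$, and Proposition \ref{propface} (specifically the extremality criterion \eqref{eqdimextreme0}) implies that $X$ is an extreme point of $\EE_n$.

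There is essentially no obstacle in this argument: the key conceptual point is that the extremality condition $\dim\UU_V=\binom{r+1}{2}$ saturates the ambient space $\SSS^r$, so once it is achieved by an initial block of $n_0$ columns it is automatically preserved under appending further unit vectors in $\oR^r$. The only small care is in checking that the specific Gram representation of Example \ref{example} already realizes the full space $\SSS^r$, which is the straightforward linear-algebra computation sketched above.
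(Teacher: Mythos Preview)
Your argument is correct. The paper itself does not give a proof of this theorem---it cites \cite{LT}---but it does supply Example~\ref{example}, which is exactly the tight case $n=\binom{r+1}{2}$ of your construction; your padding step to reach general $n$ is the natural completion, and the extremality criterion~\eqref{eqdimextreme0} applies verbatim since $\UU_V$ already equals $\SSS^r$ before any padding.
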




Next we establish  some tools which will be useful  to study the extreme points of the projected elliptope  $\EE(G)$. 

\begin{lemma}\label{lemextG1}
Consider a partial matrix  $x\in \EE(G)$ and let  $X\in \EE_n$ be a rank $r$ completion of $x$ with Gram representation $\{u_1,\ldots,u_n\}$ in $\oR^r$. Moreover,  let $U$ be the $r\times n$ matrix with columns $u_1,\ldots,u_n$.
Set 
\begin{equation}\label{relU}
U_{ij}={u_iu_j^\sfT+u_ju_i^\sfT\over 2},\ \UU_V=\lan U_{ii}: i\in V\ra,\ \UU_E=\lan U_{ij}: \{i,j\}\in E\ra \subseteq \SSS^r.
\end{equation}
If $x$ is an extreme point of $\EE(G)$, then $\UU_E\subseteq  \UU_V$. 
\end{lemma}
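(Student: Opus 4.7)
The plan is to exploit the explicit description of the perturbation space at $X$ given by Proposition \ref{propface}, and push perturbations of $X$ in $\EE_n$ through the projection $\pi_E$ to obtain perturbations of $x$ in $\EE(G)$. Since $x$ is extreme in $\EE(G)$, its perturbation space is trivial, which will force a vanishing condition that is exactly equivalent to $\UU_E\subseteq \UU_V$.

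More precisely, I would start by invoking \eqref{relpert}: every perturbation of $X$ in $\EE_n$ has the form $Z=U^\sfT R U$ for some symmetric $R\in \SSS^r$ with $\langle R,u_iu_i^\sfT\rangle =0$ for all $i\in V$, i.e.\ $R\in \UU_V^\perp$. Conversely, for every such $R$ the matrix $Z=U^\sfT RU$ is a perturbation of $X$: for small enough $\epsilon>0$, $X\pm\epsilon Z\in \EE_n$. Applying $\pi_E$ to this identity yields $x\pm\epsilon\,\pi_E(Z)\in \EE(G)$, so $\pi_E(Z)$ is a perturbation of $x$ in $\EE(G)$.

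Next I use the hypothesis that $x\in\ext\EE(G)$, which is equivalent to saying that the only perturbation of $x$ in $\EE(G)$ is $0$. Hence $\pi_E(Z)=0$ for every $R\in \UU_V^\perp$, i.e., $Z_{ij}=0$ for every edge $\{i,j\}\in E$. A short computation, using that $R$ is symmetric, gives
\begin{equation*}
Z_{ij}=u_i^\sfT R u_j=\tfrac12\bigl(\langle R,u_iu_j^\sfT\rangle+\langle R,u_ju_i^\sfT\rangle\bigr)=\langle R,U_{ij}\rangle.
\end{equation*}
Therefore, for every $\{i,j\}\in E$ and every $R\in \UU_V^\perp$, we have $\langle R,U_{ij}\rangle=0$. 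This means $U_{ij}\in(\UU_V^\perp)^\perp=\UU_V$ (orthogonality within $\SSS^r$), and taking spans yields $\UU_E\subseteq \UU_V$ as required.

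The only subtlety — not really an obstacle — is verifying that $\pi_E$ sends perturbations of $X$ to perturbations of $x$, which is immediate from $\pi_E(\EE_n)=\EE(G)$ and the linearity of $\pi_E$. The rest is bookkeeping with the Frobenius inner product and the characterization \eqref{relpert} of the perturbation space of $X$ in $\EE_n$.
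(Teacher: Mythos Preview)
Your proof is correct and follows essentially the same approach as the paper: both use the characterization \eqref{relpert} of perturbations of $X$, compute $Z_{ij}=\langle R,U_{ij}\rangle$, and conclude from the extremality of $x$ that these entries must vanish for every $R\in\UU_V^\perp$. The only cosmetic difference is that the paper phrases the argument by contradiction (pick $R\in\UU_V^\perp\setminus\UU_E^\perp$ and exhibit two distinct points of $\EE(G)$ averaging to $x$), whereas you argue directly via $(\UU_V^\perp)^\perp=\UU_V$.
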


\begin{proof}
Assume that  $\UU_E\not\subseteq \UU_V$.
Then there exists  a matrix $R\in  \UU_V^\perp\setminus   \UU_E^\perp$.
As $R\in \UU_V^\perp$, the matrix $Z= U^\sfT RU=(\lan R,U_{ij}\ra)_{i,j=1}^n \in \SSS^n$ is a perturbation of $X$ (recall (\ref{relpert}) and (\ref{relU})).
As $R\not\in \UU_E^\perp$, $Z_{ij}\ne 0$ for some edge $\{i,j\}\in E$.
Now, $X\pm \epsilon Z\in \EE_n$ for some $\epsilon>0$. Hence,
$x$ can be written as the convex combination 
$(\pi_E(X+\epsilon Z) + \pi_E(X-\epsilon Z))/2$, where $\pi_E(X\pm \epsilon Z)$ are distinct points of $\EE(G)$. This contradicts the assumption that $x$ is an extreme point of $\EE(G)$.
\end{proof}

Given $x\in \EE(G)$, its {\em fiber}  is the set of all psd completions of $x$ in $\EE_n$, i.e., 
$$\fib(x)=\{X\in \EE_n: \pi_E(X)=x\}.$$

We close this section with a  simple but  useful  lemma about extreme points of projected elliptopes.

\begin{lemma}\label{lemextG2}
For  a  vector  $x \in \EE(G)$ we have that    
\begin{itemize}
\item[(i)] $x\in \ext \EE(G)\ $ if and only if   $\ \fib(x)$ is a face of $\EE_n$.
\item[(ii)] If $x \in \ext \EE(G)$  then $\ext \fib(x) \subseteq \ext \EE_n$. 
\end{itemize}
\end{lemma}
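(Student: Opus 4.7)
My plan is to handle both parts by leveraging the linearity of the projection $\pi_E$ together with the standard face-containment property (\ref{extremepoints}) recalled in Section \ref{sec:geom}. Part (ii) will follow immediately from part (i), so the real work lies in proving the equivalence in (i).

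For the forward direction of (i), I would assume $x\in\ext\EE(G)$ and verify directly from the definition that $\fib(x)=\pi_E^{-1}(\{x\})\cap\EE_n$ is a face. Take $X\in\fib(x)$ and suppose $X=tY+(1-t)Z$ with $Y,Z\in\EE_n$ and $t\in(0,1)$. Applying the linear map $\pi_E$ gives $x = t\,\pi_E(Y)+(1-t)\,\pi_E(Z)$ with $\pi_E(Y),\pi_E(Z)\in\EE(G)$. Since $x$ is extreme in $\EE(G)$, we must have $\pi_E(Y)=\pi_E(Z)=x$, so $Y,Z\in\fib(x)$.

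For the reverse direction, assume $\fib(x)$ is a face of $\EE_n$ and suppose $x=ty+(1-t)z$ with $y,z\in\EE(G)$ and $t\in(0,1)$. Pick psd completions $Y\in\fib(y)$ and $Z\in\fib(z)$ and set $X=tY+(1-t)Z\in\EE_n$. By linearity $\pi_E(X)=ty+(1-t)z=x$, so $X\in\fib(x)$. The face property then forces $Y,Z\in\fib(x)$, giving $y=\pi_E(Y)=x=\pi_E(Z)=z$, so $x$ is extreme in $\EE(G)$.

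For part (ii), once (i) is established, $\fib(x)$ is a face of $\EE_n$ whenever $x\in\ext\EE(G)$, and then the inclusion $\ext\fib(x)\subseteq\ext\EE_n$ is exactly the general principle recorded in~(\ref{extremepoints}). I do not anticipate any obstacle here: the argument is essentially bookkeeping around the definition of a face and the linearity of $\pi_E$, and no additional structure of the elliptope beyond convexity is required.
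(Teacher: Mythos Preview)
Your proof is correct and follows essentially the same approach as the paper: both establish (i) directly from the definition of a face via the linearity of $\pi_E$, and derive (ii) from (i) together with~(\ref{extremepoints}). You have in fact written out the reverse implication of (i) in more detail than the paper, which dismisses it as ``similar.''
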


\begin{proof}$(i)$ Say $x\in \ext \EE(G)$ and let $\lambda A+(1-\lambda)B \in \fib(x)$, where $A,B \in \EE_n$ and $\lambda \in (0,1)$. Then $x=\lambda \pi_E(A)+(1-\lambda)\pi_E(B) \in \EE(G)$ and since $x \in \ext \EE(G)$ this implies that  $A,B \in \fib(x)$. The other direction is similar.

$(ii)$ The assumption combined with $(i)$ imply that $\fib(x)$ is a face of $\EE_n$ 
 and using \eqref{extremepoints} the claim follows.
\end{proof}

\if 0 
The main motivation for studying this  new graph parameter comes from  the rank constrained Grothendieck constant; cf. Section \ref{edfttgfg}. 
Recall that the rank-$r$  Grothendieck constant is equal to   the integrality gap between two optimization problems: a  semidefinite program with a rank constraint 
\begin{equation}\label{sdpr}  
\max  \sum_{ij \in E} A_{ij}X_{ij} \ \text{ s.t. }  X\in \EE_n, \ \rankspace X \le r,
\end{equation}
and its  semidefinite relaxation where we remove the rank constraint. 

As the objective function is linear, program~\eqref{sdpr}  corresponds to optimization over $\pi_E(\conv (\EE_{n,r})$) 
and the latter to   optimization over $\EE(G)$.
Moreover, problem (\ref{sdpr}) is hard: For $r=1$ it is an $\NP$-hard quadratic problem with $\pm 1$-variables 
and, for any $r\ge 2$, membership in $\pi_E(\conv (\EE_{n,r}))$
is $\NP$-hard; cf. Section \ref{secconvEk}.

It follows from the definitions that a graph has extreme Gram dimension at most $r$ if and only if its rank-$r$ Grothendieck constant is 1:
$$\egd(G)\le r \Longleftrightarrow \kappa(r,G)=1.$$
The graph  parameter $\egd(G)$ is relevant to problem (\ref{sdpr})  since, for a graph $G$ satisfying   $\kappa(r,G)=1$, problem  (\ref{sdpr}) can be solved in polynomial time. For $r=1$ it is known that $\kappa(1,G)= 1 $ if and only if $G$ is a forest~\cite{Lau97}.

\fi 

\section{Properties of the extreme Gram dimension} \label{secprop}

\subsection{Minor-monotonicity and clique sums}\label{secminor}

In this section we  investigate the behavior of the graph parameter $\egd(\cdot)$ under the  graph operations of  taking minors and clique sums.

\begin{lemma}\label{lemminor}
The parameter $\egd(\cdot)$ is minor monotone. That is,  for any edge $e$ of~$G$, 
$$\egd(G\backslash e)\le \egd(G)\  \text{ and } \ \egd (G\slash e)\le \egd (G).$$ 
\end{lemma}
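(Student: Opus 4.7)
The plan is to treat the two graph operations separately, exploiting two different reformulations of $\egd(G)$ given in the introduction.

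For edge deletion, I would use the direct SDP formulation \eqref{p2}. Write $E=E(G)$. Given any $w'\in \oR^{E\setminus\{e\}}$, extend it to $w\in \oR^E$ by setting $w_e=0$. Then $\sdp(G,w)$ and $\sdp(G\backslash e,w')$ have identical objectives and identical feasible regions, so they share the same set of optimal solutions. By definition of $\egd(G)$, $\sdp(G,w)$ admits an optimal solution of rank at most $\egd(G)$, which is therefore also optimal for $\sdp(G\backslash e,w')$, yielding $\egd(G\backslash e)\le \egd(G)$.

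For edge contraction, I would rely on the extreme-point characterization $\egd(G)\le r \iff \ext\EE(G)\subseteq \pi_E(\EE_{n,r})$ from \eqref{eq:inequality}. Assume without loss of generality that $e=\{n-1,n\}$, and let $G'=G\slash e$ have node set $[n-1]$ with the label $n-1$ denoting the merged node and edge set $E'$. Given an extreme point $x'$ of $\EE(G')$, I would construct a lift $x\in \oR^E$ by setting $x_{n-1,n}=1$, $x_{ij}=x'_{ij}$ for edges $\{i,j\}\in E$ with $i,j\le n-2$, and $x_{i,n-1}=x_{i,n}=x'_{i,n-1}$ for each edge of $G$ joining $n-1$ or $n$ to some $i\le n-2$ (noting that $\{i,n-1\}\in E'$ in both cases). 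Given any psd completion $X'\in \EE_{n-1}$ of $x'$, the matrix $X\in \EE_n$ obtained by duplicating row and column $n-1$ of $X'$ is psd, satisfies $\pi_E(X)=x$, and has $\rank X=\rank X'$, so $x\in \EE(G)$.

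The main obstacle is to show that the lifted point $x$ is extreme in $\EE(G)$. Suppose $x=(y+z)/2$ with $y,z\in \EE(G)$. Since entries of correlation matrices lie in $[-1,1]$, the constraint $x_{n-1,n}=1$ forces $y_{n-1,n}=z_{n-1,n}=1$. Then for any psd completion $Y\in \EE_n$ of $y$, the $2\times 2$ principal submatrix of $Y$ indexed by $\{n-1,n\}$ is the all-ones matrix, which forces rows $n-1$ and $n$ of $Y$ to coincide (and analogously for any completion $Z$ of $z$). Consequently, deleting row and column $n$ of $Y$ (resp. $Z$) yields a psd completion in $\EE_{n-1}$ of a well-defined $y'\in \EE(G')$ (resp. $z'\in \EE(G')$) satisfying $x'=(y'+z')/2$; extremality of $x'$ then forces $y'=z'=x'$, hence $y=z=x$. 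To conclude, the characterization \eqref{eq:inequality} applied to $G$ yields a psd completion $X\in \EE_n$ of $x$ with $\rank X\le \egd(G)$; since $X_{n-1,n}=1$, rows $n-1$ and $n$ of $X$ again coincide, so deleting row and column $n$ produces $X'\in \EE_{n-1}$ with $\rank X'\le \egd(G)$ and $\pi_{E'}(X')=x'$, giving $\egd(G\slash e)\le \egd(G)$.
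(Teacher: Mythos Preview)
Your proof is correct. For edge deletion you use the SDP definition directly (extending the weight vector by zero), while the paper instead uses the geometric reformulation of Lemma~\ref{def:egdgeom} (extending a partial matrix by picking a value on the deleted edge); these are interchangeable.

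For contraction the two arguments share the same key trick---forcing the last two columns of any psd completion to coincide via the constraint on the $(n-1,n)$ entry---but organize the proof differently. The paper works with the characterization $\EE(G)=\pi_E(\conv(\EE_{n,r}))$: it lifts an \emph{arbitrary} $x\in\EE(G/e)$ to $y\in\EE(G)$, writes $y=\pi_E(\sum_i\lambda_i Y_i)$ with each $Y_i\in\EE_{n,r}$, and then pushes the column-equality argument through \emph{each} summand $Y_i$ separately (using that the convex combination $\sum_i\lambda_i (Y_i)_{n-1,n}=1$ forces every $(Y_i)_{n-1,n}=1$). You instead use the extreme-point characterization $\ext\EE(G)\subseteq\pi_E(\EE_{n,r})$: you lift only extreme points $x'$, but must then verify that the lift $x$ is again extreme in $\EE(G)$---which you do correctly, again via the column-equality trick applied to completions of any hypothetical $y,z$ in a convex decomposition of $x$. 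Your route trades the bookkeeping of convex combinations for an extremality argument; the paper's route does the opposite. Both are clean, and neither is obviously shorter.
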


\begin{proof}Consider a graph  $G=([n],E)$ and an edge $e\in E$, and set  $r=\egd(G)$.   We begin by showing  that   $\egd(G\setminus e)\le r$. Using Lemma \ref{def:egdgeom},  it suffices to show   that $\EE(G\setminus e)\subseteq \pi_{E\setminus e}(\conv(\EE_{n,r}))$. For this, let $x \in \EE(G\setminus e)$ and choose a scalar  $x_e~\in [-1,1]$ such that $(x,x_e)\in \EE(G)$.  Since $\egd(G)=r$ it follows that $(x,x_e)~\in \pi_{E}(\conv(\EE_{n,r}))$ and thus $x \in \pi_{E\setminus e}(\conv(\EE_{n,r}))$.

We now show that $\egd(G/e)\le r$. Say,  $e$ is the edge $(n-1,n)$ and set $G/e=([n-1],E')$. By Lemma  \ref{def:egdgeom}, it suffices to show  $\EE(G/e) \subseteq \pi_{E'}(\conv(\EE_{n-1,r}))$.  For this, let  $x \in \EE(G/e)$; we show that $x\in \pi_{E'}(\conv(\EE_{n-1,r}))$.
As $x$ belongs to $\EE(G/e)$, it follows that   $x=\pi_{E^{'}}(X)$ for some matrix  $X \in \EE_{n-1}$.
Say,    $X={\rm Gram}(p_1,\ldots,p_{n-1})$ for some vectors $p_1,\ldots,p_{n-1}$.
Let $X[\cdot,n-1]$ denote  the last column of $X$ and define the new matrix 
  $$Y=\left(\begin{matrix}
X & X[\cdot,n-1]\\
X[\cdot,n-1]^\sfT & 1
\end{matrix}\right)\in \SSS^n.$$ 
Then,
  $Y_{n-1,n}=X_{n-1,n-1}=1$ holds.
Moreover,  $Y={\rm Gram}(p_1,\ldots,p_{n-1},p_{n-1})$, which shows that $Y\in \EE_n$. Therefore, the projected vector  $y=\pi_{E}(Y)$ belongs to $ \EE(G)$ and its $(n-1,n)$-coordinate  satisfies: $y_{n-1,n}=Y_{n-1,n}=1$.
As $y\in~\EE(G)$ with   $\egd(G)=r$, it follows from Lemma \ref{def:egdgeom} that  there exist matrices $Y_1,\ldots,Y_m \in \EE_{n,r}$ and scalars $\lambda_i > 0$ with $\sum_{i=1}^m\lambda_i=1$ satisfying  
 \begin{equation}\label{werchrist}
 y=\pi_E(\sum_{i=1}^m \lambda_iY_i).
 \end{equation}
 Combining with $y_{n-1,n}=1$, this implies:
 \begin{equation}\label{gamiseme}
 1=\sum_{i=1}^m \lambda_i (Y_i)_{n-1,n}.
 \end{equation}
  Since the matrices $Y_i \ (i \in [m])$ are psd with diagonal entries equal to 1, all their  entries are bounded in absolute value by 1. Moreover, as $\lambda_i \in (0,1]$ for every $i\in [m]$,  \eqref{gamiseme} implies that $(Y_i)_{n-1,n}=1$ and thus 
  $$Y_i[\{n-1,n\}]=\left(\begin{matrix} 1 & 1 \cr 1 & 1 \end{matrix}\right).$$
  Therefore, the vector $(1,-1)$ lies in the kernel of $Y_i[\{n-1,n\}]$.
  Using (\ref{relker}), we can conclude that  the last two columns of $Y_i$ indexed by $n-1$ and by $n$ are equal.
 
 For $i\in [m]$, let   $X_i$ be  the matrix   obtained from $Y_i$ by removing its $n$-th row and its $n$-th column.  Since $X_i$ is a submatrix of $Y_i$ we have that  $\rankspace X_i \le \rankspace Y_i\le r.$ Set $X=\sum_{i=1}^m \lambda_iX_i$ and notice that, by construction, it belongs to $\conv (\EE_{n-1,r})$.   Moreover, since ${Y_i}[\cdot ,n-1]={Y_i}[\cdot,n]$ for all $ i \in [m]$,  it follows that $x=\pi_{E^{'}}(X)$. Lastly, since $X \in \conv (\EE_{n-1,r})$,  it follows  that $x \in   \pi_{E'}(\conv (\EE_{n-1,r}))$. This concludes the proof that    $\egd(G\slash e)\le r$.
\end{proof}

We now recall a well known  useful fact concerning completions of psd matrices. We include a short proof for completeness. 
\begin{lemma}\label{lem:cliquepsd} Consider two psd matrices $X_i$ indexed respectively by $V_i$ for $i=1,2$. Assume that  $X_1[V_1\cap V_2]=X_2[V_1\cap V_2]$. Then $X_1$ and $X_2$  admit a common psd completion $X$ indexed by $V_1\cup V_2$ with  $\rank X=
\max\{\rankspace (X_1),\rankspace (X_2)\}$. 
\end{lemma}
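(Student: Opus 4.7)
My plan is to build the completion through compatible Gram representations. Set $r=\max\{\rank X_1,\rank X_2\}$ and $W=V_1\cap V_2$. Using (\ref{gram}), pick Gram representations $\{u_i:i\in V_1\}$ of $X_1$ and $\{v_i:i\in V_2\}$ of $X_2$, both living in $\oR^r$ (pad with zeros if the natural representing space has smaller dimension). The hypothesis $X_1[W]=X_2[W]$ then says that the two families $\{u_i:i\in W\}$ and $\{v_i:i\in W\}$ in $\oR^r$ have the same Gram matrix.

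The key step is to align these two families by a single rigid motion fixing the origin. A standard fact (two finite point configurations in Euclidean space with identical Gram matrices are related by an orthogonal transformation) gives an orthogonal matrix $Q\in O(r)$ with $Qv_i=u_i$ for every $i\in W$. Replacing each $v_j$ by $Qv_j$ for $j\in V_2$ does not change the Gram matrix $X_2$, so we may assume outright that $u_i=v_i$ for all $i\in W$. Now define vectors $w_i\in\oR^r$ indexed by $V_1\cup V_2$ by $w_i=u_i$ if $i\in V_1$ and $w_i=v_i$ if $i\in V_2\setminus V_1$ (the two rules agree on $W$), and set
\[
X=\gram(w_i:i\in V_1\cup V_2).
\]

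By construction $X\psd 0$, $X[V_1]=X_1$ and $X[V_2]=X_2$, so $X$ is the required common completion. Its rank equals $\dim\lan w_i:i\in V_1\cup V_2\ra\le r$ because all the $w_i$ live in $\oR^r$, and conversely $\rank X\ge\max\{\rank X_1,\rank X_2\}=r$ since $X_1,X_2$ are principal submatrices of $X$. Hence $\rank X=r$, as required. The only nontrivial point is the alignment step; it is standard but worth spelling out by restricting attention to the subspaces spanned by $\{u_i:i\in W\}$ and $\{v_i:i\in W\}$, where the two configurations are related by an isometry, and then extending that isometry arbitrarily to an element of $O(r)$.
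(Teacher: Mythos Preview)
Your proof is correct and follows essentially the same route as the paper's: choose Gram representations of $X_1$ and $X_2$ in $\oR^r$ with $r=\max\{\rank X_1,\rank X_2\}$, align them on $V_1\cap V_2$ by an orthogonal transformation, and take the Gram matrix of the merged family. You are slightly more careful than the paper in justifying that the rank is exactly $r$ (the paper only states $\rank X\le r$, leaving the lower bound implicit from the principal-submatrix inequality).
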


\begin{proof}
Set $r=\max\{\rankspace (X_1),\rankspace (X_2)\}$. Let $u^{(i)}_j$ ($j\in V_i$) be a Gram representation of $X_i$ (for $i=1,2$) and assume   without loss of generality that the two families of vectors  lie in the same space $\oR^r$. Then, there exists an orthogonal $r\times r$ matrix $Q$  mapping $u^{(1)}_j$ to $u^{(2)}_j$ for $j\in V_1\cap V_2$. Clearly, the  Gram matrix of the vectors  of $Qu^{(1)}_j$ ($j\in V_1$) together with $u^{(2)}_j$ ($j\in V_2\setminus V_1$) is a common psd completion with rank at most $r$.
\end{proof}

As a direct application of the above lemma, we obtain the followin result of \cite{LV12}: if $G$ is the clique sum of $G_1$ and $G_2$,  then its  Gram dimension satisfies: 
  $\gd(G)=\max\{\gd(G_1),\gd(G_2)\}$.
For the extreme Gram dimension, the analogous  result holds only for clique $k$-sums with $k\le 1$.

\begin{lemma}\label{lemcliquesum}
Let $G_1=(V_1,E_1)$ and $G_2=(V_2,E_2)$ be graphs.
If $|V_1\cap V_2|\le 1$ then the clique sum $G$ of $G_1,G_2$ satisfies $\egd(G)=\max\{\egd(G_1),\egd(G_2)\}.$
\end{lemma}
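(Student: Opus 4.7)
The plan is to establish the two inequalities separately.

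For the lower bound $\egd(G)\ge \max\{\egd(G_1),\egd(G_2)\}$, I will invoke minor-monotonicity (Lemma~\ref{lemminor}). Indeed, when $|V_1\cap V_2|\le 1$, each $G_i$ is obtained from $G$ by deleting the vertices of $V_{3-i}\setminus V_i$, so $G_i$ is an induced subgraph (in particular a minor) of $G$.

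For the upper bound $\egd(G)\le r:=\max\{\egd(G_1),\egd(G_2)\}$, I will use the geometric reformulation of Lemma~\ref{def:egdgeom} and show that any $x\in\EE(G)$ lies in $\pi_E(\conv(\EE_{n,r}))$, where $n=|V_1\cup V_2|$. The first key observation is that, since $|V_1\cap V_2|\le 1$, the edge set $E$ is the disjoint union $E_1\sqcup E_2$ with no edges running between $V_1\setminus V_2$ and $V_2\setminus V_1$, so $x$ decomposes canonically as $x=(x^{(1)},x^{(2)})$ with $x^{(i)}=\pi_{E_i}(x)\in\EE(G_i)$. Applying the hypothesis $\egd(G_i)\le r$ to each piece, we get expressions
\begin{equation*}
x^{(i)}=\pi_{E_i}\!\Bigl(\textstyle\sum_{j}\lambda^{(i)}_j X^{(i)}_j\Bigr),\quad X^{(i)}_j\in\EE_{|V_i|,r},\ \lambda^{(i)}_j\ge0,\ \sum_j\lambda^{(i)}_j=1.
\end{equation*}

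The second step is to glue each pair $(X^{(1)}_{j_1},X^{(2)}_{j_2})$ into a single matrix $Y_{j_1,j_2}\in\EE_{n,r}$. Here the assumption $|V_1\cap V_2|\le 1$ is crucial: the two matrices automatically agree on the common index set (either there is no common index, or the single shared diagonal entry equals $1$ for both), so Lemma~\ref{lem:cliquepsd} produces a psd completion $Y_{j_1,j_2}$ of rank $\max\{\rank X^{(1)}_{j_1},\rank X^{(2)}_{j_2}\}\le r$; its diagonal is all ones, hence $Y_{j_1,j_2}\in\EE_{n,r}$. Setting
\begin{equation*}
Y=\sum_{j_1,j_2}\lambda^{(1)}_{j_1}\lambda^{(2)}_{j_2}\,Y_{j_1,j_2}\in\conv(\EE_{n,r}),
\end{equation*}
a short computation using $\pi_{E_i}(Y_{j_1,j_2})=\pi_{E_i}(X^{(i)}_{j_i})$ and $\sum_{j_{3-i}}\lambda^{(3-i)}_{j_{3-i}}=1$ gives $\pi_E(Y)=x$, completing the proof.

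The only real subtlety is the gluing step, since convex combinations interact badly with rank constraints: one cannot simply combine the convex decompositions of $x^{(1)}$ and $x^{(2)}$ first and then apply \ref{lem:cliquepsd} once. Instead, the gluing must be performed at the level of each pair of \emph{extremal rank-$r$ summands} before the double convex combination is taken. This is precisely where $|V_1\cap V_2|\le 1$ matters; for clique sums of higher order, the matrices $X^{(1)}_{j_1}$ and $X^{(2)}_{j_2}$ would need to agree on a nontrivial shared principal submatrix, which need not hold.
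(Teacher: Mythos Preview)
Your proof is correct and follows essentially the same approach as the paper: restrict $x$ to each $E_i$, use $\egd(G_i)\le r$ to write each piece as a projection of a convex combination of rank-$\le r$ correlation matrices, glue each pair via Lemma~\ref{lem:cliquepsd} (which applies because the overlap is at most a single diagonal entry), and take the product convex combination. The paper omits the lower bound (which is immediate from minor-monotonicity, as you note) and your extra commentary on why the gluing must happen at the level of extremal summands is a helpful clarification, but the core argument is the same.
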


\begin{proof}Let $x\in \EE(G)$ and set $r=\max\{\egd(G_1),\egd(G_2)\}$. We will  show that $x\in \pi_{E}(\conv (\EE_{n,r}))$. 
For $i=1,2$, the vector  $x_i=\pi_{E_i}(x)$ belongs to $\pi_{E_i}(\conv(\EE_{|V_i|,r}))$. Hence,
$x_i=\pi_{E_i}(\sum_{j=1}^{m_i} \lambda_{i,j} X^{i,j})$ for some  $X^{i,j}\in \EE_{|V_i|,r}$ and $\lambda_{i,j}\ge 0$ with $\sum_j \lambda_{i,j}=1$.
As $|V_1\cap V_2|\le 1$, any two matrices $X^{1,j}$ and $X^{2,k}$ share at most one diagonal entry, equal to 1 in both matrices.
 By Lemma~\ref{lem:cliquepsd}, $X^{1,j}$ and $X^{2,k}$ have a common completion $Y^{j,k}\in \EE_{n,r}$.
This implies that $x=\pi_E(\sum_{j=1}^{m_1}\sum_{k=1}^{m_2} \lambda_{1,j}\lambda_{2,k} Y^{jk})$,  which shows 
$x\in\pi_{E}(\conv (\EE_{n,r}))$.
\end{proof}

Throughout this paper  we denote by $\GG_r$ the class of graphs having extreme Gram dimension at most $r$. By  Lemma \ref{lemminor} and Lemma \ref{lemcliquesum}  the class $\GG_r$ is closed under taking disjoint unions  and clique 1-sums of graphs. Nevertheless, it is {\em not} closed under clique  $k$-sums when $k\ge 2$.  For example,  the graph $F_3$ seen in   Figure~\ref{fig:frminors}   is a clique 2-sum of triangles, however $\egd(F_3)=3$ (Theorem \ref{theoFr}) while  triangles  have extreme Gram dimension 2  (Lemma~\ref{lemrankr}).  


\subsection{Upper and lower bounds}\label{secbound}

From Lemma \ref{eq:reforegd}  we know that, for any graph $G$,
\begin{equation*}
 \egd(G)=\max_{x \in \ext\EE(G)}\gd(G,x).
 \end{equation*}
According to this characterization,  in order to show that $\egd(G)\le r$,  it suffices to show  that every partial matrix  $x \in \ext \EE(G)$ has a psd completion of rank at most $r$. Using  results from Section \ref{sec:geom}  we obtain the following:



\begin{lemma}\label{lemrankr}
The extreme Gram dimension of the complete graph $K_n$ is
\begin{equation}\label{eq:boundcomplete}
\egd(K_n) =\max\left\{r\in\mathbb{Z}_+: {r+1\choose 2}\le n\right\}= \left\lfloor {\sqrt{8n+1}-1\over 2}\right\rfloor.
\end{equation}
Hence,  for any graph $G$ on $n$ nodes we have that:
\begin{equation}\label{eq:upperbound}
\egd(G)\le \max\left\{r\in\mathbb{Z}_+: {r+1\choose 2}\le n\right\}= \left\lfloor {\sqrt{8n+1}-1\over 2}\right\rfloor.
\end{equation}
\end{lemma}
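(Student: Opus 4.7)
The plan is to reduce everything to the case $G = K_n$ and then invoke the structural facts about extreme points of $\EE_n$ already recalled in Proposition \ref{propface} and Theorem \ref{dcfv}.

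First I would handle $G = K_n$. Since every pair is an edge of $K_n$, the projection $\pi_E$ is the identity, so $\EE(K_n) = \EE_n$ and $\ext \EE(K_n) = \ext \EE_n$. Moreover, for a matrix $X\in \EE_n$, the parameter $\gd(K_n, X)$ is exactly $\rank X$. Combining with Lemma \ref{eq:reforegd} we get
\begin{equation*}
\egd(K_n) = \max_{X \in \ext \EE_n} \rank X.
\end{equation*}
Now the upper bound $\rank X \le \max\{r : \binom{r+1}{2} \le n\}$ for every $X\in\ext\EE_n$ is precisely the content of \eqref{eqdimextreme} in Proposition \ref{propface}, while the matching lower bound, namely the existence of an extreme point of $\EE_n$ of rank exactly $r$ whenever $\binom{r+1}{2}\le n$, is Theorem \ref{dcfv}. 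Together these yield the first equality in \eqref{eq:boundcomplete}.

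Next I would derive the general upper bound \eqref{eq:upperbound}. Any graph $G$ on $n$ nodes is a spanning subgraph of $K_n$, so $G$ is obtained from $K_n$ by successive edge deletions. By Lemma \ref{lemminor} the parameter $\egd(\cdot)$ is minor monotone, hence $\egd(G) \le \egd(K_n)$, and \eqref{eq:upperbound} follows from the $K_n$ case.

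Finally the closed-form identity $\max\{r \in \mathbb{Z}_+ : \binom{r+1}{2} \le n\} = \lfloor (\sqrt{8n+1}-1)/2\rfloor$ is elementary: the inequality $r(r+1)/2 \le n$ is equivalent to $r^2 + r - 2n \le 0$, whose positive root is $(\sqrt{8n+1}-1)/2$, so the largest integer solution is the claimed floor. There is no real obstacle here; the only subtle point is that one must know both halves of Proposition \ref{propface}/Theorem \ref{dcfv}, i.e., that the bound $\binom{r+1}{2}\le n$ is not only necessary but also attained, which is what allows us to conclude equality (rather than merely an inequality) in \eqref{eq:boundcomplete}.
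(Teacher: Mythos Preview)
Your proof is correct and follows essentially the same route as the paper: identify $\EE(K_n)$ with $\EE_n$ (so that $\gd(K_n,X)=\rank X$), use \eqref{eqdimextreme} for the upper bound and Theorem~\ref{dcfv} for the matching lower bound, then invoke minor monotonicity (Lemma~\ref{lemminor}) for general $G$. The only difference is cosmetic: you make the use of Lemma~\ref{eq:reforegd} explicit and spell out the quadratic-formula computation for the floor expression, whereas the paper leaves these implicit.
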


\begin{proof}Notice that  $\EE(K_n)$ is the bijective image of $\EE_n$ in $\oR^{\binom{n}{2}}$, obtained by considering only the upper triangular part of matrices in $\EE_n$. Then, for any $X \in~\ext \EE_n$ with $\rankspace X=r$ we know that  $\binom{r+1}{2} \le n$ (recall \eqref{eqdimextreme}). Moreover, from Theorem \ref{dcfv} we know that for any natural number $r$ satisfying $\binom{r+1}{2}\le n$ there exists an extreme point of $\EE_n$ with rank equal to $r$.  The second part follows from \eqref{eq:boundcomplete} using  the fact that $\egd(\cdot)$ is minor monotone (Lemma~\ref{lemminor}).
\end{proof}

The following lemma is a direct consequence  of \eqref{eq:upperbound}. 

\begin{lemma}\label{lem:uboundegd} Consider a graph $G$ with  $|V(G)|=\binom{r+1}{2}$. Then  $\egd(G) \le r$. 
\end{lemma}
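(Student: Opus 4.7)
The plan is to apply Lemma \ref{lemrankr} directly with $n = |V(G)| = \binom{r+1}{2}$. By the upper bound \eqref{eq:upperbound}, we have
\[
\egd(G) \le \max\left\{s \in \mathbb{Z}_+ : \binom{s+1}{2} \le n\right\},
\]
so the task reduces to showing that this maximum equals $r$ in our setting.

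First I would verify that $r$ lies in the set on the right-hand side: this is immediate because $\binom{r+1}{2} = n \le n$. Next I would check that $r+1$ does not lie in the set, using the identity
\[
\binom{r+2}{2} = \binom{r+1}{2} + (r+1) = n + (r+1) > n,
\]
which rules out $r+1$ and, by monotonicity of $\binom{s+1}{2}$ in $s$, every larger integer. Hence the maximum is exactly $r$, and combining with the upper bound gives $\egd(G) \le r$, as desired.

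There is no real obstacle here; the statement is a specialization of Lemma \ref{lemrankr} in which the cardinality $n$ is chosen to be the triangular number $\binom{r+1}{2}$, so the floor/maximum on the right of \eqref{eq:upperbound} collapses to $r$. The only thing one must be slightly careful about is the direction of the inequality inside the set defining the maximum (it is $\le n$, not $< n$), which is precisely what allows $r$ itself to be admissible when $n = \binom{r+1}{2}$.
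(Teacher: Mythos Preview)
Your proposal is correct and follows exactly the paper's approach: the paper states that the lemma is a direct consequence of \eqref{eq:upperbound}, and you simply spell out why the maximum on the right-hand side equals $r$ when $n=\binom{r+1}{2}$.
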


On the other hand, in order to obtain a lower bound  $\egd(G)\ge r$, 
we need  an extreme point of $\EE(G)$,  all of whose positive semidefinite completions have rank at least $r$. 
This poses two difficulties: how to construct a suitable  extreme point of $\EE(G)$ and then,  given an extreme point of   $ \EE(G)$, how to verify that {\em all} its positive semidefinite completions have rank at least $r$. 
We resolve this by using the construction of extreme points from Lemma \ref{lemextG2}. Indeed, if we can find a point $x\in \EE(G)$ admitting a {\em unique} completion $X\in \EE_n$ which is an extreme point of $\EE_n$ and has rank $r$, then we can conclude that $x$ is an extreme point of $ \EE(G)$ which has Gram dimension  $r$, thus showing that  $\egd(G)\ge r$.
We summarize this in the following lemma  for further reference.

\begin{lemma}\label{lem:basic} 
Assume  that there exists  $x\in \EE(G)$ which has a unique completion $X\in \EE_n$. Assume moreover that  $X$ is an extreme point of $\EE_n$ and that $X$ has rank $r$. Then, $\egd(G)\ge r$.
\end{lemma}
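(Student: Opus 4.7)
The plan is to combine the reformulation $\egd(G)=\max_{x\in \ext \EE(G)} \gd(G,x)$ from Lemma \ref{eq:reforegd} with the fiber characterization of extreme points from Lemma \ref{lemextG2}(i). It suffices to exhibit a point in $\ext \EE(G)$ whose Gram dimension is at least $r$, and the obvious candidate is precisely the point $x$ we are given.

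First I would observe that the uniqueness assumption means $\fib(x)=\{X\}$, i.e., the fiber is a single point. Since $X$ is by hypothesis an extreme point of $\EE_n$, the singleton $\{X\}$ is itself a face of $\EE_n$: any convex combination $\lambda Y + (1-\lambda)Z = X$ with $Y,Z\in \EE_n$ and $\lambda\in(0,1)$ forces $Y=Z=X$ by extremality, so $\{X\}$ satisfies the definition of a face. Thus $\fib(x)$ is a face of $\EE_n$.

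Applying Lemma \ref{lemextG2}(i) in the direction ``$\fib(x)$ is a face $\Rightarrow$ $x\in \ext \EE(G)$'', I conclude that $x$ is an extreme point of the projected elliptope $\EE(G)$. On the other hand, since $X$ is the unique element of $\fib(x)$, every correlation matrix completion of $x$ is equal to $X$, and so $\gd(G,x)=\rank X=r$ by the definition of the Gram dimension of a partial matrix.

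Combining these two facts with Lemma \ref{eq:reforegd} yields
\[
\egd(G)=\max_{y\in\ext\EE(G)} \gd(G,y) \ \ge\ \gd(G,x)\ =\ r,
\]
as required. There is essentially no obstacle here beyond correctly invoking the two preceding lemmas; the only subtlety worth highlighting is that both hypotheses on $X$ are needed: uniqueness of the completion reduces $\fib(x)$ to a singleton, while extremality of $X$ in $\EE_n$ is what makes that singleton a face. Without either assumption the argument via Lemma \ref{lemextG2}(i) would fail.
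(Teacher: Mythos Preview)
Your proof is correct and follows essentially the same approach as the paper: show that $\fib(x)=\{X\}$ is a face of $\EE_n$ (using extremality of $X$), invoke Lemma~\ref{lemextG2}(i) to get $x\in\ext\EE(G)$, and then conclude $\egd(G)\ge r$. You spell out the final step via Lemma~\ref{eq:reforegd} more explicitly than the paper does, but the argument is the same.
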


\begin{proof}As $\fib(x)=\{X\}$ and $X \in \ext \EE_n$, it follows that  $\fib(x)$  is a face of $\EE_n$ and then Lemma~\ref{lemextG2} implies that $x \in \ext \EE(G)$.
\end{proof}

\subsection{The strong largeur d'arborescence}\label{secupperbound}



In this section we introduce a new treewidth-like  parameter that will serve as   an upper bound for the extreme Gram dimension. 

\begin{definition}\label{def:strongla} The {\em strong largeur d'arborescence} of \index{strong largeur d'arborescence} a graph $G$, denoted by $\sla(G)$, is the smallest integer $k\ge 1$ for which $G$ is a minor of $T\stp K_k$  for some tree $T$.
\end{definition}
Notice the analogy with the largeur d'arborescence where  the Cartesian product has been substituted  with the strong graph product. 
It is clear from its definition that the parameter $\sla(\cdot)$ is minor monotone. Moreover, \index{strong largeur d'arborescence}
 
\begin{lemma}\label{lem:bound}For any graph $G$ we have that 

$$ \frac{\tw(G)+1}{2} \le \sla(G)\le \lda(G).$$
\end{lemma}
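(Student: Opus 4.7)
The plan is to prove the two inequalities separately by elementary arguments.

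For the upper bound $\sla(G)\le \lda(G)$, I would observe that $T\Box K_k$ is a spanning subgraph of $T\stp K_k$ for any tree $T$ and any integer $k\ge 1$. Both graphs share the vertex set $V(T)\times[k]$, and every edge of the Cartesian product is also an edge of the strong product; the strong product merely adds the ``diagonal'' edges of the form $(u,i)(v,j)$ with $uv\in E(T)$ and $i\ne j$. Consequently, if $G$ is a minor of $T\Box K_k$ then $G$ is a minor of $T\stp K_k$, which immediately gives $\sla(G)\le \lda(G)$.

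For the lower bound $\tfrac{\tw(G)+1}{2}\le \sla(G)$, I would use the fact that $\tw(\cdot)$ is minor monotone, so it suffices to establish the inequality $\tw(T\stp K_k)\le 2k-1$ for every tree $T$ and every $k\ge 1$. Applied with $k=\sla(G)$ and a tree $T$ witnessing $G\preceq T\stp K_k$, this will yield $\tw(G)\le 2\sla(G)-1$, which rearranges to the claimed bound. To control the treewidth of $T\stp K_k$ I would exhibit an explicit tree decomposition: root $T$ at an arbitrary node $r$, and for each non-root node $v$ with parent $p(v)$ set $B_v=(\{v\}\cup\{p(v)\})\times[k]$, while $B_r=\{r\}\times[k]$. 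The underlying tree of the decomposition is $T$ itself, with $B_v$ placed at node $v$. Every bag has size at most $2k$, so the width is at most $2k-1$.

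The three tree-decomposition axioms can be verified routinely: every vertex $(v,i)$ lies in $B_v$; every edge of $T\stp K_k$ of the form $(v,i)(v,j)$ lies in $B_v$, while every edge of the form $(u,i)(v,j)$ with $uv\in E(T)$ lies in the bag attached to whichever of $u,v$ is a child of the other in the rooted tree; and for each fixed vertex $(v,i)$, the bags containing it are precisely $B_v$ together with $B_w$ for $w$ a child of $v$, which form a star around $v$ in $T$ and hence induce a connected subtree. No serious obstacle arises: the upper bound is immediate from the edge-set inclusion, and the lower bound reduces to the explicit construction above. The only mildly delicate point is verifying the connectivity axiom for the decomposition, which follows because the bags sharing $(v,i)$ correspond to edges of $T$ incident to $v$.
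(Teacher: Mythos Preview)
Your proof is correct and follows essentially the same strategy as the paper: the upper bound comes from the inclusion $T\Box K_k\subseteq T\stp K_k$, and the lower bound from minor monotonicity of treewidth together with $\tw(T\stp K_k)\le 2k-1$. The only difference is cosmetic: the paper obtains this last bound by noting that $T\stp K_k$ is a clique $k$-sum of copies of $K_2\stp K_k=K_{2k}$ and invoking \eqref{tr:csum}, whereas you write out the corresponding tree decomposition explicitly.
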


\begin{proof}The rightmost inequality follows directly from the definitions. For the leftmost inequality assume that $\sla(G)=k$, i.e., $G$ is minor of $T\stp K_k$ for some tree $T$. Notice that the graph $T\stp K_k$ can be obtained by taking clique  $k$-sums of copies of the graph $K_2\stp K_k$. 
By \eqref{tr:csum}, $\tw(T \stp K_k) =\tw(K_2\stp K_k)=2k-1$. Combining this with the fact that  the treewidth  is  minor-monotone, we obtain that $\tw(G)\le \tw(T\stp K_k)=2k-1$ and   the claim follows. 
\end{proof}

Our main goal in this  section is to  show that the extreme Gram dimension is upper bounded by the strong largeur d'arborescence: $\egd(G)\le \sla(G)$ for any graph $G$.  As we will see in later sections, this property 
will play  a crucial role in characterizing graphs with extreme Gram dimension at most 2.
We start with a technical  lemma which we need for the proof of Theorem \ref{theomainr} below.

\begin{lemma}\label{lemr}
Let   $\{u_1,\ldots, u_{2r}\}$ be a set of vectors, denote its rank by $\rho$.
Let $\UU$ denote the linear span of the matrices  $U_{ij}=(u_iu_j^\sfT+u_ju_i^\sfT)/2$ for all $i,j\in \{1,\ldots,r\}$ and all $i,j\in \{r+1,\ldots, 2r\}$.
If $\rho\ge r+1$ then $\dim \UU <{\rho+1\choose 2}$.
\end{lemma}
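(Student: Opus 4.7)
The plan is to split the vectors into the two natural halves and analyse $\UU$ as the sum of the associated ``symmetric square'' subspaces of $\SSS^{\rho}$. Let
$V_1=\langle u_1,\dots,u_r\rangle$, $V_2=\langle u_{r+1},\dots,u_{2r}\rangle$, with $\rho_1=\dim V_1$, $\rho_2=\dim V_2$, and $d=\dim(V_1\cap V_2)$, so that $\rho_1,\rho_2\le r$ and $\rho=\rho_1+\rho_2-d$. Set
$\UU_1=\langle U_{ij}:i,j\in[r]\rangle$ and $\UU_2=\langle U_{ij}:i,j\in\{r+1,\ldots,2r\}\rangle$, so $\UU=\UU_1+\UU_2$.

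The first step is to identify $\UU_1$ and $\UU_2$ as symmetric squares. Working in coordinates inside $\oR^\rho$, any matrix of the form $\sum \alpha_{ij}U_{ij}$ with $i,j\in[r]$ has image contained in $V_1$ and kernel containing $V_1^\perp$; moreover the family $\{u_iu_j^\sfT+u_ju_i^\sfT\}_{i,j\in[r]}$ spans the full space of symmetric matrices supported on $V_1$. Hence $\UU_1$ is naturally isomorphic to $\mathrm{Sym}^2(V_1)$ and $\dim\UU_1={\rho_1+1\choose 2}$; similarly $\dim\UU_2={\rho_2+1\choose 2}$. The second step is to pin down the intersection: a matrix in $\UU_1\cap\UU_2$ has image in $V_1\cap V_2$ and kernel containing $V_1^\perp+V_2^\perp=(V_1\cap V_2)^\perp$, so it is supported on $V_1\cap V_2$, and conversely every matrix in $\mathrm{Sym}^2(V_1\cap V_2)$ lies in both $\UU_1$ and $\UU_2$. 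Therefore
\[
\dim(\UU_1\cap\UU_2)={d+1\choose 2}.
\]

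The third step is the dimension count. Inclusion--exclusion yields
\[
\dim\UU={\rho_1+1\choose 2}+{\rho_2+1\choose 2}-{d+1\choose 2},
\]
and a direct algebraic manipulation, using $\rho=\rho_1+\rho_2-d$, gives
\[
{\rho+1\choose 2}-\dim\UU=(\rho_1-d)(\rho_2-d).
\]
Thus it only remains to check that both factors are strictly positive. Since $d\le\min(\rho_1,\rho_2)$, both are $\ge 0$; if $d=\rho_1$ then $V_1\subseteq V_2$, giving $\rho=\rho_2\le r$, which contradicts $\rho\ge r+1$, and symmetrically $d<\rho_2$. Hence $(\rho_1-d)(\rho_2-d)\ge 1$, proving $\dim\UU<{\rho+1\choose 2}$.

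The main obstacle is the identification of $\UU_1\cap \UU_2$ with $\mathrm{Sym}^2(V_1\cap V_2)$: one has to make sure that no ``accidental'' cancellations produce additional elements of the intersection when $V_1$ and $V_2$ overlap nontrivially. Everything else is either a straightforward symmetric-tensor dimension count or the elementary algebraic identity that rewrites the difference of binomial coefficients as the product $(\rho_1-d)(\rho_2-d)$.
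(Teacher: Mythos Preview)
Your proof is correct and takes a genuinely different route from the paper's. The paper argues by explicitly constructing a spanning set $\WW$ of $\UU$: it picks a basis $\{u_i:i\in I\cup J\}$ with $I\subseteq[r]$ and $J\subseteq\{r+1,\ldots,2r\}$, expresses the remaining vectors as linear combinations, and then writes down a concrete list of matrices of size ${\rho+1\choose 2}+r-\rho\le{\rho+1\choose 2}-1$ that spans $\UU$. This is a direct but somewhat ad hoc calculation that only yields an upper bound on $\dim\UU$.

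Your approach is cleaner and more structural: by identifying $\UU_k$ with the space of symmetric matrices supported on $V_k$ (which is indeed the content of Lemma~\ref{sdrtghui}), the intersection $\UU_1\cap\UU_2$ is \emph{exactly} the symmetric matrices supported on $V_1\cap V_2$, and inclusion--exclusion gives the exact formula $\dim\UU={\rho_1+1\choose 2}+{\rho_2+1\choose 2}-{d+1\choose 2}$. The identity ${\rho+1\choose 2}-\dim\UU=(\rho_1-d)(\rho_2-d)$ then makes the strict inequality transparent, and in fact shows more: the gap equals the product of the ``new'' dimensions contributed by each half. Your worry about ``accidental cancellations'' in the intersection is unfounded, since the characterization of $\UU_k$ via range and kernel is coordinate-free and leaves no room for such accidents.
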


\begin{proof}Let $I\subseteq \{1,\ldots, r\}$ for which $\{u_i: i\in I\}$ is a maximum linearly independent subset of $\{u_1,\ldots, u_r\}$ and let $J\subseteq \{r+1,\ldots,2r\}$ such that the set $\{u_i: i\in I\cup J\}$ is maximum  linearly independent; thus  $|I|+|J|=\rho$.
Set $K=\{1,\ldots,r\}\setminus I$, $ L=\{r+1,\ldots,2r\}\setminus J$, and $J'=J\setminus \{k\}$, where  $k$ is some given (fixed) element of $J$.
For  any $l\in L$, there exists scalars $a_{l,i}\in\oR$ such that 
\begin{equation}\label{eql}
u_l=\sum_{i\in I\cup J'} a_{l,i} u_i + a_{l,k}u_k.
\end{equation}
Set 
$$A_l= \sum_{i\in I\cup J'} a_{l,i} U_{ik}\ \ \text{ for } \  l\in L.$$
Then, define the set  $\WW$  consisting of the matrices $U_{ii}$ for $i\in I\cup J$,
$U_{ij}$ for all $i\ne j $ in $I\cup J'$,
$U_{kj}$ for all $j\in J'$, and $A_l$ for all $l\in L$.
Then, $ |\WW|= \rho + {\rho-1\choose 2} + r-1= {\rho\choose 2} +r = {\rho+1\choose 2}+ r-\rho \le {\rho+1\choose 2}-1.$
In order to conclude the proof it suffices to show that $\WW$ spans the space $\UU.$

Clearly, $\WW$ spans all matrices $U_{ij}$ with $i,j\in \{1,\ldots, r\}$. Moreover, by its definition  $\WW $ contains all matrices $U_{ij}$ for $i,j\in J$. Consequently, it remains to show that  $U_{kl} \in \WW$ for all $l \in L$, $U_{lj}\in \WW$ for all $l \in L$ and $j\in J'$  and that  $U_{ll'}\in \WW$ for all $l,l' \in L$.
Fix $l\in L$. Using (\ref{eql}) we obtain that $U_{lk} =A_l+a_{l,k}U_{kk}$ lies in the span of $\WW$.
Moreover, for $j\in J'$, $U_{lj}=\sum_{i\in I\cup J'}a_{l,i}U_{ij}+a_{l,k}U_{kj}$ also lies in the span of $\WW$.
Finally, for $l'\in L$,
$U_{ll'}= \sum_{i,j\in I\cup J'}a_{l,i}a_{l',j}U_{ij} + a_{l',k}A_l+a_{l,k}A_{l'} +a_{l,k}a_{l',k}U_{kk}$ is also spanned by $\WW$.
This concludes the proof.
\end{proof}

\begin{lemma}\label{sdrtghui}Let $v_1,\ldots,v_n$ be a family of linearly independent vectors in $\oR^n$. Then the matrices  $(v_iv_j^\sfT+v_jv_i^\sfT)$ for  $1\le i\le j \le n$ span $\mcs^n$.
\end{lemma}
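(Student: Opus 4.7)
The plan is to reduce this to the fact that the standard symmetric matrices $E_{ij}:=e_ie_j^{\sfT}+e_je_i^{\sfT}$ for $1\le i\le j\le n$ form a basis of $\mcs^n$, using the invertibility of the matrix whose columns are $v_1,\ldots,v_n$.

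First, let $V$ denote the $n\times n$ matrix with columns $v_1,\ldots,v_n$. Since the $v_i$ are linearly independent in $\mr^n$, the matrix $V$ is invertible. A direct computation shows that
\begin{equation*}
v_iv_j^{\sfT}+v_jv_i^{\sfT}=V(e_ie_j^{\sfT}+e_je_i^{\sfT})V^{\sfT}=VE_{ij}V^{\sfT}
\end{equation*}
for all $1\le i\le j\le n$. Thus the family of matrices in the lemma is precisely the image of the standard basis $\{E_{ij}:1\le i\le j\le n\}$ of $\mcs^n$ under the congruence map $\Phi:\mcs^n\to\mcs^n$, $S\mapsto VSV^{\sfT}$.

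Next, I observe that $\Phi$ is a linear automorphism of $\mcs^n$: it is clearly linear and $\mcs^n$-valued, and since $V$ is invertible, $\Phi$ has the two-sided inverse $S\mapsto V^{-1}S(V^{-1})^{\sfT}$. An invertible linear map sends a basis to a basis, so $\{VE_{ij}V^{\sfT}:1\le i\le j\le n\}$ is a basis of $\mcs^n$; in particular it spans $\mcs^n$, which is exactly the claim.

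There is no serious obstacle here: the only thing to notice is that the congruence action by an invertible matrix is a bijection on $\mcs^n$, which makes the cardinality count $\binom{n+1}{2}=\dim\mcs^n$ automatically upgrade spanning to a basis statement (or equivalently, linear independence of the matrices to spanning). If a reader prefers an explicit argument, one can instead suppose $\sum_{i\le j}c_{ij}(v_iv_j^{\sfT}+v_jv_i^{\sfT})=0$, rewrite the left-hand side as $VCV^{\sfT}$ for a symmetric matrix $C$ whose off-diagonal entries are the $c_{ij}$ and whose diagonal entries are $2c_{ii}$, and then invert $V$ to conclude $C=0$, hence all $c_{ij}=0$.
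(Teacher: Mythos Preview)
Your proof is correct but takes a different route from the paper. The paper argues by duality: it takes a matrix $Z\in\mcs^n$ orthogonal to all the $v_iv_j^\sfT+v_jv_i^\sfT$, observes that since the $v_i$ span $\oR^n$ every $x\in\oR^n$ is a linear combination of them, and concludes $x^\sfT Zx=0$ for all $x$, hence $Z=0$. Your argument instead writes $v_iv_j^\sfT+v_jv_i^\sfT=VE_{ij}V^\sfT$ and notes that congruence by the invertible matrix $V$ is an automorphism of $\mcs^n$, so it carries the standard basis $\{E_{ij}\}$ to another basis. Your approach is more explicit and immediately yields that the family is a basis (not merely spanning); the paper's orthogonal-complement argument is a bit shorter and relies only on the fact that a symmetric matrix vanishing on all quadratic forms is zero. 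Both are standard and equally acceptable.
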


\begin{proof}Consider a matrix $Z \in \mcs^n$ such that $\la Z,(v_iv_j^\sfT+v_jv_i^\sfT)/2\ra =0$ for all $i,j \in [n]$; we show that $Z$ is the zero matrix. For any vector $x \in \oR^n$, we can write $x=\sum_{i=1}^n\lambda_iv_i$ for some scalars $\lambda_i \ (i \in [n])$ and thus $x^\sfT Zx=0$. This implies  $Z=0$.
\end{proof}

We can now show   the main result of this section.

\begin{theorem}\label{theomainr}
For any tree $T$,  we have that $\egd(T\stp K_r)\le r$. 
\end{theorem}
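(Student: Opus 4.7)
The plan is induction on $|V(T)|$. The base case $|V(T)|=1$ gives $T\stp K_r=K_r$, and Lemma \ref{lemrankr} directly yields $\egd(K_r)\le r$. For the inductive step, let $v$ be a leaf of $T$ with unique neighbor $u$, and set $T'=T\setminus\{v\}$, $G=T\stp K_r$, $G'=T'\stp K_r$, and $S_i=\{i\}\times[r]$ for $i\in V(T)$. By induction $\egd(G')\le r$. Using the reformulation $\egd(G)=\max_{x\in \ext\EE(G)}\gd(G,x)$ from Lemma \ref{eq:reforegd}, I fix an arbitrary $x\in \ext\EE(G)$ and aim to produce a Gram representation of $x$ in $\oR^r$.

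By Lemma \ref{lemextG2}, $\fib(x)$ is a face of $\EE_n$; I pick a completion $X$ of $x$ that is extreme in $\fib(x)$, hence extreme in $\EE_n$ by \eqref{extremepoints}. Let $u_w\in \oR^\rho$ ($w\in V(G)$) be a Gram representation of $X$ with $\rho=\rank X$. Proposition \ref{propface} gives $\dim\UU_V=\binom{\rho+1}{2}$, while Lemma \ref{lemextG1} gives $\UU_E\subseteq \UU_V$. My goal is $\rho\le r$, and I argue by contradiction assuming $\rho\ge r+1$.

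The key step is to analyse the leaf clique $S_u\cup S_v$, which forms a $K_{2r}$ inside $G$. Set $\rho_{uv}=\dim\lan u_w:w\in S_u\cup S_v\ra$. All matrices $U_{w,w'}=(u_wu_{w'}^\sfT+u_{w'}u_w^\sfT)/2$ with $w,w'\in S_u\cup S_v$ lie in $\UU_V$, since the diagonal generators $u_wu_w^\sfT$ already span $\UU_V$ and the off-diagonal ones lie in $\UU_E\subseteq \UU_V$. By Lemma \ref{sdrtghui}, their span equals the full symmetric-operator space on $\lan u_w:w\in S_u\cup S_v\ra$, of dimension $\binom{\rho_{uv}+1}{2}$. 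I then split into two cases. If $\rho_{uv}\le r$, the vectors $u_w$ for $w\in S_v$ are already linear combinations of the vectors on $S_u$; removing $S_v$ from the Gram family gives a rank-$\rho$ psd completion of $x|_{E(G')}$, and invoking the inductive bound $\egd(G')\le r$ (together with a perturbation argument lifting extremality of $x$ to extremality of the restricted data on $G'$) forces $\rho\le r$, the desired contradiction. If $\rho_{uv}\ge r+1$, Lemma \ref{lemr} asserts that the sub-span generated by within-cluster matrices (pairs $w,w'$ both in $S_u$ or both in $S_v$) has dimension strictly less than $\binom{\rho_{uv}+1}{2}$, so the cross matrices between $S_u$ and $S_v$ contribute genuinely new directions inside $\UU_V$. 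Matching these extra directions against the identity $\dim\UU_V=\binom{\rho+1}{2}$ and against the inductive control on the directions that the remaining vectors on $V(G')\setminus S_u$ can supply to $\UU_V$ produces the required numerical contradiction.

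The main obstacle is making the contradiction rigorous in both cases. The inductive hypothesis is a statement about \emph{extreme points} of $\EE(G')$, but $x|_{E(G')}$ need not be extreme in $\EE(G')$; so Case~1 needs a careful perturbation argument to transfer the rank bound from the extreme points of $\EE(G')$ to the particular restriction at hand. In Case~2 the dimension count is the crux: one must identify precisely which directions supplied by outside vectors $u_w$ with $w\notin S_u\cup S_v$ are compatible with the constraint $\UU_E\subseteq \UU_V$ on the remaining edges of $G'$ and with the inductive rank ceiling, and show that the Lemma \ref{lemr} deficit on the leaf clique cannot be absorbed.
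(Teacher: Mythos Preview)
Your proposal has a genuine gap, which you yourself flag. The induction never closes: in Case~1 the inductive hypothesis concerns extreme points of $\EE(G')$, and you have no mechanism to force $x|_{E(G')}$ to be extreme there (nor does it follow that the chosen extreme completion $X$ of $x$ restricts to an extreme completion over $G'$). In Case~2 the ``numerical contradiction'' is never carried out, and I do not see how to make it work: you would need a uniform bound on the contribution of the vectors outside $S_u\cup S_v$ to $\dim\UU_V$, and the inductive hypothesis does not supply one. More fundamentally, your target ``$\rho=\rank X\le r$ for this particular extreme $X\in\fib(x)$'' is stronger than what is needed and may be harder to obtain than simply exhibiting \emph{some} rank-$r$ completion.

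The paper avoids induction altogether. Given $x\in\ext\EE(G)$ and \emph{any} completion $X\in\EE_n$, it shows that for every tree edge $\{i,j\}$ the principal submatrix $X^{ij}=X[V_i\cup V_j]$ has rank at most $r$, and then invokes Lemma~\ref{lem:cliquepsd} (clique-sum gluing along the tree) to build a new completion of rank at most $r$. The rank bound on $X^{ij}$ comes from a local perturbation argument: if $\rank X^{ij}\ge r+1$, Lemma~\ref{lemr} produces a nonzero $R\in\UU^\perp$ (where $\UU$ is spanned by the within-block matrices $U_{hk}$ for $h,k\in V_i$ and for $h,k\in V_j$), yielding a perturbation $Z$ of $X^{ij}$ that vanishes on $V_i\times V_i$ and $V_j\times V_j$ but is nonzero on some cross entry in $V_i\times V_j$. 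Since the tree structure guarantees that these cross edges belong to no other maximal clique, one can complete $X^{ij}\pm\epsilon Z$ together with the unchanged $X^{i'j'}$ to two distinct points of $\EE(G)$ with midpoint $x$, contradicting extremality of $x$. This is exactly the idea you reach for in Case~2, but applied directly to each clique separately rather than entangled with an inductive dimension count; the gluing Lemma~\ref{lem:cliquepsd} then replaces the induction entirely.
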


\begin{proof}Let $G=T\stp K_r$, where $T$ is a tree on $[t]$. Say, $G=(V,E)$ with $|V|=n$. So the node set of $G$ is $V=\cup_{i=1}^t V_i$, where the $V_i$'s are pairwise disjoint sets, each of cardinality $r$.
By definition of the strong product, for any edge $\{i,j\}$ of $T$, the set $V_i\cup V_j$ induces a clique in $G$, denoted as   $C_{ij}$. Then,  $G$ is the union of the cliques $C_{ij}$ over  all edges $\{i,j\}$ of $T$.
We show that $\egd(G)\le r$. For this,  pick an  element $x\in \ext\EE(G)$. Then  $x=\pi_E(X)$ for some $X\in \EE_n$.
As $C_{ij}$ is a clique in $G$,  the principal submatrix  $X^{ij}:=X[C_{ij}] $   is fully determined from $x$.
To show that $x$ has a psd completion of rank at most $r$, it suffices to show that $\rankspace X^{ij}\le r$ for all edges $\{i,j\}$ of $T$. Indeed,   by applying Lemma \ref{lem:cliquepsd},  we can then conclude the existence of a common psd completion of  the $X^{ij}$ of rank at most $r$. 

Pick an edge $\{i,j\}$ of $T$ and set $\rho=\rankspace X^{ij}$. Assume that $\rho\ge r+1$; we show  below that there exists a nonzero perturbation $Z$ of $X^{ij}$ such that 
\begin{equation}\label{eqZ}
\begin{array}{l}
Z_{hk}=0  \text{ for all }  (h,k)\in (V_i\times V_i) \cup (V_j\times V_j) ,\\   Z_{hk}\ne 0 \text{ for some } (h,k)\in V_i\times V_j.
\end{array}
\end{equation}
This permits to reach a contradiction: As $Z$ is a perturbation of $X^{ij}$, there exists $\epsilon>0$ for which $X^{ij}+\epsilon Z, \ X^{ij}-\epsilon Z\succeq 0$.
By construction,  $C_{ij}$ is the only maximal clique of $G$ containing the edges  $\{h,k\}$ of $G$ with $h\in V_i$ and $k\in V_j$.
 Hence, one can find  a psd completion $X'$ (resp., $X''$) of the matrix $X^{ij}+\epsilon Z$ (resp., $X^{ij}-\epsilon Z$) 
and the matrices  $X^{i'j'}$ for all edges $\{i',j'\}\ne \{i,j\}$ of $T$. Now, $x={1\over 2}(\pi_E(X')+\pi_E(X''))$, where 
$\pi_E(X'),\pi_E(X'')$ are distinct elements of $\EE(G)$, contradicting the fact that $x$ is an extreme point of $\EE(G)$.

We now construct the desired  perturbation $Z$ of $X^{ij}$ satisfying (\ref{eqZ}).  For this let $u_h$ ($h\in V_i\cup V_j$) be a Gram representation of $X^{ij}$ in $\oR^\rho$ and 
let $\UU\subseteq \SSS^\rho $ denote the linear span of the matrices 
$U_{hk}=(u_hu_k^\sfT+u_ku_h^\sfT)/2$ for all $h,k\in V_i$ and all $h,k\in V_j$.
Applying Lemma \ref{lemr}, as $\rho \ge r+1$, 
we deduce that $\dim \UU<{\rho+1\choose 2}$. 
Hence, by Lemma \ref{sdrtghui}, there exists a nonzero matrix $R\in\SSS^\rho$ lying in $ \UU^\perp$ for which the matrix 
 $Z\in \SSS^{2r}$ defined by 
$Z_{hk}=\la R, U_{hk}\ra$ for all $h,k\in V_i\cup V_j$, is nonzero.  By construction, $Z$ is a perturbation of $X^{ij}$ (recall Proposition \ref{propface}) and it satisfies $Z_{hk}=0$ whenever the pair $(h,k)$ is contained in $V_i$ or in $V_j$.  Moreover, 
as $Z\ne 0$, we have $Z_{hk}\ne 0$ for some $h\in V_i$ and $k\in V_j$.
Thus (\ref{eqZ}) holds and the proof is completed.
\end{proof}

\begin{corollary}\label{cor:egdlesla}
For any graph $G$, $\egd(G)\le \sla(G)$.
\end{corollary}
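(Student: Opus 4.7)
The proof is a direct combination of the two main results established just before the statement, namely Theorem \ref{theomainr} (which gives $\egd(T\stp K_r)\le r$ for every tree $T$) together with the minor-monotonicity of $\egd(\cdot)$ from Lemma \ref{lemminor}. The plan is therefore very short: set $r=\sla(G)$, invoke the definition of $\sla$ to produce a tree $T$ such that $G$ is a minor of $T\stp K_r$, apply minor-monotonicity to conclude that $\egd(G)\le \egd(T\stp K_r)$, and finish by applying Theorem \ref{theomainr} to bound the right-hand side by $r$.

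More precisely, I would write: let $r=\sla(G)$. By Definition \ref{def:strongla} there exists a tree $T$ such that $G\minor T\stp K_r$. Since $\egd$ is minor-monotone (Lemma \ref{lemminor}), we obtain
\begin{equation*}
\egd(G)\le \egd(T\stp K_r).
\end{equation*}
By Theorem \ref{theomainr} applied to the same tree $T$, we have $\egd(T\stp K_r)\le r$. Combining these two inequalities gives $\egd(G)\le r=\sla(G)$, as required.

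There is no real obstacle here, as all of the work has been done in the preceding results. The only thing one must be slightly careful about is that Lemma \ref{lemminor} is stated for edge deletions and edge contractions, whereas minors also involve vertex deletion; but vertex deletion is just repeated edge deletion followed by discarding an isolated vertex (which trivially does not increase $\egd$), so the minor-monotone extension is automatic and we may indeed apply it to the minor relation $G\minor T\stp K_r$.
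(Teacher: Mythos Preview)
Your proof is correct and follows exactly the same approach as the paper's own proof: set $k=\sla(G)$, use the definition of $\sla$ to get $G\preceq T\stp K_k$, then apply Lemma \ref{lemminor} and Theorem \ref{theomainr} in succession.
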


\begin{proof}If $\sla(G)=k$, then  $G$ is a minor of $T\stp K_k$ for some tree $T$ and thus   $\egd(G)\le \egd(T\stp K_k)\le k$, by Lemma \ref{lemminor} and Theorem \ref{theomainr}.
\end{proof}

\section{The extreme Gram dimension of some graph classes}\label{sec_forbminor}

In this section we construct three classes of graphs $F_r$, $G_r$, $H_r$, 
whose extreme Gram dimension is equal to $r$. Therefore, they are forbidden minors for the class $\GG_{r-1}$
 of graphs with extreme Gram dimension at most $r-1$. 
 As we will see in the next section, this gives all the forbidden minors for   the class $\GG_2$.

 The graphs $G_r$ were already considered by Colin de Verdi\`ere \cite{V98} in relation to the graph parameter $\nu(\cdot)$, to which we will come back in Section~\ref{seclast}. Each of the graphs $G=F_r$, $G_r$, $H_r$  has ${r+1\choose 2}$ nodes and thus their extreme Gram dimension is at most $r$ (recall Lemma \ref{lem:uboundegd}). Moreover, they  satisfy:    $\egd(G\slash e)\le r-1$  after contracting any edge $e$.
In order to  show equality $\egd(G)=r$,
we will rely on Lemma \ref{lem:basic}.


To use  Lemma~\ref{lem:basic}  we need tools permitting to show existence of a {\em unique} completion for a partial matrix $x\in \EE(G)$. 
We introduce below  such a tool: `forcing a non-edge with a  singular clique'. This is based on  the following property, which is a special case of relation (\ref{relker}):

\begin{equation}\label{lem:psd} 
\left(\begin{matrix}
A & b\cr
b^\sfT & \alpha\
\end{matrix}\right) \succeq 0 
\Longrightarrow  b^\sfT u=0 \ \ \forall u\in \Ker A.
\end{equation}

%
%
\begin{lemma}\label{lemforce}
 Let $x\in \EE(G)$, let $C\subseteq V$ be a clique of $G$ and  let $\{i,j\}\not\in E(G)$ with $i\not\in C$, $j\in C$.
Set $x[C]=(x_{ij})_{i,j\in C}\in \EE_{|C|}$ (setting $x_{ii}=1$ for all $i$).   Assume  that $i$ is adjacent to all nodes of $C\setminus \{j\}$,  $x[C]$ is singular and $x[C\setminus \{j\}]$  is nonsingular.
Then, for any psd completion $X$ of $x$,  the $(i,j)$-th entry $X_{ij}$  is uniquely determined. 
\end{lemma}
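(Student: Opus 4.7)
The plan is to exploit the key observation that the hypotheses force the kernel of $x[C]$ to be one-dimensional, with a basis vector whose $j$-coordinate is nonzero. This, combined with the psd kernel-containment property (\ref{lem:psd}), pins down the single unknown entry.

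First I would observe that since $x[C\setminus\{j\}]$ is nonsingular while $x[C]$ is singular, the corank of $x[C]$ must equal exactly $1$: the rank can drop by at most one when appending a single row/column. So $\Ker x[C]$ is spanned by a single nonzero vector $u\in\oR^C$. Moreover, the component $u_j$ of $u$ at position $j$ must be nonzero; otherwise, the restriction $u|_{C\setminus\{j\}}$ would be a nonzero element of $\Ker x[C\setminus\{j\}]$, contradicting nonsingularity of $x[C\setminus\{j\}]$.

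Next, let $X\in\EE_n$ be any psd completion of $x$ and consider the principal submatrix $X[C\cup\{i\}]$, which is psd. Because $i$ is adjacent in $G$ to every node of $C\setminus\{j\}$, every entry of the $i$-th column of $X[C\cup\{i\}]$ is already determined by $x$, except for the entry $X_{ij}$. Writing this submatrix in the block form
$$X[C\cup\{i\}] = \begin{pmatrix} x[C] & b \\ b^\sfT & 1 \end{pmatrix},$$
where $b_k=x_{ki}$ for $k\in C\setminus\{j\}$ and $b_j=X_{ij}$ is the unknown, property (\ref{lem:psd}) yields $b^\sfT u=0$, i.e.
$$X_{ij}\,u_j = -\sum_{k\in C\setminus\{j\}} x_{ki}\,u_k.$$
Since $u_j\ne 0$, this equation uniquely determines $X_{ij}$, independently of the choice of psd completion $X$, concluding the proof.

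There is no real obstacle: the argument is essentially a one-line consequence of (\ref{lem:psd}) once one verifies that $u_j\ne 0$, which itself follows immediately from the corank-$1$ conclusion. The only care needed is to ensure that the entries $X_{ki}$ for $k\in C\setminus\{j\}$ are indeed all specified by $x$, which is precisely the role of the adjacency assumption between $i$ and $C\setminus\{j\}$.
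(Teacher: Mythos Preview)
Your proof is correct and follows essentially the same approach as the paper's: both take the principal submatrix $X[C\cup\{i\}]$, apply the kernel-containment property (\ref{lem:psd}) with a nonzero vector $u\in\Ker x[C]$, and use the nonsingularity of $x[C\setminus\{j\}]$ to conclude $u_j\ne 0$, which then pins down $X_{ij}$. Your additional observation that the corank is exactly $1$ is correct but not strictly needed --- the paper simply uses any nonzero kernel vector, whose $j$-coordinate must be nonzero by the same argument you give.
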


\begin{proof}Let $X$ be a psd completion of $x$. 
The principal submatrix $X[C\cup \{i\}]$ has the block form shown in (\ref{lem:psd}) (with $A$ being indexed by $C$), where all entries are specified (from $x$) except the  entry $b_{j}=X_{ij}$ which is unspecified since $\{i,j\}\not\in E(G)$. As $x[C]$ is singular there exists a nonzero vector $u$ in the kernel of $x[C]$. Moreover,   since $x[C\setminus\{j\}]$ is nonsingular if follows that  $u_j\ne 0$. Hence the condition $b^\sfT u=0$  permits to derive 
the value of $X_{ij}$ from $x$.
\end{proof}

When applying Lemma \ref{lemforce} we will say   that ``the clique $C$ forces the pair $\{i,j\}$''.
The  lemma will be used  in an iterative manner: Once a non-edge $\{i,j\}$ has been forced, we know the value $X_{ij}$ in any psd completion $X$ and thus we can replace $G$ by $G+\{i,j\}$ and search for a new forced pair in the extended graph $G+\{i,j\}$. 

We note in passing that a general  framework for constructing partial psd matrices with a unique psd completion  has been developed in \cite{LV13}.  Following that approach,  most of the constructions described  below can be easily recovered.

\subsection{The class $F_r$}\label{secFr}
For $r\ge 2$ the graph $F_r$ has $r+{r\choose 2}={r+1\choose 2}$ nodes, denoted as $v_i$ (for $i\in [r]$) and $v_{ij}$ (for $1\le i<j\le r$); it consists of a clique $K_r$ on the nodes $\{v_1,\ldots,v_r\}$ together with the cliques $C_{ij}$  on $\{v_i,v_j,v_{ij}\}$ for all $1\le i<j\le r$.  
The  graphs  $F_3$ and $F_4$ are  illustrated in Figure~\ref{fig:frminors}.

\begin{figure}[!h]
\bc  \includegraphics[scale=0.4]{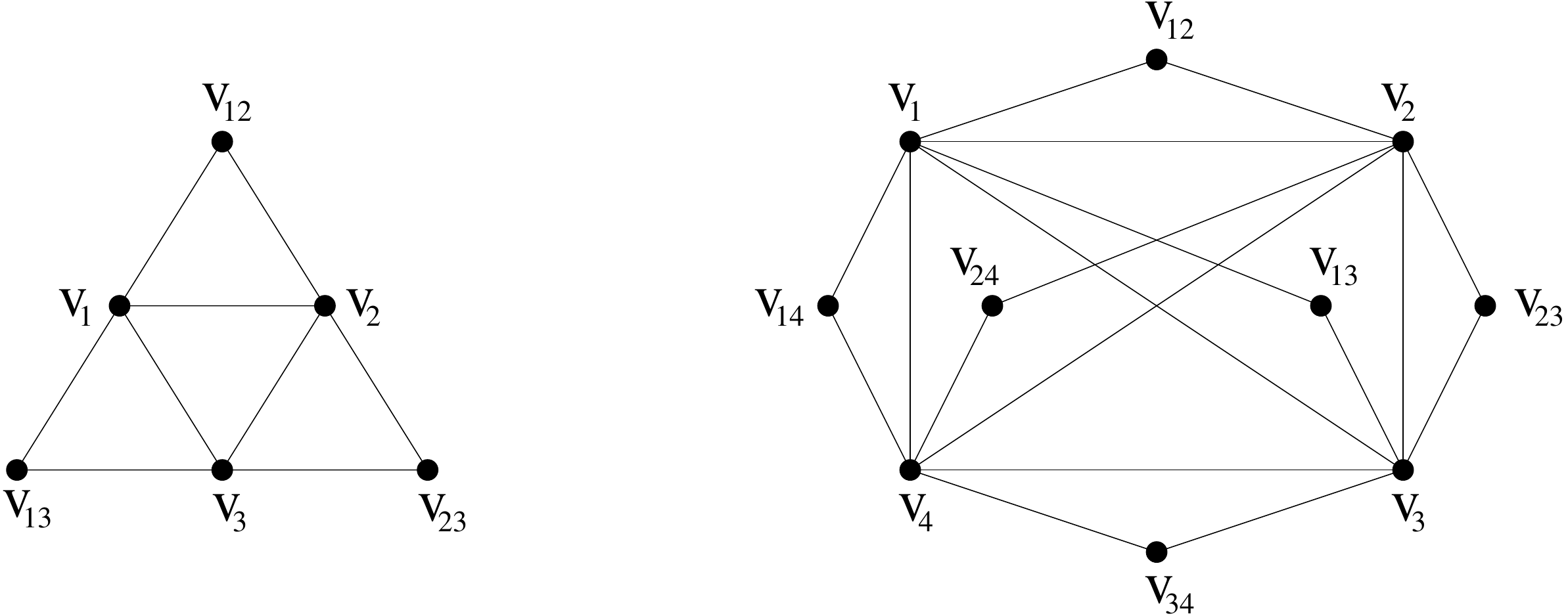}
 \caption{The graphs $F_3$ and $F_4$.}\label{fig:frminors}
\ec
 \end{figure}
For $r=2$, $F_2=K_3$ has extreme Gram dimension 2.  More generally:

\begin{theorem}\label{theoFr}
For $r\ge 2$, $\egd(F_r)=r$. Moreover, $F_r$ is a minimal forbidden minor for the class $\GG_{r-1}$. 
\end{theorem}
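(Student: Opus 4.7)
The upper bound $\egd(F_r)\le r$ is immediate from Lemma \ref{lem:uboundegd}, since $|V(F_r)|=\binom{r+1}{2}$. The substance of the theorem is the matching lower bound and the minimality claim.

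For the lower bound I plan to use Lemma \ref{lem:basic}: it suffices to produce $x\in \EE(F_r)$ whose unique psd completion is an extreme point of $\EE_n$ of rank $r$. The natural candidate comes from Example \ref{example}: assign $v_i\mapsto e_i$ and $v_{ij}\mapsto (e_i+e_j)/\sqrt 2$ in $\oR^r$, let $X$ be the resulting Gram matrix (which by Example \ref{example} is an extreme point of $\EE_n$ of rank $r$), and set $x=\pi_{E(F_r)}(X)$. Uniqueness of the completion is then obtained by iterating Lemma \ref{lemforce}. In the first round, each triangle $C_{ij}=\{v_i,v_j,v_{ij}\}$ is a clique of $F_r$ on which the specified matrix $x[C_{ij}]$ has rank $2$ with nonsingular subblock $I_2$ on $\{v_i,v_j\}$; since every other node $v_k$ is adjacent to both $v_i$ and $v_j$ inside $K_r$, Lemma \ref{lemforce} forces $X_{v_k,v_{ij}}$. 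After this round every tip $v_{ij}$ is (virtually) adjacent to all of $v_1,\ldots,v_r$, so the set $C=\{v_1,\ldots,v_r,v_{ij}\}$ is a clique of the extended graph whose specified matrix has rank $r$ with nonsingular subblock $I_r$ on $C\setminus\{v_{ij}\}$; since every tip $v_{kl}$ is now (virtually) adjacent to all of $v_1,\ldots,v_r$, a second application of Lemma \ref{lemforce} forces $X_{v_{kl},v_{ij}}$. Every non-edge is thus forced, $x$ has the unique completion $X$, and Lemma \ref{lem:basic} yields $\egd(F_r)\ge r$.

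For minimality I have to check that $\egd(H)\le r-1$ for every $H$ obtained from $F_r$ by a single edge deletion or contraction. The reductions are as follows: every tip $v_{ij}$ has degree $2$ with adjacent neighbours $v_i,v_j$, so contracting any incident edge gives $F_r\setminus v_{ij}$; deleting a tip edge $\{v_i,v_{ij}\}$ makes $v_{ij}$ a pendant, whose removal does not change $\egd$ by Lemma \ref{lemcliquesum}; contracting a core edge $\{v_i,v_j\}$ similarly turns $v_{ij}$ into a pendant and reduces to a proper minor of $F_r\setminus v_{ij}$. In each of these cases the resulting graph is a minor of the chordal graph $F_r\setminus v_{ij}$, whose clique number is $r$; combining the general inequality $\egd\le\gd$ with the identity $\gd(G)=\omega(G)-1$ for chordal $G$ (from \cite{LV12}) together with minor-monotonicity of $\egd$ (Lemma \ref{lemminor}) gives $\egd\le r-1$.

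The single genuinely hard case is the deletion of a core edge $\{v_i,v_j\}$: chordality is destroyed because the $4$-cycle on $v_i,v_{ij},v_j,v_k$ is chordless for any $k\neq i,j$, so the chordal bound cannot be invoked directly. This is the step I expect to be the main obstacle. My plan here is a direct construction: for any $x\in\ext\EE(F_r\setminus\{v_i,v_j\})$ I will choose a value $\alpha$ for the free entry $x_{v_i,v_j}$ so that the extended partial matrix lies in the chordal elliptope $\EE(F_r\setminus v_{ij})$, and then apply the chordal completion result on $F_r\setminus v_{ij}$ to obtain a psd completion of rank at most $r-1$. The delicate point will be to verify that a suitable $\alpha$ always exists in the admissible range $[-1,1]$ and that the resulting chordal completion genuinely has rank $r-1$ rather than $r$; I expect this to follow from a convexity argument on the one-parameter family of completions as $\alpha$ varies, together with the fact that the extremality of $x$ constrains the structure of its fibre tightly enough to pin down a low-rank extension.
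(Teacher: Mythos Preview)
Your lower bound argument is essentially the paper's: same Gram labelling, same appeal to Lemma~\ref{lem:basic}, same iterated use of Lemma~\ref{lemforce}. (The paper forces the tip--tip pairs $\{v_{ij},v_{i'j'}\}$ again with the triangle $C_{ij}$ rather than with the $(r{+}1)$-clique $\{v_1,\dots,v_r,v_{ij}\}$, but your variant is equally valid.)

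For minimality there are two genuine issues.

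First, your chordal step is off by one. For a chordal graph $G$ one has $\gd(G)=\omega(G)$, not $\omega(G)-1$: already $\gd(K_r)=r$ (see the displayed identity $\gd(K_n)=n$ right after Definition~\ref{def:gram} in the introduction, or Theorem~\ref{thm:gd}(i)--(ii)). So from $\egd\le\gd$ and $\omega(F_r\setminus v_{ij})=r$ you only get $\egd\le r$, which is useless. The repair is easy: drop $\gd$ entirely and use the node-count bound. The graph $F_r\setminus v_{ij}$ has $\binom{r+1}{2}-1$ nodes, hence $\egd(F_r\setminus v_{ij})\le r-1$ by Lemma~\ref{lemrankr}. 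This also makes all your contraction cases trivial in one line: $F_r/e$ has $\binom{r+1}{2}-1$ nodes, so $\egd(F_r/e)\le r-1$ directly.

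Second, and more seriously, your plan for the deletion of a core edge $e=\{v_i,v_j\}$ is not a proof and does not obviously lead to one. You propose to pick a value $\alpha$ for the missing entry and land in $\EE(F_r\setminus v_{ij})$, but that elliptope lives on a graph that does not contain the node $v_{ij}$ at all, so restricting to it discards the two specified entries $x_{v_i,v_{ij}}$, $x_{v_j,v_{ij}}$; you would then have to argue separately that the rank-$(r{-}1)$ completion you obtain can be extended by a unit vector for $v_{ij}$ without raising the rank, and nothing in your sketch addresses that. If instead you keep $v_{ij}$ and add $\alpha$ you are back in $\EE(F_r)$, where $\egd=r$. The ``convexity argument on the one-parameter family'' is not a mechanism that produces rank drops here.

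The paper bypasses this entirely with a structural argument: it shows directly that $H=F_r\setminus\{v_1,v_2\}$ is a subgraph of $K_{1,r-1}\boxtimes K_{r-1}$, via the explicit partition
\[
V_0=\{v_{12},v_3,\dots,v_r\},\quad V_1=\{v_1,v_{13},\dots,v_{1r}\},\quad V_2=\{v_2,v_{23},\dots,v_{2r}\},
\]
and $V_k=\{v_{kj}:\,k+1\le j\le r\}$ for $3\le k\le r-1$, with $V_0$ at the centre of the star. Every edge of $H$ lies in some $V_0\cup V_k$, so $\sla(H)\le r-1$, and then Theorem~\ref{theomainr} (equivalently Corollary~\ref{cor:egdlesla}) gives $\egd(H)\le r-1$. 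This is the missing idea: use the strong-product upper bound rather than attempting an ad hoc completion.
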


\begin{proof}Since $F_r$ has $\binom{r+1}{2}$ nodes it follows from Lemma \ref{lem:uboundegd} that $\egd(F_r)\le r$.  We now  show that $\egd(F_r)\ge r$. 
For this we label the nodes $v_1,\ldots,v_r$ by the standard unit vectors $e_1,\ldots,e_r\in \oR^r$ and  $v_{ij}$ 
by the vector 
$(e_i+e_j)/\sqrt 2$. 
Consider the Gram matrix $X$ of these $n={r+1\choose 2}$ vectors  and its projection $x=\pi_{E(F_r)}(X)\in \EE(F_r)$. 
Using \eqref{eqdimextreme0} it follows directly that  $X$ is an extreme point of $\EE_n$. 
We now show that $X$ is the  only psd completion of $x$ which, in view of Lemma \ref{lem:basic}, implies that
$\egd(F_r)\ge r$.
For this we use Lemma \ref{lemforce}. Observe that, for each $1\le i<j\le r$, the matrix 
$x[C_{ij}]$ is singular.
First, for any $k\in [r]\setminus \{i,j\}$, the clique $C_{ij}$ forces the non-edge $\{v_k,v_{ij}\}$
and then, for any other $1\le i'<j'\le r$, the clique $C_{ij}$ forces the non-edge $\{v_{ij},v_{i'j'}\}$.
Hence, in any psd completion of $x$, all the entries indexed by non-edges are uniquely determined, i.e.,
$\fib(x)=\{X\}$. 

Next, we show minimality. Let $e$ be an edge of $F_r$, 
we  show  that $\egd(H)\le r-1$ where $H=F_r\backslash e$.
If $e$ is an edge of the form $\{v_i,v_{ij}\}$, then $H$ is the clique 1-sum of an edge and a graph on ${r+1\choose 2}-1$ nodes and thus $\egd(H)\le r-1$ follows using Lemmas \ref{lemcliquesum} and \ref{lemrankr}. Suppose now  that $e$ is contained in the central clique $K_r$, say
 $e=\{v_1,v_2\}$. 
We show that $H$ is contained in a graph of the form $T\stp K_{r-1}$ for some tree $T$. We choose $T$ to be the star $K_{1,r-1}$ and we 
  give a suitable partition of the nodes of $F_r$ into sets $V_0\cup V_1\cup\ldots \cup V_{r-1}$, where each $V_i$ has cardinality at most $r-1$, $V_0$ is  assigned to the center node of the star  $K_{1,r-1}$ and $V_1,\ldots,V_{r-1}$ are  assigned to the $r-1$ leaves of $K_{1,r-1}$. 
Namely, set $V_0=\{v_{12}, v_3,\ldots, v_r\}$,
$V_1=\{v_1,v_{13},\ldots, v_{1r}\}$,
$V_2=\{v_2,v_{23},\ldots, v_{2r}\}$ and, for $k\in \{3,\ldots, r-1\}$,
$V_{k}=\{v_{kj}: k+1\le j\le r\}$.
Then, in the graph $H$,  each edge is contained in one of the sets $V_0\cup V_k$ for $1\le k\le r-1$. This shows that $H$ is a subgraph of $K_{1,r-1}\stp K_{r-1}$ and thus $\egd(H)\le r-1$ (by Theorem~\ref{theomainr}).
\end{proof}

As an application of Theorem \ref{theoFr} we get:

\begin{corollary}
If the tree $T$ has a node of degree at least $(r-1)/2$ then $$\egd(T\stp K_r)=r.$$
\end{corollary}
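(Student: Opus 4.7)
The upper bound $\egd(T\stp K_r)\le r$ is immediate from Theorem~\ref{theomainr}, so the whole task is to produce the matching lower bound. The plan is to show that $F_r$ is a minor of $T\stp K_r$ whenever $T$ has a node of degree at least $(r-1)/2$; combining this with $\egd(F_r)=r$ (Theorem~\ref{theoFr}) and the minor-monotonicity of $\egd$ (Lemma~\ref{lemminor}) then closes the proof.

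First I would reduce to a star: if $v\in V(T)$ has degree $d\ge(r-1)/2$, then the induced star $K_{1,d}$ is a subgraph of $T$, so $K_{1,d}\stp K_r$ is a subgraph of $T\stp K_r$, and it suffices to find an $F_r$-minor there. Writing the vertices of $K_{1,d}\stp K_r$ as $(x,i)$ with $x$ either the center $v$ or a leaf $u_s$ ($s\in[d]$) and $i\in[r]$, I would record the structure needed below: the central column $V_0=\{(v,i):i\in[r]\}$ is a $K_r$, each leaf column $L_s=\{(u_s,i):i\in[r]\}$ is a $K_r$, the central column is fully joined to each $L_s$, but distinct leaf columns $L_s,L_t$ have no edges between them.

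Next I would realize $F_r$ as a subgraph of $K_{1,d}\stp K_r$ after some edge deletions. Use $V_0$ for the central $K_r$ of $F_r$, mapping $v_i\mapsto(v,i)$. The $\binom{r}{2}$ pair-nodes $v_{ij}$ of $F_r$ are placed injectively among the $dr$ leaf vertices, at most $r$ of them in each leaf; the degree bound gives $dr\ge r(r-1)/2=\binom{r}{2}$, which is exactly enough room. The vertex chosen as $v_{ij}$ will have extra adjacencies in the ambient graph, namely to $(v,k)$ for $k\ne i,j$ and to any other pair-nodes parked inside the same leaf $L_s$; all these superfluous edges are then deleted. The remaining graph is precisely $F_r$, since vertices in different leaves are already non-adjacent, so the only spurious edges to worry about lie inside a single clique $L_s$ or across $V_0$ to a leaf vertex, and both were handled by the deletions.

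The mildly subtle point, and the reason the bound $(r-1)/2$ suffices rather than the naive $\binom{r}{2}$ one might first guess, is that taking minors permits edge deletions, so one can safely pack up to $r$ different pair-nodes $v_{ij}$ inside the same $r$-clique $L_s$ even though these nodes must be pairwise non-adjacent in $F_r$. Once this observation is in hand, the rest of the verification is routine bookkeeping.
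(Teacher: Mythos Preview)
Your proof is correct and follows the same approach as the paper: both show that $F_r$ sits inside $T\stp K_r$ and then invoke $\egd(F_r)=r$ together with minor-monotonicity. The paper's proof is a single line, ``Directly from Theorem~\ref{theoFr}, as $T\stp K_r$ contains a subgraph $F_r$,'' and your argument unpacks exactly this embedding.

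One small terminological remark: you phrase the key step as finding $F_r$ as a \emph{minor} via edge deletions, and you flag as ``mildly subtle'' that packing several pair-nodes $v_{ij}$ into the same leaf clique $L_s$ is allowed because minors permit edge deletions. In fact this is already what it means for $F_r$ to be a (not necessarily induced) \emph{subgraph}: one only needs the edges of $F_r$ to be present in the host graph, and extra adjacencies among the images of the $v_{ij}$ are harmless. So the bound $d\ge(r-1)/2$ already suffices to obtain $F_r$ as a subgraph, not merely as a minor; the ``naive $\binom{r}{2}$'' you mention would only be needed if one insisted on an \emph{induced} copy. This does not affect correctness, but it explains why the paper can state the conclusion so tersely.
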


\begin{proof}
Directly from Theorem \ref{theoFr}, as $T\stp K_r$ contains a subgraph $F_r$.
\end{proof}

\subsection{The class $G_r$}\label{secGr}
Consider an equilateral triangle and subdivide each side into $r-1$ equal segments. Through these points draw line segments  parallel to the sides of the triangle. This construction creates a triangulation of the big triangle into $(r-1)^2$ congruent equilateral triangles.  The graph $G_r$ corresponds to the edge graph of this triangulation. The graph $G_5$ is illustrated in Figure~\ref{fig:t5}. 

 \begin{figure}[h]
\bc  \includegraphics[scale=0.5]{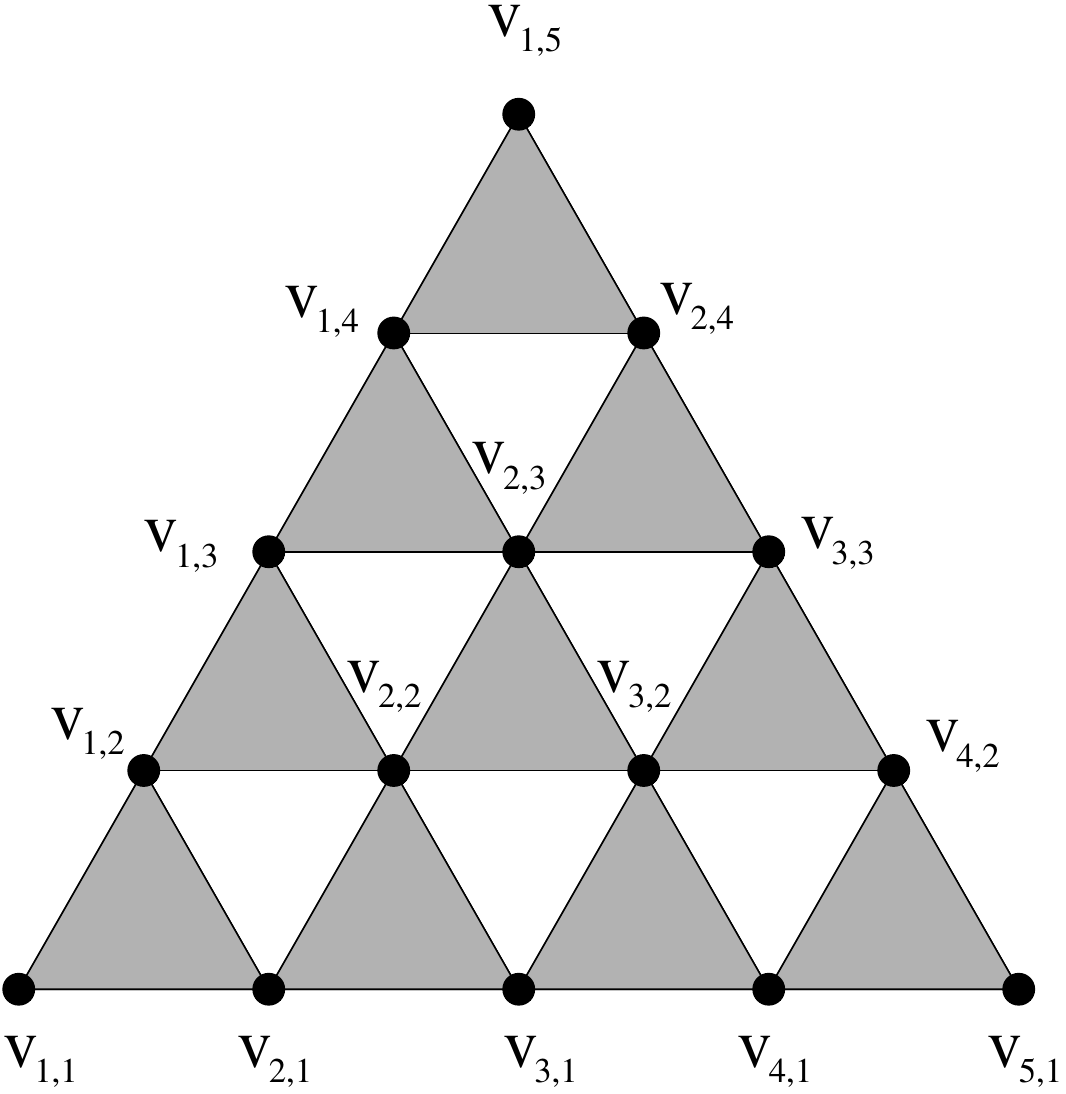}
 \caption{The graph $G_5$.}\label{fig:t5}
\ec
 \end{figure}

The graph $G_r$  has $\binom{r+1}{2}$ vertices,  denoted as  $v_{i,l}$ for $l\in [r]$ and $i\in [r-l+1]$ (with $v_{1,l},\ldots,v_{r-l+1,l}$ at level $l$, see Figure \ref{fig:t5}).
Note that $G_2=K_3=F_2$, $G_3=F_3$, but $G_r\ne F_r$ for  $r\ge 4$.
Using  the following lemma we can construct some points of $\EE(G_r)$ with a unique completion.
 
 \begin{lemma}\label{claim0}
Consider a labeling of the nodes of $G_r$ by vectors $w_{i,l}$ satisfying the following property $(P_r)$:
For each triangle 
$C_{i,l}=\{v_{i,l}, v_{i+1,l}, v_{i,l+1}\}$ of $G_r$, 
the set 
$\{w_{i,l},w_{i+1,l},w_{i,l+1}\}$   is minimally linearly dependent. (These triangles are shaded in Figure \ref{fig:t5}).
Let $X$ be the Gram matrix of the vectors $w_{i,l}$ and let   $x=\pi_{E(G_r)}(X)$ be its projection. Then $X$ is  the unique completion of $x$.
\end{lemma}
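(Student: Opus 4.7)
The plan is to prove that $\fib(x)=\{X\}$ by showing that any psd completion $Y$ of $x$ must coincide with $X$. Write $Y$ via a Gram representation $\{u_{i,l}\}$, so that $\langle u_{i,l}, u_{j,l'}\rangle = \langle w_{i,l}, w_{j,l'}\rangle$ on every edge of $G_r$; the task is to extend this equality to every non-edge.

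The key input is Lemma \ref{lemforce}. For each upward triangle $C_{i,l}$, property $(P_r)$ implies that the $3\times 3$ matrix $x[C_{i,l}]$ is singular (the triple is linearly dependent), while every $2\times 2$ principal submatrix is nonsingular (any two of $w_{i,l}, w_{i+1,l}, w_{i,l+1}$ are linearly independent, by minimality of the dependence). This is exactly the hypothesis of Lemma \ref{lemforce}: whenever a vertex $v\notin C_{i,l}$ is adjacent in $G_r$ to exactly two of the three vertices of $C_{i,l}$, the value of $Y$ at the non-edge joining $v$ to the third vertex is forced by $x$. In a first round of forcing, each $C_{i,l}$ yields up to three such non-edges. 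For instance, when $v_{i+1,l+1}$ exists it is adjacent to $v_{i+1,l}$ and $v_{i,l+1}$ but not to $v_{i,l}$, so $C_{i,l}$ forces the non-edge $\{v_{i,l}, v_{i+1,l+1}\}$; symmetrically, the choices $v_{i+1,l-1}$ and $v_{i-1,l+1}$ force two further non-edges whenever those vertices belong to $G_r$.

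The plan is then to iterate: each already-forced non-edge is adjoined to the graph as a new edge, and Lemma \ref{lemforce} is reapplied to the singular cliques $C_{i,l}$ in the enlarged graph. A vertex originally adjacent to only one vertex of some $C_{i,l}$ may now be effectively adjacent to a second, producing a fresh forced non-edge. The main obstacle is to verify that this procedure terminates with every non-edge of $G_r$ pinned down. I would establish this by induction on the graph-distance in $G_r$ between the two endpoints of the non-edge, exploiting the regular triangular structure of $G_r$ to exhibit, at each inductive step, a singular upward triangle in the augmented graph that provides the required forcing configuration. Once every non-edge is forced, $Y$ and $X$ agree entry-wise, giving $\fib(x)=\{X\}$ as claimed.
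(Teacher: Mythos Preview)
Your setup is correct and matches the paper's: property $(P_r)$ makes each $x[C_{i,l}]$ singular with nonsingular $2\times 2$ principal minors, so Lemma~\ref{lemforce} applies, and iterated forcing is indeed the engine. The first-round forcings you list are also correct.

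The gap is that you do not actually carry out the induction on graph-distance; you only assert that ``the regular triangular structure'' will supply a suitable singular triangle at each step. This is the entire content of the lemma, and it is not automatic: for a non-edge $\{u,v\}$ at distance $d$, you must exhibit a specific $C_{i,l}$ two of whose vertices are already known to be ``adjacent'' to $v$ in the augmented graph, and check that the third vertex really is $u$ (or leads to $u$). Doing this uniformly in $d$ requires a careful bookkeeping of which non-edges have been forced so far, and the case analysis is not as symmetric as one might hope.

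The paper sidesteps this by inducting on $r$ rather than on distance. The observation is that deleting any one of the three boundary sides of $G_r$ leaves a copy of $G_{r-1}$, and the restricted labeling still satisfies $(P_{r-1})$. By induction, every non-edge contained in one of these three sub-triangles is already determined. The only non-edges not covered are those joining a corner of $G_r$ to a vertex on the opposite side, and these are dispatched with two explicit applications of Lemma~\ref{lemforce}. This structural decomposition replaces an open-ended distance induction with a finite case check, and is considerably cleaner to write out.
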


\begin{proof}For $r=2$, $G_2=K_3$ and there is nothing to prove.
Let $r\ge 3$ and assume that the claim holds for $r-1$. Consider a labeling $w_{i,l}$ of  $G_r$ satisfying  $(P_r)$  and the corresponding vector $x\in \EE(G_r)$. We show, using Lemma \ref{lemforce},  that the entries $Y_{uv}$ of a psd completion $Y$ of $x$ are uniquely determined for all 
$\{u,v\}\not\in E(G_r)$. 
For this, denote by $H, R, L$ the sets of nodes lying on 
the `horizontal' side, the `right' side and the `left' side of $G_r$, respectively (refer to the drawing of $G_r$ of Figure \ref{fig:t5}).
Observe that each of $G_r\backslash H$, $G_r\backslash R$, $G_r\backslash L$ is a copy of $G_{r-1}$. As the induced vector labelings on each of these graphs satisfies the property $(P_{r-1})$, we can conclude  using the induction assumption  that the entry $Y_{uv}$ is uniquely determined whenever the pair $\{u,v\}$ is contained in the vertex set of one of  $G_r\backslash H$, $G_r\backslash R$, or $G_r\backslash L$.
The only non-edges $\{u,v\}$   that are not yet covered arise when $u$ is a corner of $G_r$ and $v$ lies on the opposite side, say $u=v_{1,1}$ and $v=v_{r-l+1,l}\in R$.
If $l\ne 1,r$ then the clique $C_{1,1}=\{v_{1,1},v_{2,1},v_{1,2}\}$ forces the pair $\{u,v\}$ (since 
$\{v,v_{1,2}\}\subseteq E(G_r\backslash H)$ and $\{v,v_{2,1}\}\subseteq E(G_r\backslash L)$). 
If $l=r$ then the clique $C_{1,r-1}=\{v_{1,r-1},v_{2,r-1},v_{1,r}\}$ forces the pair $\{u,v\}$ (since 
$\{u,v_{1,r-1}\}\subseteq E(G_r\backslash R)$ and the value at the pair $\{u,v_{2,r-1}\}$ has just been specified).
Analogously for the case $l=1$. 
This concludes the proof. 
 \end{proof}

 \begin{theorem}
We have that  $\egd(G_r)=r$ for all $r\ge 2$. Moreover,  $G_r$ is a minimal forbidden minor for the class $\GG_{r-1}$.
 \end{theorem}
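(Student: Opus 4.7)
The bound $\egd(G_r)\le r$ is immediate from Lemma \ref{lem:uboundegd}, since $|V(G_r)|=\binom{r+1}{2}$. For the matching lower bound $\egd(G_r)\ge r$, my plan is to construct a labeling of $V(G_r)$ by vectors $w_{i,l}\in\oR^r$ satisfying property $(P_r)$ such that, after normalization to unit vectors, the resulting Gram matrix $X$ is an extreme point of $\EE_n$ of rank $r$. Then Lemma \ref{claim0} gives $\fib(x)=\{X\}$ for $x=\pi_{E(G_r)}(X)$, and Lemma \ref{lem:basic} yields $\egd(G_r)\ge r$.

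For the construction, label the bottom row by $w_{i,1}:=e_i$ for $i\in[r]$ (the standard basis of $\oR^r$) and propagate upward via the Pascal-triangle rule $w_{i,l+1}:=w_{i,l}+w_{i+1,l}$, so that $w_{i,l}=\sum_{k=0}^{l-1}\binom{l-1}{k}e_{i+k}$. Property $(P_r)$ is immediate from the recursion, and the three vectors of each shaded triangle are pairwise linearly independent because their supports differ. The rank equals $r$ since the bottom row already spans $\oR^r$. For extremality I invoke \eqref{eqdimextreme0}: one checks by induction on $l$ that the $\binom{r+1}{2}$ rank-one matrices $w_{i,l}w_{i,l}^\sfT$ span $\SSS^r$. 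At step $l=1$ they give all diagonals $e_ie_i^\sfT$; at each step $l\ge 2$ the matrix $w_{i,l}w_{i,l}^\sfT$ has its ``outermost'' entry at position $(i,i+l-1)$ equal to $1$, while all closer off-diagonal entries are already spanned by the induction hypothesis, so one can isolate the new basis element $e_ie_{i+l-1}^\sfT+e_{i+l-1}e_i^\sfT$. A final rescaling $w_{i,l}\mapsto w_{i,l}/\|w_{i,l}\|$ preserves $(P_r)$, the rank, and the extremality.

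For the minimality, by minor-monotonicity (Lemma \ref{lemminor}) it suffices to check $\egd(M)\le r-1$ when $M$ is obtained from $G_r$ by deleting a single vertex, deleting a single edge, or contracting a single edge. The vertex-deletion and edge-contraction cases are handled directly by Lemma \ref{lem:uboundegd}, since $|V(M)|=\binom{r+1}{2}-1$. The nontrivial case is edge deletion $M=G_r\setminus e$, which still has $\binom{r+1}{2}$ vertices. When $e$ is incident to a corner of $G_r$ (a vertex of degree $2$), the graph $G_r\setminus e$ is a clique $1$-sum of $K_2$ with $G_r$ minus that corner, and the bound $\egd(G_r\setminus e)\le r-1$ follows from Lemma \ref{lemcliquesum} combined with Lemma \ref{lem:uboundegd} applied to the smaller summand. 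For any other edge $e$, the plan is to exhibit a tree $T$ such that $G_r\setminus e$ is a subgraph of $T\stp K_{r-1}$, at which point Corollary \ref{cor:egdlesla} gives the desired bound.

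The main obstacle is producing these tree decompositions for the non-corner edge-deletion case. The plan, modelled on the second case of the proof of Theorem \ref{theoFr}, is to partition $V(G_r)$ into parts of size at most $r-1$ arranged as a path, chosen so that the removed edge $e$ is absorbed into a single part while every surviving edge of $G_r\setminus e$ stays inside a single part or between two consecutive parts. The partition has to be adapted to the three-fold symmetry of $G_r$ and to whether $e$ lies on the outer boundary or strictly inside the triangulation; it will be the main case-analytic piece of the argument.
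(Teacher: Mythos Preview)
Your lower-bound argument is essentially the paper's: the same Pascal-triangle labeling (your unnormalized vectors and the paper's step-by-step normalized ones give the same unit vectors, since all vectors at a fixed level have equal norm), the same appeal to Lemma~\ref{claim0} for uniqueness, and the same extremality criterion~\eqref{eqdimextreme0}. Your inductive proof that the matrices $w_{i,l}w_{i,l}^\sfT$ span $\SSS^r$ (via the outermost off-diagonal entry at position $(i,i+l-1)$) is more explicit than the paper's one-line assertion and is correct.

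Where you diverge is in the minimality argument, and that is where your proposal has a genuine gap. The paper does not attempt any direct $T\stp K_{r-1}$ embedding for $G_r\setminus e$. Instead it invokes the chain
\[
\egd(G_r\setminus e)\le\sla(G_r\setminus e)\le\lda(G_r\setminus e)\le r-1,
\]
the first inequality being Corollary~\ref{cor:egdlesla}, the second Lemma~\ref{lem:bound}, and the last being Kotlov's result~\cite{Ko00}. This one citation disposes of every edge $e$ simultaneously. By contrast, your plan for the non-corner edge-deletion case is only a plan: you acknowledge it as ``the main case-analytic piece of the argument'' and give no construction. This is not obviously routine, since $G_r$ has ${r+1\choose 2}$ vertices and a genuinely two-dimensional incidence structure, so packing $G_r\setminus e$ into a path of bags of size $r-1$ that respects all surviving adjacencies requires a nontrivial decomposition adapted to the position of $e$. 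Rather than carry out that case analysis, you can close the argument cleanly by citing the bound $\lda(G_r\setminus e)\le r-1$ and using $\egd\le\sla\le\lda$.
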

 
 \begin{proof}Since $G_r$ has $\binom{r+1}{2}$ nodes it follows from Lemma \ref{lem:uboundegd} that $\egd(G_r)\le r$. We now  show  that $\egd(G_r)\ge r$. 
For this, choose a vector labeling of the nodes of $G_r$ satisfying the conditions of Lemma \ref{claim0}: 
Label the nodes $v_{1,1},\ldots,v_{r,1}$ at level $l=1$ by the standard unit vectors $w_{1,1}=e_1,\ldots,w_{r,1}=e_r$ in $\oR^r$ and 
define inductively  $w_{i,l+1}= {w_{i,l}+w_{i+1,l}\over \|w_{i,l}+w_{i+1,l}\|}$ for $l=1,\ldots,r-1$.
By Lemma \ref{claim0} their Gram matrix $X$ is the unique completion of its projection $x=\pi_{E(G_r)}(X)\in \EE(G_r)$. 
Moreover, $X$ is extreme in $\EE_n$ since $\UU_V$ is full-dimensional in $\SSS^r$. This shows $\egd(G_r)\ge r$, by Lemma \ref{lem:basic}.

We now show  
 that  $\egd(G_r\backslash e)\le r-1$. For this use the following inequalities: 
 $\egd(G_r\backslash e)\le \sla(G_r\backslash e)\le  \lda(G_r\backslash e) \le r-1$, where  the leftmost inequality follows from Corollary~\ref{cor:egdlesla} and the rightmost  one is shown in \cite{Ko00}. 
    \end{proof}

 

We conclude with two immediate corollaries.

   \begin{corollary} The graph parameter $\egd(G)$ is unbounded for the class of planar graphs.
\end{corollary}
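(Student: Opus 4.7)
The plan is to simply observe that the graphs $G_r$ constructed in Section~\ref{secGr} do the job. By construction, $G_r$ is the edge graph of a triangulation of an equilateral triangle in the Euclidean plane, obtained by drawing line segments parallel to the sides through the subdivision points. This is a plane drawing with no edge crossings, so $G_r$ is planar for every $r \ge 2$.

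From the preceding theorem we have $\egd(G_r) = r$ for all $r \ge 2$. Hence $\sup_r \egd(G_r) = +\infty$ while each $G_r$ belongs to the class of planar graphs. This shows that $\egd(\cdot)$ is not bounded by any constant on the class of planar graphs, which is exactly the statement of the corollary.

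There is really no obstacle here: the only thing to notice is planarity of $G_r$, which is immediate from its definition as the edge graph of a planar triangulation, and then the unboundedness is an immediate consequence of the previous theorem. So the proof is a one-line deduction and can be written as: ``By the previous theorem, $\egd(G_r)=r$, and each $G_r$ is planar since it is the edge graph of a planar triangulation; hence $\egd(\cdot)$ is unbounded on the class of planar graphs.''
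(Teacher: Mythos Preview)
Your proof is correct and takes essentially the same approach as the paper: the corollary is stated immediately after the theorem establishing $\egd(G_r)=r$, and the (implicit) proof is precisely that each $G_r$ is planar by construction, so the sequence $(G_r)_{r\ge 2}$ witnesses unboundedness.
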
  
 
\begin{corollary}
Let $T$ be a tree which contains a path with $2r-2$ nodes. 
Then, $\egd(T\stp K_r)=r$.
\end{corollary}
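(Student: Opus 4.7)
The upper bound $\egd(T\stp K_r)\le r$ is immediate from Theorem~\ref{theomainr}, so the only real work is to establish the matching lower bound $\egd(T\stp K_r)\ge r$. Since $\egd$ is minor-monotone (Lemma~\ref{lemminor}) and it has already been shown that $\egd(G_r)=r$, it will be enough to exhibit $G_r$ as a minor---in fact, as a subgraph---of $T\stp K_r$.

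A path on $2r-2$ nodes contains the path $P_r$ on $r$ nodes for every $r\ge 2$, so the hypothesis on $T$ implies that $T$ contains $P_r$ as a subgraph and hence that $P_r\stp K_r$ is a subgraph of $T\stp K_r$. The node set of $P_r\stp K_r$ is $[r]\times[r]$, with distinct nodes $(c,d)$ and $(c',d')$ adjacent if and only if $|c-c'|\le 1$. I would embed $G_r$ as a subgraph of $P_r\stp K_r$ via the skewed map
\[
\phi(v_{i,l})=(l+i-1,\,l),\qquad l\in[r],\ i\in[r-l+1].
\]
This is well-defined since $l+i-1\in[l,r]$, and injective since the second coordinate recovers $l$ and then the first recovers $i$.

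To check that $\phi$ preserves edges, it suffices to look at the three edges of each defining triangle $C_{i,l}=\{v_{i,l},v_{i+1,l},v_{i,l+1}\}$ of $G_r$: their images $(l+i-1,l)$, $(l+i,l)$ and $(l+i,l+1)$ pairwise differ in the first coordinate by at most $1$, so they form a triangle in $P_r\stp K_r$. Since $E(G_r)$ is covered by these triangles, $\phi$ is a subgraph embedding, and $G_r$ is therefore a minor of $T\stp K_r$. Combined with $\egd(G_r)=r$ (shown earlier in this section) and the upper bound from Theorem~\ref{theomainr}, this yields $\egd(T\stp K_r)=r$.

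There is no serious obstacle: the single idea required is spotting the skewed embedding $\phi$; once written down, the three edge types of each $C_{i,l}$ are verified by inspection. One sees incidentally that the hypothesis $2r-2$ could be relaxed to $r$, but the form stated is convenient and sufficient.
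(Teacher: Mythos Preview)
Your proof is correct, but it takes a genuinely different route from the paper. The paper does not construct an embedding of $G_r$ into a strong product directly; instead it quotes a result of Colin de Verdi\`ere \cite{V98} stating that $G_r$ is a minor of the Cartesian product $P_r\,\square\,P_{2r-2}$, and then observes the chain
\[
G_r \preceq P_{2r-2}\,\square\,P_r \preceq P_{2r-2}\stp P_r \preceq T\stp K_r.
\]
This is exactly why the hypothesis singles out a path on $2r-2$ nodes: that length is what the cited Cartesian-product embedding requires. Your explicit skewed map $\phi(v_{i,l})=(l+i-1,l)$ into $P_r\stp K_r$ is self-contained, avoids the external citation, and---as you noticed---shows the stronger fact that a path on only $r$ nodes already suffices. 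The paper's approach, on the other hand, recycles a known structural fact and keeps the proof to a single line. Both arguments rest on the same two ingredients ($\egd(G_r)=r$ and minor-monotonicity), differing only in how the minor relation $G_r\preceq T\stp K_r$ is established.
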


\begin{proof}It is shown in \cite{V98} that $G_r$ is a minor of the Cartesian product of  two paths $P_r$ and  $P_{2r-2}$ (with, respectively, $r$ and $2r-2$ nodes). Hence,
$G_r \preceq P_{2r-2} \Box P_{r}\preceq T\stp K_r$ and thus $r=\egd(G_r)\le \egd(T\stp K_r)$.
\end{proof}

\subsection{The class $H_r$}\label{secHr}
In this section we consider a third class of graphs  $H_r$ for every  $r\ge 3$. In order to explain the   general definition we first describe the base case $r=3$. 

\begin{figure}[h!]
\bc  \includegraphics[scale=0.45]{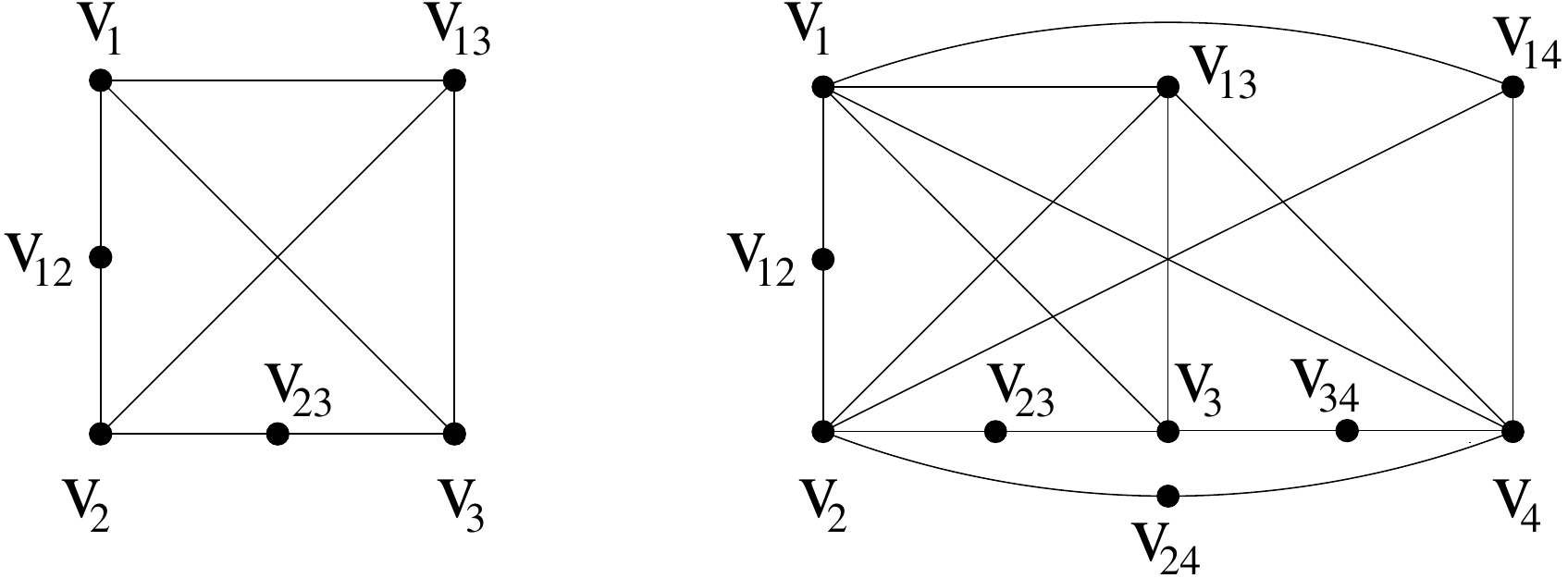}
 \caption{The graphs $H_3$ and $H_4$.}\label{fig:h3h4}
\ec
 \end{figure}The graph $H_3$ is shown in Figure~\ref{fig:h3h4}. It is obtained by taking a complete graph $K_4$, with vertices  $v_1,v_2, v_3$ and $v_{13}$,   and subdividing two adjacent edges: here we insert node $v_{12}$ between $v_1$ and $v_2$ and node $v_{23}$ between nodes $v_2$ and~$v_3$. 

\begin{lemma}\label{lem:H3}
$\egd(H_3)=3$ and  $H_3$ is a minimal forbidden minor for $\GG_2$.
\end{lemma}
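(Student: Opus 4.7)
The plan is to handle the upper bound via Lemma~\ref{lem:uboundegd}, the lower bound via Lemma~\ref{lem:basic}, and minimality via Corollary~\ref{cor:egdlesla}. The upper bound $\egd(H_3)\le 3$ is immediate from Lemma~\ref{lem:uboundegd} since $|V(H_3)|=6=\binom{4}{2}$. For the matching lower bound $\egd(H_3)\ge 3$, I would exhibit $x\in\EE(H_3)$ with a unique psd completion $X\in\ext\EE_6$ of rank~$3$ and invoke Lemma~\ref{lem:basic}. Taking the rank-$3$ extreme point of Example~\ref{example}, label the nodes by
$$v_1\mapsto e_1,\ v_3\mapsto e_2,\ v_{13}\mapsto\tfrac{e_1+e_2}{\sqrt{2}},\ v_2\mapsto e_3,\ v_{12}\mapsto\tfrac{e_1+e_3}{\sqrt{2}},\ v_{23}\mapsto\tfrac{e_2+e_3}{\sqrt{2}},$$
let $X$ be their Gram matrix and $x=\pi_{E(H_3)}(X)$; by Example~\ref{example}, $X\in\ext\EE_6$ with $\rankspace X=3$.

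The core step is to prove $\fib(x)=\{X\}$. I will start from an arbitrary Gram representation $u_1,u_2,u_3,u_{12},u_{13},u_{23}$ of a completion. Since $u_1,u_3$ are orthogonal unit vectors and $u_{13}$ is a unit vector with $u_1\cdot u_{13}=u_3\cdot u_{13}=1/\sqrt{2}$, projection onto $\lan u_1,u_3\ra$ forces $u_{13}=(u_1+u_3)/\sqrt{2}$; the edge $\{v_{13},v_2\}$ with $u_{13}\cdot u_2=0$ then yields the linear relation $u_1\cdot u_2+u_3\cdot u_2=0$. The combinatorial forcing of Lemma~\ref{lemforce} is not available, as $\{v_1,v_3,v_{13}\}$ is the only triangle of $H_3$ and no node is adjacent to two of its vertices. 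Instead I will extract a positivity constraint from the subdivision path through $v_{12}$: writing $u_{12}=\alpha u_1+\beta u_2+w$ with $w\perp u_1,u_2$, the conditions $u_1\cdot u_{12}=u_2\cdot u_{12}=1/\sqrt{2}$ and $\|u_{12}\|=1$ yield $\|w\|^2=\tfrac{u_1\cdot u_2}{1+u_1\cdot u_2}$, forcing $u_1\cdot u_2\ge 0$. The analogous computation at $v_{23}$ gives $u_3\cdot u_2\ge 0$. Combined with the linear relation, both inner products must vanish, so $u_{12}=(u_1+u_2)/\sqrt{2}$ and $u_{23}=(u_2+u_3)/\sqrt{2}$ are uniquely determined, and all remaining inner products match those of $X$.

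For minimality, the task is to verify $\egd(H_3\setminus e)\le 2$ and $\egd(H_3/e)\le 2$ for every edge $e$; by Corollary~\ref{cor:egdlesla}, it suffices to show $\sla\le 2$ in each case. The automorphism of $H_3$ interchanging $v_1\leftrightarrow v_3$ and $v_{12}\leftrightarrow v_{23}$ partitions the eight edges into five orbits, and for one representative of each orbit I will exhibit the resulting graph as a minor of $T\stp K_2$ for a suitable tree~$T$; for instance, contracting $\{v_1,v_3\}$ yields $K_{2,3}$, which is easily seen to be a minor of $P_3\stp K_2$.

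The hard part will be the uniqueness step, because the standard iterative forcing does not apply in the absence of multiple triangles. The key new ingredient is to combine a linear kernel constraint from the singular triangle $\{v_1,v_3,v_{13}\}$ with nonlinear sign constraints from the two subdivision paths, exploiting both psd-ness and the unit-norm condition at the subdivision nodes.
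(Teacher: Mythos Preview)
Your proposal is correct and follows essentially the same argument as the paper. The paper phrases the uniqueness step via principal-minor determinants rather than Gram vectors: $\det Y[\{v_1,v_2,v_3,v_{13}\}]=-(a+b)^2/2$ yields your linear relation $a+b=0$, and $\det Y[\{v_1,v_2,v_{12}\}]=a(1-a)$ yields your sign constraint $a\ge 0$; these are exactly your geometric conditions recast as psd inequalities on principal submatrices. One point worth noting: once $a=b=0$ is established, Lemma~\ref{lemforce} \emph{is} available and the paper uses it --- the now-determined entries at $\{v_1,v_2\}$ and $\{v_2,v_3\}$ create singular triangles $\{v_1,v_2,v_{12}\}$ and $\{v_2,v_3,v_{23}\}$, which then iteratively force all remaining non-edges; your direct Gram-vector computation is an equivalent alternative. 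For minimality, the paper dispatches all contractions in one line via the $5$-node bound~\eqref{eq:upperbound} rather than routing through $\sla$, which avoids the case analysis you sketch; the deletion cases are handled just as you propose.
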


\begin{proof}
As $H_3$ has 6 nodes,  $\egd(H_3)\le 3$.  To show equality, we use the following vector labeling for the nodes of $H_3$:
Label the nodes $v_1,v_2,v_3$ by the standard unit vectors $e_1,e_2,e_3\in \oR^3$ and $v_{ij}$ by $(e_i+e_j)/\sqrt 2$ for $1\le i<j\le 3$.
Let $X\in \EE_6$ be their Gram matrix and set $x=\pi_{E(H_3)}(X)\in \EE(H_3)$.
Then $X$ has rank 3 and  $X$ is an extreme point of $\EE_6$. We now show that $X$ is the unique completion of $x$ in $\EE_6$.
For this let $Y\in \fib(x)$.
Consider its principal submatrices  $Z,Z'$ indexed by  $\{v_1,v_2,v_3,v_{13}\}$ and $\{v_1,v_2,v_{12}\}$, of  the form:
$$ Z=\left(\begin{matrix}
1 & a &0 & \sqrt{2}/2\cr
a& 1 & b  & 0\cr
0 &b  & 1& \sqrt{2}/2\cr
\sqrt{2}/2 & 0& \sqrt{2}/2& 1
\end{matrix}\right)\qquad
Z'=\left(\begin{matrix}
1 & a & \sqrt{2}/2\cr
a & 1  & \sqrt{2}/2\cr
\sqrt{2}/2 & \sqrt{2}/2 & 1
\end{matrix}\right),
$$
where $a,b\in\oR$.  Then, $\det(Z)=-(a+b)^2/2$  implies $a+b=0$, and  $\det(Z')=a(1-a)$ implies $a\ge0$. Similarly, $b\ge 0$ using the principal submatrix of $Y$ indexed by $\{v_2,v_3,v_{23}\}$. This shows $a=b=0$ and thus
%
 the entries of $Y$ at the positions $\{v_1,v_2\}$ and $\{v_2,v_3\}$ are uniquely specified. Remains to show that the entries are uniquely specified at the non-edges containing $v_{12}$ or $v_{23}$. 
For this we use Lemma \ref{lemforce}: First the clique $\{v_2,v_3,v_{23}\}$  forces the pairs $\{v_1,v_{23}\}$ and $\{v_{13},v_{23}\}$ and then the clique 
 $\{v_1,v_2,v_{12}\}$ forces the pairs $\{v_{23},v_{12}\}$,  $\{v_{13},v_{12}\}$, and $\{v_3,v_{12}\}$.
  Thus we have shown $Y=X$, which concludes the proof that $\egd(H_3)=3$.

We now verify that it is a minimal forbidden minor. Contracting any edge results in a graph on 5 nodes and we are done by~\eqref{eq:upperbound}. Lastly, we verify that  
$\egd(H_3\backslash  e)\le 2$ for each edge  $e\in E(H_3)$. If deleting the edge $e$  creates a cut node, then the result follows using Lemma \ref{lemcliquesum}. 
Otherwise,  $H_3 \backslash e$ is contained in $T\stp K_2$, where $T$ is a path  (for $e= \{v_2,v_{13}\}$) or a claw $K_{1,3}$ (for  $e=\{v_1,v_{13}\}$ or $\{v_3,v_{13}\}$), and the result follows from Theorem \ref{theomainr}.
\end{proof}

We now describe the graph $H_r$, or rather a class $\HH_r$ of such graphs.
Any graph $H_r\in \HH_r$ is constructed in the following way.
Its node set is $V=V_0\cup V_3\cup \ldots \cup V_r$, where 
$V_0=\{v_{ij}: 3\le i<j\le r\}$ and, for $i\in \{3,\ldots,r\}$,
$V_i=\{v_1,v_2,v_{12}, v_i,v_{1i}, v_{2i}\}$. So $H_r$ has $n={r+1\choose 2}$ nodes.
Its edge set is defined as follows: On each set $V_i$ we put a copy of $H_3$ 
 (with index $i$ playing the role of index 3 in the description of $H_3$ above) and, for each $3\le i<j\le r$, we have the edges $\{v_i,v_{ij}\}$ and $\{v_j,v_{ij}\}$ as well as exactly one edge, call it $e_{ij}$, from the~set
\begin{equation}\label{setF}
F_{ij}=\{\{v_i,v_j\},\{v_i,v_{1j}\}, \{v_j,v_{1i}\},\{v_{1i},v_{1j}\}\}.
\end{equation}
Figure \ref{fig:h3h4} shows the graph $H_4$ for the choice $e_{34}=\{v_4,v_{13}\}$.

\begin{theorem}
For any graph $H_r\in \HH_r$ ($r\ge 3$), $\egd(H_r)=r$.
\end{theorem}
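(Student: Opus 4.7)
The upper bound $\egd(H_r)\le r$ is immediate from $|V(H_r)|=\binom{r+1}{2}$ via Lemma~\ref{lem:uboundegd}. For the matching lower bound my plan is to apply Lemma~\ref{lem:basic}: label $v_i\mapsto e_i$ for $i\in [r]$ and $v_{ij}\mapsto (e_i+e_j)/\sqrt{2}$ for $1\le i<j\le r$, let $X$ be the resulting Gram matrix in $\EE_n$ and set $x=\pi_{E(H_r)}(X)$. By Example~\ref{example}, $X$ has rank $r$ and is an extreme point of $\EE_n$ (the associated $\UU_V$ is full-dimensional in $\SSS^r$), so it suffices to prove that $X$ is the unique psd completion of $x$.

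I would prove uniqueness in three phases. \emph{Phase~1:} For each $i\in\{3,\ldots,r\}$, the induced subgraph on $V_i$ is a copy of $H_3$ whose six vertex labels all lie in the 3-dimensional subspace $\la e_1,e_2,e_i\ra$ of $\oR^r$; applying Lemma~\ref{lem:H3} to this restriction gives $Y[V_i]=X[V_i]$ for any $Y\in\fib(x)$. \emph{Phase~2:} For each pair $3\le i<j\le r$ show that $Y_{v_i,v_j}=0$ is forced. \emph{Phase~3:} Once Phase~2 is done, the triangle $\{v_i,v_j,v_{ij}\}$ becomes a singular clique with kernel proportional to $(1,1,-\sqrt{2})^\sfT$ while its subclique $\{v_i,v_j\}$ remains nonsingular, and analogous singular triangles such as $\{v_1,v_j,v_{1j}\}\subseteq V_j$ live inside each $V_k$; iteratively applying Lemma~\ref{lemforce} (first to the cross-pairs between $V_i\setminus(V_i\cap V_j)$ and $V_j\setminus(V_i\cap V_j)$ using singular triangles inside $V_i$ or $V_j$, and then to the pairs $\{u,v_{ij}\}$ with $u$ adjacent in the extended graph to both $v_i$ and $v_j$) chain-forces all remaining entries, yielding $Y=X$.

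The crux is Phase~2, which splits into four cases according to $e_{ij}\in F_{ij}$. The case $e_{ij}=\{v_i,v_j\}$ is immediate. For $e_{ij}=\{v_i,v_{1j}\}$ (and symmetrically $\{v_{1i},v_j\}$) I would examine the $5\times 5$ principal submatrix of $Y$ on $\{v_1,v_2,v_i,v_{1j},v_j\}$: Phase~1 together with the edge value $Y_{v_i,v_{1j}}=0$ specifies every entry except $c=Y_{v_i,v_j}$, and a cofactor expansion along the $v_2$-row (only one nonzero entry) reduces to a $4\times 4$ determinant equal to $-c^2/2$, so the PSD condition forces $c=0$. The most delicate case is $e_{ij}=\{v_{1i},v_{1j}\}$, which gives the known value $Y_{v_{1i},v_{1j}}=1/2$. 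Here I would first force $Y_{v_{1i},v_j}=Y_{v_{1j},v_i}=0$ by applying the same idea to the $4\times 4$ submatrices on $\{v_1,v_{1i},v_{1j},v_j\}$ and $\{v_1,v_{1i},v_{1j},v_i\}$: their common top-left $3\times 3$ block is the positive definite Gram matrix of $e_1,(e_1+e_i)/\sqrt{2},(e_1+e_j)/\sqrt{2}$ with determinant $1/4$, and a Schur-complement computation (interpreting its action on the off-diagonal block as a Gram computation of vectors in $\la e_1,e_i,e_j\ra$) yields determinant $-c_1^2/2$, so PSD gives $c_1=0$. After these two entries are fixed, the $5\times 5$ submatrix on $\{v_1,v_{1i},v_{1j},v_i,v_j\}$ has only $c=Y_{v_i,v_j}$ unspecified; its Schur complement on the same $3\times 3$ block simplifies to $\left(\begin{smallmatrix}0 & c \\ c & 0\end{smallmatrix}\right)$ (since $e_i\perp e_j$ in $\la e_1,e_i,e_j\ra$), giving determinant $-c^2/4$ and hence $c=0$.
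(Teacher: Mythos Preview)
Your argument is correct and follows the same overall scheme as the paper: same vector labeling, same appeal to Example~\ref{example} for extremality, same use of Lemma~\ref{lem:H3} on each $V_i$, and the same singular-triangle forcing for Phase~3.

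The one place where you work harder than necessary is Phase~2. Your determinant and Schur-complement computations are all correct (I checked the $-c^2/2$, $-c_1^2/2$, and $-c^2/4$ values), but the paper avoids them entirely: once Phase~1 is done, the value $Y_{v_i,v_j}$ can already be forced \emph{purely via Lemma~\ref{lemforce}} using the singular triangles $C_{1i}=\{v_1,v_i,v_{1i}\}$ and $C_{1j}=\{v_1,v_j,v_{1j}\}$ that live inside $V_i$ and $V_j$. Concretely, if $e_{ij}=\{v_i,v_{1j}\}$ then $v_i$ is adjacent to both $v_1$ and $v_{1j}$, so $C_{1j}$ forces $\{v_i,v_j\}$ in one step; if $e_{ij}=\{v_{1i},v_{1j}\}$ then $C_{1j}$ first forces $\{v_{1i},v_j\}$ (since $v_{1i}$ is adjacent to $v_1$ and to $v_{1j}$), after which $C_{1i}$ forces $\{v_i,v_j\}$. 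The remaining two cases are immediate or symmetric. So the paper's terse sentence ``all remaining non-edges are forced using these sets $C_{ij}$'' is meant to include the triangles $C_{1k},C_{2k}$ as well, and subsumes your Phase~2 without any explicit linear algebra. Your route has the advantage of being self-contained and verifiable by direct calculation; the paper's buys brevity and uniformity by reusing a single forcing mechanism throughout.
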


\begin{proof}We label the nodes $v_1,\ldots,v_r$ by  $e_1,\ldots,e_r\in \oR^r$ and  $v_{ij}$ by $(e_i+e_j)/\sqrt 2$. 
Let $X\in \EE_n$ be their Gram matrix and  $x=\pi_{E(H_r)}(X)\in \EE(H_r)$.
Then $X$ is an extreme point of $\EE_n$, 
we show that $\fib(x)=\{X\}$. For this let $Y\in \fib(x)$. We already know that $Y[V_i]=X[V_i]$  for each $i\in \{3,\ldots,r\}$. 
Indeed, as the subgraph of $H_r$ induced by $V_i$ is $H_3$, this  follows from the way we have chosen the labeling and from the proof of Lemma \ref{lem:H3}. Hence we may now assume that we have  a complete graph on each $V_i$ and it 
remains to show that the entries of $Y$ are uniquely specified at the non-edges that are not contained in some set $V_i$ ($3\le i\le r$).
For this note that   the vectors labeling the set $C_{ij}=\{v_i,v_j,v_{ij}\}$ are minimally linearly dependent.
Using Lemma \ref{lemforce}, one can verify that 
%
all remaining non-edges are forced using these sets $C_{ij}$ and thus $Y=X$.  This shows that $\egd(H_r)\ge r$. \end{proof}

In contrast to the graphs  $F_r$ and $G_r$,  we do not know whether $H_r\in \HH_r$ is a {\em minimal} forbidden minor for $\GG_{r-1}$ for $r\ge 4$.

\subsection{Two special graphs: $K_{3,3}$ and $K_5$ }\label{secK33}
 
In this section we consider the graphs $K_{3,3}$ and $K_5$ which will play a special role in the characterization of the class $\GG_2$. 
First we compute the extreme Gram dimension of  $K_{3,3}$. Note that  its  Gram dimension is $\gd(K_{3,3})=4$ as $K_{3,3}$ contains a $K_4$-minor but it contains no $K_5$ and $K_{2,2,2}$-minor; cf. Theorem \ref{thm:gd}.

Our main goal in this section is to show that  the extreme Gram dimension of the graph $K_{3,3}$ is equal to 2, i.e., for any  $x \in \ext \EE(K_{3,3})$ there exists a psd  completion of rank at most 2.
We start by showing that any completion of an element of $\ext \EE(K_{3,3})$ has rank at most 3.

 \if 0 Next we show two technical lemmas which will be used to show that $\fib(x)$  contains at least two distinct elements. 
 Therefore $\fib(x)$ must contain a matrix  of rank at most $2$ (see the paragraph after (\ref{eq:ellrank}))
 and thus $\egd(K_{3,3})\le 2$. 
 
The proof of Theorem \ref{theoK33} is based on the following three technical lemmata.
\fi


\begin{lemma}\label{lemK33a}
For  $x\in \ext \EE(K_{3,3})$,  any $X\in \fib(x)$  has rank at most 3.
\end{lemma}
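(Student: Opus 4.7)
The plan is to apply Lemma \ref{lemextG1} to the extreme point $x$, which forces $\UU_E\subseteq \UU_V$ for any $X\in \fib(x)$, and then derive a contradiction from the assumption $\rank X=r\ge 4$ by exhibiting an explicit symmetric matrix $R\in \UU_V^\perp\setminus \UU_E^\perp$.

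Fix an arbitrary $X\in\fib(x)$, choose a Gram representation $u_1,\ldots,u_6\in\oR^r$ in the minimal ambient dimension $r=\rank X$ (so the $u_i$'s span $\oR^r$), and consider the two subspaces $V_A=\la u_1,u_2,u_3\ra$ and $V_B=\la u_4,u_5,u_6\ra$ associated with the bipartition $A=\{1,2,3\}$, $B=\{4,5,6\}$ of $K_{3,3}$. Both subspaces have dimension at most $3$, so if $r\ge 4$ then neither one contains the other: otherwise $V_A+V_B$ would coincide with one of them and have dimension at most $3<r$, contradicting $V_A+V_B=\oR^r$. Consequently both $V_A^\perp\setminus V_B^\perp$ and $V_B^\perp\setminus V_A^\perp$ are nonempty, and I will pick $v\in V_A^\perp\setminus V_B^\perp$, $w\in V_B^\perp\setminus V_A^\perp$, and set $R=vw^\sfT+wv^\sfT\in \SSS^r$.

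The verification then reduces to two short computations. For each $k\in [6]$,
\[
\la R,u_ku_k^\sfT\ra = 2(u_k^\sfT v)(u_k^\sfT w),
\]
which vanishes because $u_k^\sfT v=0$ when $k\in A$ (as $u_k\in V_A$ and $v\in V_A^\perp$) and $u_k^\sfT w=0$ when $k\in B$; hence $R\in \UU_V^\perp$. On the other hand, for any edge $\{i,j\}$ of $K_{3,3}$ with $i\in A$ and $j\in B$, the pairing $\la R,U_{ij}\ra = u_i^\sfT R u_j$ simplifies, using $u_i^\sfT v=0$ and $u_j^\sfT w=0$, to $(u_i^\sfT w)(u_j^\sfT v)$. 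By the choice of $v$ and $w$, some $i\in A$ satisfies $u_i^\sfT w\ne 0$ and some $j\in B$ satisfies $u_j^\sfT v\ne 0$; since $K_{3,3}$ is complete bipartite, the pair $\{i,j\}$ is an edge and the above quantity is nonzero, so $R\notin \UU_E^\perp$. This contradicts Lemma \ref{lemextG1}, and forces $r\le 3$.

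The main obstacle is spotting the right perturbation $R$: the clean argument above depends on the bipartite structure of $K_{3,3}$, which provides two ``cross-directions'' $v\in V_A^\perp$ and $w\in V_B^\perp$ whose symmetric tensor automatically pairs trivially with every diagonal rank-one term $u_ku_k^\sfT$, yet pairs nontrivially with at least one edge matrix $U_{ij}$ thanks to the completeness of the bipartite graph. Without exploiting bipartiteness a straightforward dimension count only gives $r\le 4$, so the geometric construction of $R$ is what pushes the bound down to $3$.
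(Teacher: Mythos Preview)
Your proof is correct. Both your argument and the paper's hinge on Lemma~\ref{lemextG1}, which forces $\UU_E\subseteq \UU_V$ for any completion of an extreme point, but the contradiction is extracted differently. The paper uses a dimension count: pick four linearly independent vectors among the $u_i$; by Lemma~\ref{sdrtghui} the associated matrices $U_{ii}$ and $U_{ij}$ (over the edges induced by these four nodes) are linearly independent, and since any four nodes of $K_{3,3}$ span at least three edges this yields at least seven independent matrices inside $\UU_V$, a space of dimension at most six. You instead build an explicit witness $R=vw^\sfT+wv^\sfT\in\UU_V^\perp\setminus\UU_E^\perp$ from vectors $v,w$ chosen in the orthogonal complements of the two colour-class spans $V_A,V_B$. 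The paper's route is shorter and transfers more directly to other graphs once one knows the minimum number of edges induced on small vertex subsets; your route is more constructive and makes the bipartite geometry visible. One small correction to your closing commentary: the claim that ``a straightforward dimension count only gives $r\le 4$'' undersells that approach---the paper's count already reaches $r\le 3$, precisely by using that four vertices of $K_{3,3}$ induce at least three edges.
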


\begin{proof}Let $x\in \ext \EE(K_{3,3})$ and let $X\in \fib(x)$ with $\rankspace X\ge 4$.
Let $u_1,\ldots,u_6$ be a Gram representation of $X$ and choose a subset  $\{u_i: i\in I\}$ 
 of linearly independent vectors with $|I|=4$. Let $E_I$ denote the set of edges of $K_{3,3}$ induced by $I$  and set
 $$\UU_I=\{U_{ii}: i\in I\}\cup\{U_{ij}: \{i,j\}\in  E_I\}.$$Then $\UU_I$  consists of linearly independent elements; cf. Lemma \ref{sdrtghui}. By Lemma \ref{lemextG1}, 
$\UU_I$ is contained  in  $\{ \UU_{ii} : i \in [6]\}$ and thus it has dimension at most 6. On the other hand, 
 as any four nodes  induce at least three edges in $K_{3,3}$, we have that $|\UU_I|\ge 4+3=7$ and thus the dimension of $\UU_I$ is at least 7, a contradiction.
\end{proof}

The proof of the main theorem relies on the following two lemmas.


\begin{lemma}\label{lempsd}
Let $X,Z\in \SSS^n$  with $X\succeq 0$ and satisfying:
\begin{equation}\label{eqXz}
Xz=0 \Longrightarrow z^\sfT Zz\ge 0,\ \
Xz=0, z^\sfT Zz=0 \Longrightarrow Zz=0.
\end{equation}
Then $X+tZ\succeq 0$ for some $t>0$.
\end{lemma}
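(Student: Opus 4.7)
The plan is to simultaneously block-diagonalize $X$ via its spectral decomposition and split $Z$ in a compatible block form, then reduce $X+tZ\succeq 0$ to the psd-ness of a Schur complement that remains psd for all small $t>0$.

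First I would pick an orthogonal matrix $U=[U_1,U_2]$ whose columns form orthonormal bases of $\Ran X$ and $\Ker X$, respectively, so that
\[
U^\sfT XU=\left(\begin{matrix}\Lambda_+ & 0\cr 0 & 0\end{matrix}\right),\qquad U^\sfT ZU=\left(\begin{matrix}Z_{11} & Z_{12}\cr Z_{12}^\sfT & Z_{22}\end{matrix}\right),
\]
with $\Lambda_+\succ 0$. Next I would translate the two parts of \eqref{eqXz} into block conditions on $Z$. Since $z\in\Ker X$ iff $z=U_2w$ for some $w$, one has $z^\sfT Zz=w^\sfT Z_{22}w$, and $Zz=0$ iff $Z_{12}w=0$ and $Z_{22}w=0$. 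Hence \eqref{eqXz} translates exactly to $Z_{22}\succeq 0$ (from the first implication) together with $\Ker Z_{22}\subseteq\Ker Z_{12}$ (from the second, using that $w^\sfT Z_{22}w=0$ is equivalent to $Z_{22}w=0$ when $Z_{22}\succeq 0$).

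For $t>0$ small enough, $\Lambda_+ + tZ_{11}\succ 0$, so by the Schur complement criterion $U^\sfT(X+tZ)U\succeq 0$ is equivalent to
\[
S(t)\ :=\ Z_{22}-t\,Z_{12}^\sfT(\Lambda_+ + tZ_{11})^{-1}Z_{12}\ \succeq\ 0.
\]
The main remaining step will be to verify $S(t)\succeq 0$ for small $t>0$. I would do this by further diagonalizing $Z_{22}$: choose an orthogonal matrix $V=[V_+,V_0]$ where $V_+$ spans $\Ran Z_{22}$ and $V_0$ spans $\Ker Z_{22}$, so that $V^\sfT Z_{22}V=\diag(D_+,0)$ with $D_+\succ 0$. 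The key point is that the condition $\Ker Z_{22}\subseteq \Ker Z_{12}$ forces $Z_{12}V_0=0$, so conjugating $S(t)$ by $V$ produces a block-diagonal matrix whose top-left block is $D_+ - t(Z_{12}V_+)^\sfT(\Lambda_+ + tZ_{11})^{-1}(Z_{12}V_+)$ and whose bottom-right block is the zero matrix. As $t\to 0^+$ the top-left block tends by continuity to $D_+\succ 0$, hence is positive definite for all sufficiently small $t>0$, so $V^\sfT S(t)V\succeq 0$ and therefore $S(t)\succeq 0$. Combined with the Schur criterion this gives $X+tZ\succeq 0$ for such $t$.

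The hard part is recognizing how the two parts of hypothesis \eqref{eqXz} dovetail with the Schur complement argument: condition (i) alone (giving only $Z_{22}\succeq 0$) would fail to control $S(t)$ in directions in $\Ker Z_{22}$, where $S(t)$ has no $O(1)$ positive contribution from $Z_{22}$. It is precisely condition (ii), in the form $\Ker Z_{22}\subseteq\Ker Z_{12}$, that kills the off-diagonal Schur correction in those degenerate directions, so that there the Schur complement is simply $0$ rather than an arbitrary negative perturbation of $0$.
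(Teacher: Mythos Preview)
Your proof is correct and follows essentially the same approach as the paper: an orthogonal change of basis to block-diagonalize $X$, translation of \eqref{eqXz} into $Z_{22}\succeq 0$ and $\Ker Z_{22}\subseteq\Ker Z_{12}$, and then the conclusion $X+tZ\succeq 0$ for small $t>0$. The paper states this last implication without further detail, whereas you spell it out via the Schur complement and a second diagonalization of $Z_{22}$, which is a clean way to make that step explicit.
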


\begin{proof}Up to an orthogonal transformation we may assume  $X=\left(\begin{matrix} D & 0 \cr 0 & 0\end{matrix}\right)$, where $D$ is a diagonal matrix with positive diagonal entries. Correspondingly, write $Z$ in block form:  $Z=\left(\begin{matrix} A & B \cr B^\sfT & C\end{matrix}\right)$.
The conditions (\ref{eqXz}) show that $C \succeq 0$ and that the kernel of $C$ is contained in the kernel 
of $B$. This  implies that $X+t Z \succeq 0$ for some $t>0$.
\end{proof}

\begin{lemma}\label{lemK33b}
Let $x\in \ext \EE(K_{3,3})$, let $X\in \ext  \fib(x)$ with $\rankspace X =3$   and  with Gram representation $\{u_1,\ldots,u_6\}\subseteq \oR^3$. 
Let $V_1=\{1,2,3\}$ and $V_2=\{4,5,6\}$ be the bipartition of the node set of $K_{3,3}$.
Then, there exist matrices $Y_1,Y_2\in \SSS^3$ such that 
$Y_1+Y_2\succ 0$ and  
$$\la Y_k, U_{ii}\ra =0 \ \forall i\in V_k \  \forall  k\in \{1,2\} \ \text{ and }\
\exists k\in \{1,2\} \ \exists i,j\in V_k \ \la Y_k,U_{ij}\ra \ne 0.$$
\end{lemma}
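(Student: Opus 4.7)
The plan is to exploit the extremality of $X$ in $\EE_n$ to obtain a dimensional identity, and then construct $(Y_1,Y_2)$ by decomposing the identity matrix across a direct-sum decomposition.

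First I would note that since $x \in \ext\EE(K_{3,3})$, Lemma~\ref{lemextG2} implies that $\fib(x)$ is a face of $\EE_n$, so $X \in \ext\fib(x) \subseteq \ext\EE_n$ by (\ref{extremepoints}). As $\rankspace X = 3$, Proposition~\ref{propface} gives $\dim \UU_V = \binom{4}{2} = 6$, hence $\UU_V = \SSS^3$ and the six matrices $U_{ii}$, $i\in[6]$, form a basis of $\SSS^3$. Letting $d_k = \dim\lan U_{ii}: i \in V_k\ran$, the inequalities $d_1+d_2 \ge \dim \UU_V = 6$ and $d_k \le 3$ force $d_1=d_2=3$. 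Hence each subspace
\[ \mathcal{Y}_k := \{Y\in\SSS^3 : \lan Y,U_{ii}\ran = 0\ \forall i\in V_k\} \quad (k=1,2) \]
has dimension $3$, and $\mathcal{Y}_1 \cap \mathcal{Y}_2 = \UU_V^\perp = \{0\}$, so $\mathcal{Y}_1\oplus \mathcal{Y}_2 = \SSS^3$.

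Next I would decompose the identity: there are unique $Y_1 \in \mathcal{Y}_1$ and $Y_2 \in \mathcal{Y}_2$ with $Y_1+Y_2 = I$. By construction $Y_1+Y_2 = I \succ 0$ and the trace conditions of the lemma are satisfied. Moreover, neither $Y_k$ vanishes: if, say, $Y_1=0$ then $Y_2 = I \in \mathcal{Y}_2$, but $\lan I, U_{jj}\ran = \|u_j\|^2 = 1 \ne 0$ for $j\in V_2$, a contradiction.

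The main obstacle is the remaining off-diagonal condition, which requires that at least one of $W_k := \lan u_i : i\in V_k\ran$ equals $\oR^3$. Since $\rankspace X = 3$, we have $W_1+W_2 = \oR^3$. Suppose neither $W_k$ is all of $\oR^3$; then $\dim W_1 = \dim W_2 = 2$ and $\dim(W_1\cap W_2) = 1$. Setting $\mathcal{S}(W) := \{Y\in\SSS^3 : \Ker Y \supseteq W^\perp\}$, which has dimension $\binom{\dim W+1}{2}$, one checks $\lan U_{ii}: i\in V_k\ran \subseteq \mathcal{S}(W_k)$ and $\mathcal{S}(W_1)\cap \mathcal{S}(W_2) = \mathcal{S}(W_1\cap W_2)$ of dimension $1$. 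Hence $\dim\UU_V \le \dim(\mathcal{S}(W_1)+\mathcal{S}(W_2)) \le 3+3-1 = 5 < 6$, contradicting $\dim\UU_V = 6$. So, without loss of generality, $\{u_j: j\in V_2\}$ is a basis of $\oR^3$, and by Lemma~\ref{sdrtghui} the matrices $\{u_iu_j^\sfT + u_ju_i^\sfT : i,j\in V_2\}$ span $\SSS^3$. Since $Y_2 \ne 0$, some $\lan Y_2, U_{ij}\ran \ne 0$ with $i,j \in V_2$; because $\lan Y_2, U_{jj}\ran = 0$ for every $j\in V_2$, this must occur at some $i\ne j$, yielding the third required condition.
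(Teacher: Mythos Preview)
Your proof is correct and takes a genuinely different route from the paper's. Both arguments begin identically: extremality of $X$ forces $\dim\langle U_{ii}:i\in[6]\rangle=6$, hence $\UU_k^\perp$ (your $\mathcal Y_k$) give a direct-sum decomposition $\SSS^3=\UU_1^\perp\oplus\UU_2^\perp$. From here the two proofs diverge. The paper argues by contradiction that $\SSS^3_{++}\not\subseteq\WW_1^\perp\oplus\WW_2^\perp$ (with $\WW_k=\langle U_{ij}:i,j\in V_k\rangle$): if it were, one would get $\WW_1\cap\WW_2=\{0\}$, but since each $\dim W_k\ge 2$ one can pick a nonzero $w\in W_1\cap W_2$ and obtain the rank-one witness $ww^\sfT\in\WW_1\cap\WW_2$. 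Then an arbitrary positive definite $Y\notin\WW_1^\perp\oplus\WW_2^\perp$ is decomposed as $Y_1+Y_2$. You instead fix $Y=I$ from the outset, decompose $I=Y_1+Y_2$, and prove the stronger structural fact that at least one $W_k$ equals $\oR^3$ via the clean dimension count $\dim\UU_V\le\dim(\mathcal S(W_1)+\mathcal S(W_2))\le 5$ when both $\dim W_k=2$; Lemma~\ref{sdrtghui} then finishes immediately. Your argument is more constructive (the pair $(Y_1,Y_2)$ is explicit) and slightly more elementary, while the paper's approach makes transparent exactly which subspace obstruction is at play, namely $\WW_1^\perp\oplus\WW_2^\perp\ne\SSS^3$.
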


\begin{proof}Define  $\UU_k=\la U_{ii}: i\in V_k\ra\subseteq \WW_k=\la U_{ij}: i,j\in V_k\ra \subseteq \SSS^3 $ for $k=1,2$. 
With this notation we are looking for two matrices $Y_1, Y_2$ such that $Y_1+Y_2 \succ 0, Y_1 \in \UU_1^\perp, Y_2 \in \UU_2^\perp$  and either $Y_1 \not \in \WW_1^\perp$ or $Y_2 \not \in \WW_2^\perp$.

Since $x\in \ext \EE(K_{3,3})$ by Lemma \ref{lemextG2} it follows that $\fib(x)$ is a face of $\EE_6$ and by \eqref{extremepoints} we have that 
$X \in \ext \EE_6$. Then \eqref{eqdimextreme0} implies that
$\dim \la U_{ii} : i \in [6]\ra=6$ and  thus $\dim \UU_1=\dim \UU_2=3$. This implies that $\UU_1 \cap \UU_2=\{0\}$ 
 and thus   $\UU_1^\perp \cup \UU_2^\perp=\SSS^3$. Moreover, as $\dim \UU_1^\perp=\dim \UU_2^\perp=3$ it follows that   $\UU_1^\perp\cap \UU_2^\perp  =\{0\}$ and thus  $\SSS^3 =\UU_1^\perp \oplus \UU_2^\perp$. Lastly,  we have that $\WW_k^\perp \subseteq \UU_k^\perp \ (k=1,2)$ and thus $\WW_1^\perp\cap \WW_2^\perp \subseteq \UU_1^\perp\cap \UU_2^\perp=\{0\}.$

Assume  for contradiction that  $\SSS^3_{++}$ is  contained in 
$\WW_1^\perp \oplus  \WW_2^\perp$. This implies that   
\begin{equation}\label{23edr45}
\WW_1^\perp \oplus \WW_2^\perp= \SSS^3 = \UU_1^\perp\oplus \UU_2^\perp
\end{equation}
and thus 
$\WW_k=\UU_k\ (k=1,2)$. Indeed, \eqref{23edr45} implies that $\dim \WW_1^\perp+\dim \WW_2^\perp=\dim \UU_1^\perp+\dim \UU_2^\perp$ which combined with the fact that $\WW_k^\perp \subseteq \UU_k^\perp\ (k=1,2)$ gives that $\dim \WW_k^\perp=\dim \UU_k^\perp \ (k=1,2)$.  Lastly, using the fact that $\UU_k \subseteq W_k\ (k=1,2)$ the claim follows. 
In turn this implies that  $\WW_1\cap\WW_2=\UU_1\cap \UU_2=\{0\}$.

As $\dim \UU_k=3$, we have $\dim \la u_i :  i\in V_k\ra\ge 2$ for $k=1,2$. Say,  $\{u_1, u_2\}$ and $\{u_4,u_5\}$ are linearly independent.  As $\dim \la u_i: i \in [6]\ra=3$, there exists a nonzero vector $\lambda \in \oR^4$ such that $0\neq w=\lambda_1 u_1 +\lambda_2 u_2=\lambda_3 u_4 +\lambda_4 u_5$. Notice that the scalar $w$ is nonzero for otherwise the vectors $u_1, u_2$ would be dependent.   Hence  we obtain that $ww^\sfT\in \WW_1\cap \WW_2$,   contradicting the fact that $\WW_1\cap\WW_2=\{0\}$.


 Hence we have shown that $\SSS^3_{++}\not\subseteq \WW_1^\perp \oplus \WW_2^\perp$.
So  there exists a positive definite matrix $Y$ which does not belong to
 $\WW_1^\perp\oplus \WW_2^\perp$.
 Write $Y=Y_1+Y_2$, where $Y_k\in \UU_k^\perp$ for $k=1,2$. 
We may assume, say, that $Y_1\not\in \WW_1^\perp$.  
Thus $Y_1,Y_2$ satisfy the  lemma.
\end{proof}

We can now show  the main result of this section. 

\begin{theorem}\label{theoK33}For the graph $K_{3,3}$ we have that $\egd(K_{3,3})=2$, i.e., for any partial matrix  $x \in \ext \EE(K_{3,3})$ there exists a completion $X \in  \fib(x)$ with $\rankspace X\le 2$. 
\end{theorem}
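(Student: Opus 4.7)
The plan is to combine the three preceding lemmas. The lower bound $\egd(K_{3,3}) \ge 2$ is immediate: contracting any single edge of $K_{3,3}$ produces a graph containing a triangle, so $K_3 \preceq K_{3,3}$, and since forests are exactly the graphs with $\egd(G) \le 1$ \cite{Lau97}, minor-monotonicity (Lemma~\ref{lemminor}) yields $\egd(K_{3,3}) \ge 2$. For the upper bound, fix $x \in \ext \EE(K_{3,3})$. By Lemma~\ref{lemextG2}, $\fib(x)$ is a face of $\EE_6$ and in particular compact and convex, so it has an extreme point $X$ which by \eqref{extremepoints} also lies in $\ext \EE_6$; by Lemma~\ref{lemK33a}, $\rank X \le 3$. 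If $\rank X \le 2$ we are done, so I will assume $\rank X = 3$ and derive a contradiction, taking a Gram representation $\{u_1,\ldots,u_6\} \subseteq \oR^3$.

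Next, apply Lemma~\ref{lemK33b} to obtain $Y_1, Y_2 \in \SSS^3$ with $Y_1 + Y_2 \succ 0$, $\la Y_k, U_{ii}\ra = 0$ for every $i \in V_k$, and $\la Y_k, U_{ij}\ra \ne 0$ for some $k$ and some $i, j \in V_k$. Define $Z \in \SSS^6$ blockwise by $Z_{ij} = u_i^\sfT Y_k u_j$ when $i, j \in V_k$ (for each $k \in \{1,2\}$), and $Z_{ij} = 0$ when $i, j$ lie in different classes. By construction $Z$ has zero diagonal, vanishes on every bipartite (specified) entry of $x$, and is nonzero; consequently $X + tZ$ lies in $\fib(x)$ whenever $X + tZ \succeq 0$.

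The heart of the argument is to show that $Z$ is positive semidefinite on $\Ker X$ with at least one strictly positive eigenvalue. For $z \in \Ker X$ we have $\sum_i z_i u_i = 0$; setting $w = \sum_{i \in V_1} z_i u_i = -\sum_{i \in V_2} z_i u_i$, a direct expansion yields
\begin{equation*}
z^\sfT Z z = w^\sfT Y_1 w + w^\sfT Y_2 w = w^\sfT (Y_1 + Y_2) w \ge 0,
\end{equation*}
with equality iff $w = 0$. The set of $z \in \Ker X$ with $w = 0$ is the product of the kernels of the $3 \times 3$ matrices $(u_i)_{i \in V_k}$, so it has dimension $(3 - r_1) + (3 - r_2)$, where $r_k = \dim \la u_i : i \in V_k\ra$. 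Since the proof of Lemma~\ref{lemK33b} establishes $r_k \ge 2$ for each $k$, this dimension is at most $2 < 3 = \dim \Ker X$, so some $z \in \Ker X$ satisfies $z^\sfT Z z > 0$. Thus $Z|_{\Ker X}$ has a strictly positive eigenvalue, and a standard first-order perturbation argument shows that $X + tZ$ has rank at least $4$ for all sufficiently small $t > 0$. But then $X + tZ \in \fib(x)$ contradicts Lemma~\ref{lemK33a}, forcing $\rank X \le 2$ and hence $\egd(K_{3,3}) \le 2$.

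The main obstacle is designing a perturbation direction $Z$ that simultaneously respects $\fib(x)$ (by preserving the diagonal and every bipartite entry) and forces the rank of $X + tZ$ to exceed the bound of Lemma~\ref{lemK33a}; this is exactly what the construction in Lemma~\ref{lemK33b} enables, through the bipartition-block definition of $Z$ combined with the positivity $Y_1 + Y_2 \succ 0$.
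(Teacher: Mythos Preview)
Your argument has a genuine gap at the final step: you assert that $X+tZ\in\fib(x)$ (hence $X+tZ\succeq 0$) for small $t>0$, but you never justify positive semidefiniteness. Showing that $Z$ restricted to $\Ker X$ is psd is \emph{not} enough for $X+tZ\succeq 0$; the $2\times 2$ example $X=\mathrm{diag}(1,0)$, $Z=\bigl(\begin{smallmatrix}0&1\\1&0\end{smallmatrix}\bigr)$ already shows this (here $Z|_{\Ker X}=0\succeq 0$ but $\det(X+tZ)=-t^2<0$). What is needed is the second hypothesis of Lemma~\ref{lempsd}: $Xz=0$ and $z^\sfT Zz=0$ imply $Zz=0$. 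This actually holds in your setting and follows immediately from your own computation: $z^\sfT Zz=0$ forces $w=0$, and then $(Zz)_i=u_i^\sfT Y_k(\pm w)=0$ for every $i$. But you skip this verification and jump straight to ``$X+tZ\in\fib(x)$'', so as written the contradiction with Lemma~\ref{lemK33a} is unearned. (Your perturbation claim $\rank(X+tZ)\ge 4$ is correct, via the Schur complement of the range block; the problem is solely membership in $\fib(x)$.)

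Once that gap is closed, your endgame differs from the paper's but is perfectly valid. The paper verifies both hypotheses of Lemma~\ref{lempsd}, deduces $X+tZ\succeq 0$ for some $t>0$, and then argues that $\fib(x)$ contains at least two points; since every point of $\fib(x)$ has rank $\le 3$ (Lemma~\ref{lemK33a}) and $\fib(x)$ is a face of $\EE_6$, any boundary point of $\fib(x)$ has rank $\le 2$. Your route is more direct: you produce a single element of $\fib(x)$ of rank $\ge 4$, contradicting Lemma~\ref{lemK33a} outright, and hence the chosen extreme point $X$ must already have rank $\le 2$.
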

\begin{proof}
Let $x\in \ext \EE(K_{3,3})$ and  let $X\in \fib(x)$ be an extreme point of $\EE_6$ (which exists by Lemma \ref{lemextG2} (ii)). 
We assume that $\rank X=3$ (else we are done).
Let $\{u_1,\ldots,u_6\}\subseteq \oR^3$ be a Gram representation of $X$ and let $Y_1$ and $Y_2$ be the matrices provided by Lemma \ref{lemK33b}. 
Moreover,  define the matrix $Z\in \SSS^6$ by 
$Z_{ij}= \lan Y_k,U_{ij}\ra$ for $i,j\in V_k,$ $k\in \{1,2\}$, and $Z_{ij}=0$ for $i\in V_1$, $j\in V_2$.  
By Lemma~\ref{lemK33b},  $Z$ is a nonzero matrix with zero diagonal entries and with zeros at the positions corresponding to  the edges of $K_{3,3}$.

Next we show  that $X+t Z\succeq 0$ for some $t>0$, using Lemma \ref{lempsd}.
For this 
 it is enough to verify  that (\ref{eqXz}) holds. 
Assume $Xz=0$, i.e., $a:=\sum_{i\in V_1}z_iu_i=-\sum_{j\in V_2} z_ju_j$.
Then, $$z^\sfT Zz= \sum_{k=1,2}\sum_{i,j\in V_k}z_iz_j \lan Y_k,U_{ij}\ra =   \lan Y_1+Y_2,aa^\sfT\ra \ge 0,$$ since $Y_1+Y_2\succ 0$.
Moreover, $z^\sfT Zz=0$ implies $a=0$ and thus
$Zz=0$ since, for $i\in V_k$,  $(Zz)_i =\sum_{j\in V_k} \lan Y_k, U_{ij}\ra z_j= \pm \lan Y_k, (u_ia^\sfT+au_i^\sfT)/2\ra$.
Hence, the matrix $X+tZ$ also belongs to the fiber of $x$.  This shows that  $|\fib(x)|\ge 2$.
As, by Lemma \ref{lemK33a}, all matrices in $\fib(x)$ have rank at most 3, it follows that $\fib(x)$ contains a matrix of rank at most 2 (indeed, any matrix in the relative interior of $\fib(x)$ has rank 3 and any matrix on the boundary has rank at most 2).
\end{proof}

\if 0 
\begin{proof}From Lemma \ref{lemK33a} we have that  $\rankspace X \le 3$ for every $X \in \fib(x)$. Assume for contradiction that there exists a partial matrix  $x \in \ext \EE(K_{3,3})$ with the property that $\rankspace X=3$ for every $X \in \fib(x)$. We now show that this in fact implies that $\fib(x)$ is a singleton. Indeed, assume that  $\fib(x)$ is not a singleton and let $X \in \ext \fib(x)$ (notice that $\fib(x)$ has an extreme point since it is closed convex and does not contain straight lines).  Since $\fib(x)$ is not a singleton there exists a matrix $X' \in \relint \fib(x)$ such that $\fib(x)=F_{\EE_n}(X')$. Since $X$ is a boundary point of $\fib(x)$ (it is an extreme point)  it follows that  $X\ne X'$ and thus $\rankspace X'>\rankspace X=3$. This contradicts Lemma \ref{lemK33a}.


On the other hand consider a matrix $X \in \ext \fib(K_{3,3})$ with $\rankspace X=3$ and let $\{u_1,\ldots,u_6\}$ be its Gram representation. Let $Y_1$ and $Y_2$ be the matrices provided by Lemma \ref{lemK33b} and define the matrix $Z\in \SSS^6$ by 
$Z_{ij}= \la Y_k,U_{ij}\ra$ for $i,j\in V_k,$ $k\in \{1,2\}$, and $Z_{ij}=0$ for $i\in V_1$, $j\in V_2$.  
By Lemma~\ref{lemK33b},  $Z$ is a nonzero matrix with zero diagonal entries and zeros at the positions corresponding to  the edges of $K_{3,3}$.

Next we show  that $X+t Z\succeq 0$ for some $t>0$, using Lemma \ref{lempsd}.
For this 
 it is enough to verify  that (\ref{eqXz}) holds. 
Assume $Xz=0$, i.e., $a:=\sum_{i\in V_1}z_iu_i=-\sum_{j\in V_2} z_ju_j$.
Then, $$z^\sfT Zz= \sum_{k=1,2}\sum_{i,j\in V_k}z_iz_j \la Y_k,U_{ij}\ra =   \la Y_1+Y_2,aa^\sfT\ra \ge 0,$$ since $Y_1+Y_2\succ 0$.
Moreover, $z^\sfT Zz=0$ implies $a=0$ and thus
$Zz=0$ since, for $i\in V_k$,  $(Zz)_i =\sum_{j\in V_k} \la Y_k, U_{ij}\ra z_j= \pm \la Y_k, (u_ia^\sfT+au_i^\sfT)/2\ra$.

Hence, the matrix $X'=X+tZ$ is positive semidefinite and since $Z$ has zero diagonal it is also an element of $\EE_6$. Moreover, since the matrix $Z$ is zero on entries corresponding to edges of $K_{3,3}$ it follows that $X'\in \fib(x)$. Lastly, since $Z$ has at least one nonzero entry it follows that $X'\ne X$. This contradicts the fact that $\fib(x)$ is a singleton. 
\end{proof}\fi

\if 0 \begin{proof}By Theorem \ref{dfrtgyypl} we know that there exist $X,X'\in \fib(x)$ with $X\ne X'$. Moreover, by Lemma \ref{lemK33a} both $X$ and $X'$ have rank at most 3. If one of them has rank strictly less than 2 then we are done so from now on we assume that $\rankspace X=\rankspace X'=3$. Our next goal is to show that $X,X' \in \relint \fib(x)$. For contradiction assume this does not hold  and we distinguish the following two cases: $(i)$ Either $X\not \in \relint \fib(x)$ and  $X'\in \relint \fib(x)$ or $(ii)$ $X,X' \not \in \relint \fib(x)$. In  case $(i)$ we would have that $\rankspace X'>\rankspace X$ which is absurd since both are equal to 3. For  case $(ii)$ let $\fib(x)=F_{\EE_n}(X'')$ where $X'' \in \relint \fib(x)$. Then we would have that $\rankspace X''>\rankspace X=3$ which contradicts Lemma \ref{lemK33a}.
\end{proof}
\fi


We now know that  both graphs $K_{3,3}$ and $K_5$ belong to the class $\GG_2$. We next show that they are in some sense maximal for this property.

\begin{lemma}\label{lemblock}
Let  $G$ be a 2-connected graph that contains $K_5$ or   $K_{3,3}$ as a proper subgraph. Then, $G$ contains  $H_3$ as a minor and thus~$\egd(G)\ge 3$.
\end{lemma}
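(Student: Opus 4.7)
The plan is to show $H_3 \preceq G$, from which $\egd(G) \ge \egd(H_3) = 3$ follows by minor-monotonicity (Lemma~\ref{lemminor}) and Lemma~\ref{lem:H3}. Since $H_3$ has maximum degree $3$, by the remark in Section~\ref{secprel} it suffices to exhibit a subdivision of $H_3$ inside $G$ as a subgraph. Recall that $H_3$ is itself a $K_4$-subdivision in which two edges sharing a common vertex of the underlying $K_4$ are each subdivided once, so the target is: four branch vertices in $V(G)$ together with six internally vertex-disjoint paths realizing $K_4$ on them, with two of the paths meeting at a common branch vertex each having at least one internal vertex. The common tool is the following consequence of 2-connectivity: letting $K$ denote the $K_5$- or $K_{3,3}$-subgraph of $G$, any connected component of $G \setminus V(K)$ has at least two neighbors in $V(K)$ (otherwise the unique neighbor would be a cut-vertex), and hence yields a path in $G$ between two distinct vertices of $V(K)$ with all internal vertices outside $V(K)$.

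For Case~1 ($K_5 \subsetneq G$), since $K_5$ is complete the properness forces an external component, producing an external path $P$ from some $a$ to some $b$ in $V(K_5)$. Let $c, d, e$ denote the remaining $K_5$-vertices. On the $K_4$ with branches $\{a,b,c,d\}$, I realize the edge $ab$ by $P$, the edge $ac$ by the length-two path $a$--$e$--$c$ in $K_5$, and the remaining four $K_4$-edges by the corresponding direct $K_5$-edges. The two subdivided edges $ab$ and $ac$ share branch $a$, giving the desired structure.

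For Case~2 ($K_{3,3} \subsetneq G$) with bipartition $X=\{x_1,x_2,x_3\}$, $Y=\{y_1,y_2,y_3\}$, the extra structure is either an edge inside one of the two classes or an external path $P$ between two vertices $p,q \in V(K_{3,3})$ obtained from the tool above. When the extra ingredient connects two vertices in the same class (WLOG $x_1$ and $x_2$, either via an extra edge or via $P$), I take the $K_4$ on $\{x_1,x_2,x_3,y_1\}$, realize edge $x_1x_2$ by the extra edge or by $P$, and subdivide the edges $x_1x_3$ and $x_2x_3$ via the paths $x_1$--$y_2$--$x_3$ and $x_2$--$y_3$--$x_3$; the two subdivided edges meet at $x_3$. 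When $P$ crosses the bipartition (WLOG $p=x_1$, $q=y_1$), I take the $K_4$ on $\{x_1,x_2,y_1,y_2\}$ with subdivisions of $x_1x_2$ via $y_3$, of $y_1y_2$ via $x_3$, and of the $K_{3,3}$-edge $x_1y_1$ via the external path $P$; then the subdivided edges $x_1x_2$ and $x_1y_1$ meet at $x_1$. In each subcase the internal vertices used are pairwise distinct and disjoint from the four branch vertices, so the six paths are internally vertex-disjoint as required.

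The main obstacle is the crossing subcase in Case~2. Inside $K_{3,3}$ alone, any $K_4$-topological-minor must use the two leftover $K_{3,3}$-vertices as subdivision vertices for two edges of $K_4$, and these must be opposite (non-adjacent) edges to avoid conflict at a common branch vertex; hence two adjacent subdivided edges cannot be produced from $K_{3,3}$ by itself. The external crossing path is precisely what breaks this parity, by allowing a third edge of $K_4$ -- itself already a direct edge of $K_{3,3}$ -- to be subdivided in a way that makes two of the three subdivided edges adjacent. Once the cases are set up correctly, verifying disjointness reduces to a brief inspection in each subcase.
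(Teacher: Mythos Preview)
Your proof is correct and follows essentially the same case analysis as the paper's proof (the paper reduces to the three situations: $K_5$ plus an external node attached to two of its vertices, $K_{3,3}$ plus an intra-class edge, and $K_{3,3}$ plus a node attached to two adjacent vertices, then says ``easy verification''). You make the verification explicit by exhibiting the $H_3$-subdivision in each case, which is a welcome improvement over the paper's terse treatment; in particular, your handling of the crossing subcase in Case~2 (where the external path joins vertices in different colour classes) and the accompanying explanation of why $K_{3,3}$ alone cannot produce two adjacent subdivided $K_4$-edges is the one place where the argument is not entirely routine, and you resolve it cleanly.
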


\begin{proof}The proof is based on the following observations. If $G$ is a 2-connected graph containing $K_5$ or $K_{3,3}$ as a proper  subgraph, 
then $G$ has a minor $H$ which is one of the following  graphs: (a) $H$ is $K_5$ with one more node adjacent to two nodes of $K_5$, 
(b) $H$ is $K_{3,3}$ with one more edge added, (c) $H$ is $K_{3,3}$ with one more node adjacent to two adjacent nodes of $K_{3,3}$.
Then $H$ contains a $H_3$ subgraph in cases (a) and (b), and a $H_3$ minor in case (c) (easy verification).
Hence, $\egd(G)\ge \egd(H_3)=3$.
\end{proof}

We conclude this section with a lemma that will be used in the proof of  Theorem~\ref{theofree}.

\begin{lemma}\label{lemnewfwuef} Let $G$ be a 2-connected graph with $n \ge 6$ nodes. Then, 
\begin{itemize}
 \item[(i)] If $G$ has no $F_3$-minor then $\omega(G)\le 4$.
\item[(ii)] If $G$ is chordal and has no $F_3$-subgraph then $\omega(G)\le 4$.
\end{itemize}
\end{lemma}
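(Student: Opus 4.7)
Both parts I prove by contrapositive: I will assume $\omega(G)\geq 5$ and produce an $F_3$-minor (in case (i)) or an $F_3$-subgraph (in case (ii)). In each case I fix a $5$-clique $K=\{v_1,\dots,v_5\}$ in $G$ and, using $n\geq 6$, pick some vertex $w\notin K$.

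For (i), my plan is to use the fan form of Menger's theorem, available thanks to the $2$-connectivity of $G$, to produce two internally disjoint paths $P_1,P_2$ from $w$ to $K$ ending at distinct $K$-vertices, say $v_1$ and $v_2$. I would then \emph{shortcut} each such path to its first $K$-vertex: since $P_1,P_2$ share only $w$, any internal $K$-vertex of $P_1$ lies in $K\setminus\{v_1,v_2\}$ (it cannot equal $v_2$, which is internal to no path), so the shortcut strictly decreases the number of internal $K$-vertices while preserving both internal disjointness and distinctness of the endpoints. After finitely many such shortcuts and a relabeling, I obtain paths $Q_1,Q_2$ from $w$ to $v_1,v_2$ respectively, internally disjoint and containing no internal $K$-vertex. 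The $F_3$-minor is then witnessed by the branch sets $\{v_1\}$, $\{v_2\}$, $\{v_3\}$, $\{v_4\}$ (playing the role of $v_{13}$), $\{v_5\}$ (playing $v_{23}$), and $B=(V(Q_1)\cup V(Q_2))\setminus\{v_1,v_2\}$ (playing $v_{12}$). Connectivity of $B$ holds because $Q_1\setminus\{v_1\}$ and $Q_2\setminus\{v_2\}$ are two paths sharing the vertex $w$, and all nine edges of $F_3$ are realized, either inside $K$ (for the six edges not involving $v_{12}$) or as the edges of $Q_1,Q_2$ incident to $v_1$ and $v_2$ (for the two edges meeting $v_{12}$).

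For (ii), I would exploit the chordal hypothesis to upgrade the conclusion to an actual $F_3$-subgraph. If $\omega(G)\geq 6$, then $G$ already contains $K_6$, which trivially contains $F_3$ as a subgraph, so I may assume $\omega(G)=5$ and take $K$ to be a maximal $5$-clique. Since $n\geq 6$, the clique tree of the chordal graph $G$ has at least two nodes, so $K$ is adjacent in the clique tree to some maximal clique $K''\ne K$. By $2$-connectivity, the minimal vertex separator $K\cap K''$ has size at least $2$, and hence any $w\in K''\setminus K$ is adjacent in $G$ to two vertices $v_1,v_2\in K$. Taking $v_1,v_2,v_3$ as the central triangle of $F_3$, $w$ as $v_{12}$, and $v_4,v_5$ as $v_{13}$ and $v_{23}$ realizes all nine edges of $F_3$ among these six vertices of $G$, giving $F_3$ as a subgraph. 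The main delicate point will be the shortcutting argument in (i): each iteration must produce a strictly shorter \emph{and still valid} pair of internally disjoint paths with distinct $K$-endpoints, which ultimately rests on the observation that $v_2\notin V(P_1)$ and $v_1\notin V(P_2)$ whenever $P_1,P_2$ share only $w$.
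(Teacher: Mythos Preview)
Your proof is correct. Part (i) follows the same line as the paper: both use the $2$-connectivity to produce two internally disjoint paths from an outside vertex to the $K_5$ and then read off the $F_3$-minor; you are simply more explicit than the paper about why the paths can be taken to avoid $K$ internally (your shortcutting argument, which in fact could be absorbed into the standard fan version of Menger's theorem).

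For part (ii) you take a genuinely different route. The paper keeps the two paths $P_{uv},P_{uw}$ from part (i), looks at the neighbours $v',w'$ of $v,w$ on these paths, and uses chordality (together with minimality of the paths) on the cycle formed by $P_{uv}\cup P_{uw}$ and the edge $vw$ to force one of the chords $\{v,w'\}$ or $\{w,v'\}$, which immediately yields an $F_3$-subgraph. You instead invoke the clique-tree structure of chordal graphs: a maximal $5$-clique $K$ must share a separator of size $\ge 2$ with an adjacent maximal clique, so some $w\notin K$ has two neighbours in $K$. Your argument is cleaner and avoids the somewhat delicate case analysis hidden in the paper's ``shortest paths plus chordality'' step; the paper's approach has the virtue of reusing the same setup as (i). One small slip: the number of $F_3$-edges not involving $v_{12}$ is seven, not six, but this does not affect the argument.
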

\begin{proof} Assume for contradiction that $\omega(G) \ge 5$ and let $U\subseteq V$ with  $G[U]=K_5$.   Since $G$ is 2-connected and $n\ge 6$, there exists a node $ u \not \in U$ which is connected by two vertex disjoint paths to two distinct  nodes  $v,w \in U$; let  $P_{uv}$ and $ P_{uw}$ be such  shortest  paths.
In case $(i)$, contract the paths $P_{uv}$ and $P_{uw}$ to get  a node adjacent to both  $v$ and $w$. Then,   we can easily see that  $G$ has  an $F_3$-minor, a contradiction.
In case $(ii)$, let  $v'\in P_{uv}$ and $w' \in P_{uw}$  with $(v,v'), (w,w') \in E(G)$. Since $G$ is chordal and the paths are the shortest possible, at least one of the edges $(v,w')$ or $(w,v')$ will be present in $G$. This implies that   $G$ contains an $F_3$-subgraph, a contradiction. 
\end{proof}


\section{Graphs with extreme Gram dimension at most~2} \label{secmainG2}

In this section we characterize the class $\GG_2$ of  graphs with extreme Gram dimension at most  2.
Our main result  is the following:

\begin{theorem}\label{thm:main} For any graph $G$,

$$\egd(G)\le 2 \text{ if and only if } G \text{ has no minors } F_3 \text{ or } H_3.$$
\end{theorem}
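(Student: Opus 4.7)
The forward implication is immediate from minor-monotonicity of $\egd$ (Lemma \ref{lemminor}) combined with $\egd(F_3)=3$ (Theorem \ref{theoFr}) and $\egd(H_3)=3$ (Lemma \ref{lem:H3}). Thus any graph $G$ with $\egd(G)\le 2$ must exclude both $F_3$ and $H_3$ as minors.

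For the converse, assume $G$ has no $F_3$ and no $H_3$ minor. I would first handle three easy reductions. If $G$ has at most $5$ vertices, then $\egd(G)\le 2$ directly from the bound in Lemma \ref{lemrankr}, since $\binom{3+1}{2}=6>5$. If $G=K_{3,3}$, apply Theorem \ref{theoK33}. If $G$ is disconnected or has a cut node, decompose it as a clique $k$-sum with $k\le 1$ of smaller graphs, each of which still has no $F_3$ or $H_3$ minor, and conclude by Lemma \ref{lemcliquesum} and induction. After these reductions I may assume $G$ is $2$-connected, $|V(G)|\ge 6$, and $G\ne K_{3,3}$.

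The central step is to pass to a chordal extension. Using Theorem \ref{theonoF3H3}, I would find a chordal graph $G^{*}$ on the same vertex set as $G$ with $E(G)\subseteq E(G^{*})$ and such that $G^{*}$ still has no $F_3$ and no $H_3$ minor; when $G$ has no $K_4$ minor this is actually the cleaner statement of Theorem \ref{theonoF3K4}, while the general case involves handling graphs that do contain a $K_4$ minor. Since $G$ is obtained from $G^{*}$ by edge deletions, minor-monotonicity gives $\egd(G)\le \egd(G^{*})$, so it suffices to prove $\egd(G^{*})\le 2$. But $G^{*}$ is chordal and has no $F_3$ subgraph (equivalently, no $F_3$ minor), hence by Lemma \ref{lemnewfwuef}(ii) satisfies $\omega(G^{*})\le 4$, and Theorem \ref{theofree} (the chordal characterization of $\GG_2$) then yields $\egd(G^{*})\le 2$.

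The main obstacle, and the reason the argument is split across Sections \ref{secchordal}--\ref{secnoH3}, is constructing the chordal extension $G^{*}$ while preserving the forbidden-minor conditions; naive triangulations may introduce an $F_3$ or $H_3$ minor. This is a purely graph-theoretic task and requires analyzing how cycles of length at least $4$ in a $2$-connected graph with no $F_3$ or $H_3$ minor can be triangulated, with separate treatment depending on whether $G$ contains a $K_4$ minor (which is why $K_{3,3}$, being $K_4$-minor-free but extremally placed, must be excluded and handled by hand via Theorem \ref{theoK33}).
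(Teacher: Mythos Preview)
Your proposal is correct and follows essentially the same route as the paper: reduce to the 2-connected case with $n\ge 6$ and $G\ne K_{3,3}$, then invoke Theorem \ref{theonoF3H3} to pass to a chordal extension in $\FF(F_3,H_3)$ and finish with Theorem \ref{theofree}. One small factual slip: $K_{3,3}$ is \emph{not} $K_4$-minor-free (contract two disjoint edges); the actual reason it must be handled separately is that any chordal extension of $K_{3,3}$ creates an $H_3$ minor (cf.\ Lemma \ref{lemblock}), so Theorem \ref{theonoF3H3} genuinely fails for it.
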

\noindent The graphs $F_3$ and $H_3$ are illustrated in Figure  \ref{adouguowef} below. 

 \begin{figure}[h]
\bc
  \includegraphics[scale=0.4]{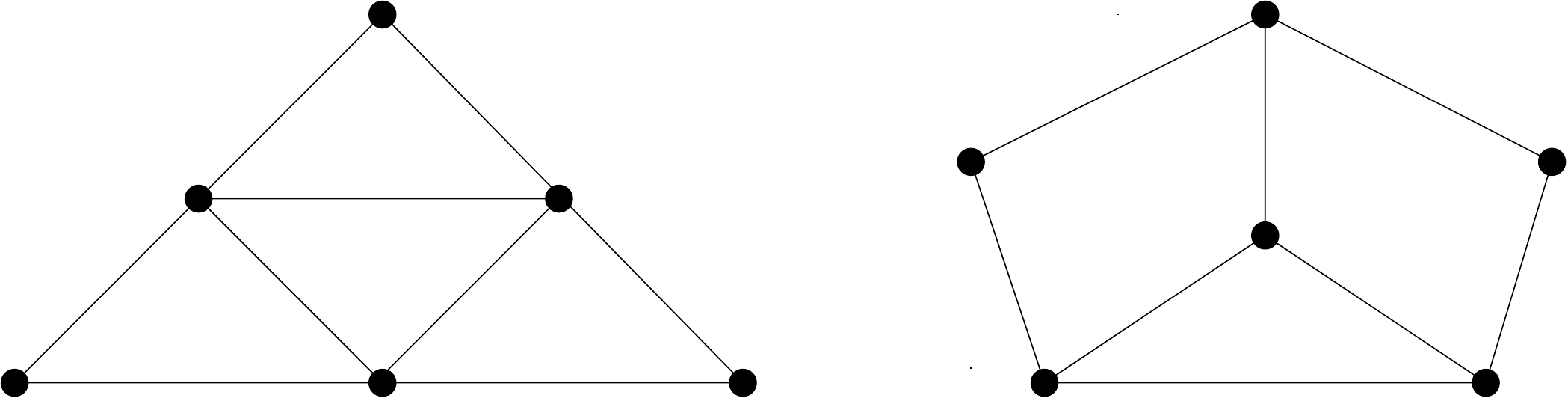}
  \caption{The graphs $F_3$ and $H_3$}
  \label{adouguowef}
\ec
\end{figure} 
In the previous sections we  established  that the graphs $F_3$ and $H_3$ are minimal forbidden minors for the class of graphs satisfying $\egd(G)\le 2$. In order to prove Theorem \ref{thm:main} it remains to show that a graph $G$ having  no $F_3$ and $H_3$ minors satisfies  $\egd(G)\le 2$.

By Lemma \ref{lemcliquesum} we may assume that $G$ is 2-connected. Moreover, we may assume that  $|V(G)|\ge 6$, since for graphs on at most  five nodes  we know that $\egd(G)\le\egd(K_5)= 2$ (recall \eqref{eq:upperbound}). Additionally, since $\egd(K_{3,3})=2$ (recall Theorem \ref{theoK33}) we may also assume that $G\ne K_{3,3}$.

Consequently,  it suffices to consider  2-connected graphs with at least 6 nodes that are different from  $K_{3,3}$. Then, 
Theorem \ref{thm:main}   follows  from  the equivalence of the first two items in  the next theorem.

\if 0
We show that $G\in\GGt$ if and only if $G$ is a  clique $0$- and 1-sum of some graphs which 
 either (i) have at most $5$ nodes, or are (ii)  $K_{3,3}$, or (iii)  a minor of $T\stp K_2$ for some tree $T$.
 \fi 

\if 0 
\subsection{The main result}
Here we formulate several characterizations for the class $\GGt$ and we outline the proof.
By Theorem \ref{theomainr} we know that 
\begin{equation}\label{impl1}
\sla(G)\le 2   \Longrightarrow  G\in \GGt
\end{equation}
and,  by Theorem \ref{theoFr} and Lemma \ref{lem:H3}, we know  that 
\begin{equation}\label{impl2}
G\in \GGt\Longrightarrow G \text{ has no minors } F_3,\ H_3.
\end{equation}
The three graph properties involved in (\ref{impl1}), (\ref{impl2}) are not equivalent in general: 
$K_5, K_{3,3}\in \GG_2$ (Theorem \ref{theoK33}),  but $\sla(K_5)=\sla(K_{3,3})=3$ (see Section~\ref{seclast}).
However, these two graphs are exceptional since they cannot occur as proper subgraphs of a 2-connected graph with no $H_3$ minor (Lemma \ref{lemblock}).

As the class $\GGt$ is closed under taking clique 0- and 1-sums, it suffices  to characterize the  2-connected graphs in $\GGt$. Moreover  since every graph with at most 5 nodes belongs to $\GG_2$ we focus on graphs with at least 6 nodes. 
We  show the following result: 
\fi

\begin{theorem} \label{theomain}
Let $G$ be a 2-connected graph with  $n\ge 6$ nodes and   $G\ne K_{3,3}$. Then, the following assertions are equivalent.
\begin{itemize}
\item[(i)]
$\egd(G)\le 2$.
\item [(ii)] 
$G$ has no minors $F_3$ or $H_3$.
\item[(iii)] 
$\sla(G)\le 2$, i.e., $G$ is a minor  of  $T\stp K_2$ for some tree $T$.
\end{itemize}
\end{theorem}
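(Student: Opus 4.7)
The implications \textup{(iii)} $\Rightarrow$ \textup{(i)} and \textup{(i)} $\Rightarrow$ \textup{(ii)} follow from material already in place. For \textup{(iii)} $\Rightarrow$ \textup{(i)}, Corollary \ref{cor:egdlesla} gives $\egd(G) \le \sla(G) \le 2$. For \textup{(i)} $\Rightarrow$ \textup{(ii)}, I argue contrapositively: if $G$ has $F_3$ or $H_3$ as a minor, then minor-monotonicity of $\egd$ (Lemma \ref{lemminor}), together with $\egd(F_3) = 3$ (Theorem \ref{theoFr}) and $\egd(H_3) = 3$ (Lemma \ref{lem:H3}), yields $\egd(G) \ge 3$.

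The essential content is \textup{(ii)} $\Rightarrow$ \textup{(iii)}, and my plan is to pass through the chordal case in two steps. First, I would produce a chordal supergraph $\tilde G \supseteq G$ on the same vertex set that is still free of $F_3$ and $H_3$ minors; this is the statement of Theorem \ref{theonoF3H3}. The natural approach is induction on the number of non-edges: pick a chordless cycle of length at least four and add a diagonal, choosing it so that no new $F_3$ or $H_3$ minor is created. The hypotheses that $G$ is 2-connected, has at least six nodes, and is not $K_{3,3}$ enter here; in particular, Lemma \ref{lemblock} ensures that $K_5$ and $K_{3,3}$ cannot appear as proper subgraphs of $\tilde G$, severely restricting the local obstructions that must be ruled out when inserting a chord.

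Second, I would show that any 2-connected chordal graph on at least six vertices, distinct from $K_{3,3}$ and containing no $F_3$ subgraph and no $H_3$ subgraph, is a minor of $T \stp K_2$ for some tree $T$; this is the chordal characterization of Theorem \ref{theofree}. By Lemma \ref{lemnewfwuef}(ii) such a graph has $\omega \le 4$, so $\tilde G$ is a clique-sum of cliques of size at most four along its clique tree. Absence of $F_3$ forbids a triangle from having three triangles glued along its three edges, while absence of $H_3$ restricts how $K_4$-blocks may be concatenated; a careful analysis of the clique tree should show that it embeds into a $T \stp K_2$ structure in which each edge of $T$ corresponds to a $K_4$-block and shared $K_2$'s realise the clique-sums. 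Combining the two steps, since $G$ is a subgraph (hence a minor) of $\tilde G$ and $\sla$ is minor monotone, $\sla(G) \le \sla(\tilde G) \le 2$.

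The main obstacle I anticipate is the first step: the chordal extension must be chosen so that \emph{no} new forbidden minor appears, and a careless choice of chord can create an $H_3$ minor by contracting long paths elsewhere in $G$. The argument presumably proceeds by induction on $|V(G)|$ or $|E(G)|$, splitting on whether $G$ has a small separator (where a clique-sum reduction applies via Lemma \ref{lemcliquesum}) or is highly connected (where one exploits the absence of $K_5$ and $K_{3,3}$ as proper subgraphs to locate a safe chord). The fact that the paper organises Sections \ref{secnoK4} and \ref{secnoH3} as a two-stage progression, first producing a chordal extension avoiding $F_3$ and $K_4$ and then relaxing to allow $K_4$ while still excluding $H_3$, suggests that this incremental approach is what makes the extension argument tractable.
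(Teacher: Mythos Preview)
Your proposal is correct and follows essentially the same architecture as the paper: the easy implications are exactly as you state, and for \textup{(ii)} $\Rightarrow$ \textup{(iii)} the paper likewise first constructs a chordal supergraph in $\FF(F_3,H_3)$ (Theorem~\ref{theonoF3H3}) and then invokes the chordal characterization (Theorem~\ref{theofree}). One clarification: the chordal extension is not achieved by the naive ``add a safe chord by induction on non-edges'' scheme you first suggest, but precisely via the structural two-stage route you identify at the end---first reducing to the $K_4$-free case by analyzing $K_4$-homeomorphs and their ``sides'', applying Theorem~\ref{theonoF3K4}, and then restoring the deleted edges.
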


The implication $(i) \Longrightarrow (ii)$ follows from Theorem \ref{theoFr} and Theorem \ref{lem:H3}. Moreover, the implication $(iii) \Longrightarrow (i) $  follows from Theorem \ref{theomainr}. The rest  of this  chapter is dedicated to  proving the  implication $(ii) \Longrightarrow (iii).$
The proof   consists of  two steps. First we consider the chordal case and show:

 \begin{itemize}
\item[(1)]
 {\bf The chordal case:} Let $G$ be a 2-connected  chordal graph with $n\ge 6$ nodes.  Then,  $G$ has no $F_3$ or $H_3$-minors    if and only if  $G$ is a contraction minor of $T\stp K_2$, for some tree $T$ (Section \ref{secchordal}, Theorem \ref{theofree}). 
\end{itemize}
Then, we reduce the general case to the chordal case and show: 
\begin{itemize}
\item[(2)]
  {\bf Reduction to the chordal case:} Let $G$ be a 2-connected graph with $n \ge 6$ nodes and   $G\ne K_{3,3}$.  If   $G$ has no $F_3$ or $H_3$-minors   then $G$  
 is  a subgraph of a chordal graph  with  no $F_3$ or $H_3$-minors.  
\end{itemize}

Notice that, in  case (1),  $G$ is by assumption chordal and thus  the case $G=K_{3,3}$ is automatically excluded. For case (2), we first need to exclude $K_4$ instead of $H_3$ (Section \ref{secnoK4}, Theorem \ref{theonoF3K4}) and then we derive from this special case  the general result
(Section \ref{secnoH3}, Theorem \ref{theonoF3H3}).


\subsection{The  chordal case}\label{secchordal}

Our  goal in  this section is to    characterize the  $2$-connected chordal graphs $G$ with  $\egd(G)\le 2$. 
By Lemma \ref{lemblock}, if $G\ne K_5$ has $\egd(G)\le 2$, then $\omega(G)\le 4$.  
Throughout this section we denote by  $\CC$ the family of all 2-connected chordal graphs with $\omega(G)\le 4$.
Any graph $G\in \CC$ is a clique 2-  or 3-sum of  $K_3$'s and $K_4$'s.
Note that  $F_3$ belongs to $\CC$ and has $\egd(F_3)=3$. On the other hand, any graph  $G=T\stp K_2$ where $T$ is a tree, belongs to $\CC$ and has $\egd(G)=2$. These graphs are ``special clique 2-sums" of $K_4$'s, as they satisfy the following property: 
every 4-clique  has at most two edges which are cutsets 
and these two edges are not adjacent. This motivates the following definitions, useful in the proof of Theorem \ref{theofree} below.  

\begin{definition}\label{def_free}
Let $G$ be a  2-connected  chordal graph with $\omega(G)\le 4$.
\begin{itemize}
\item[(i)] An edge of $G$  is  called  {\em free} if it  belongs to exactly one 
maximal clique  and {\em non-free} otherwise. 
\item[(ii)] A 3-clique  in $G$ is  called  {\em free} if it contains at least one free edge.
\item[(iii)] A  4-clique  in $G$ is called {\em free} if it does not have two adjacent non-free edges. A free 4-clique   can be partitioned as $\{a,b\} \cup \{c,d\}$, called its {\em two sides}, where only  $\{a,b\}$ and $\{c,d\}$ can be non-free.
\item[(iv)] $G$ is  called free if all its maximal cliques are free.
\end{itemize}

\end{definition}



For instance, $F_3$, $K_5\setminus e$  
 (the clique 3-sum of two $K_4$'s) are  not  free. Hence any free  graph in $\CC$ is a clique  2-sum of  free $K_3$'s and free $K_4$'s. 
 Note also that $\sla(K_5\setminus e)=3$. 
 We  now show that for  a graph  $G \in \CC$ the property of being free is equivalent to  having $\sla(G)\le 2$ and also to having  $\egd(G)\le 2$.

\begin{theorem}\label{theofree}
Let $G$ be a 2-connected chordal graph with $n\ge 6$ nodes. The following assertions are equivalent:
\begin{itemize}
\item[(i)] $G$ has no minors $F_3$ or $H_3$.
\item[(ii)] $G$ does not contain $F_3$ as a subgraph.
\item[(iii)]  $\omega(G)\le 4$ and $G$ is free. 
\item[(iv)] $G$ is a contraction minor of  $T\stp K_2$ for some tree $T$.
\item[(v)] $\sla(G)\le 2.$
\item[(vi)] $\egd(G)\le 2$.
\end{itemize}
\end{theorem}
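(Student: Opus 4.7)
My plan is to establish the equivalences via the cyclic chain $(iv)\Rightarrow(v)\Rightarrow(vi)\Rightarrow(i)\Rightarrow(ii)\Rightarrow(iii)\Rightarrow(iv)$. Four of these implications are essentially free: $(iv)\Rightarrow(v)$ is immediate from the definition of $\sla$; $(v)\Rightarrow(vi)$ is Corollary~\ref{cor:egdlesla}; $(vi)\Rightarrow(i)$ follows from $\egd(F_3)=3$ (Theorem~\ref{theoFr}), $\egd(H_3)=3$ (Lemma~\ref{lem:H3}), and minor-monotonicity of $\egd$ (Lemma~\ref{lemminor}); and $(i)\Rightarrow(ii)$ is trivial. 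The real work is thus concentrated in the two remaining implications $(ii)\Rightarrow(iii)$ and $(iii)\Rightarrow(iv)$.

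For $(ii)\Rightarrow(iii)$, I will first invoke Lemma~\ref{lemnewfwuef}(ii) to deduce $\omega(G)\le 4$, and then prove freeness by contradiction. If a 3-clique $\{a,b,c\}$ has all three edges non-free, then each edge lies in another maximal clique yielding a witness vertex in $G\setminus\{a,b,c\}$; maximality of $\{a,b,c\}$ prevents any witness from being adjacent to all three of $a,b,c$, so the three witnesses are pairwise distinct and together with $\{a,b,c\}$ span an $F_3$ subgraph. If a 4-clique $K=\{a,b,c,d\}$ has two adjacent non-free edges $\{a,b\}$ and $\{a,c\}$, I will pick witnesses $x,y\notin K$ from the sharing cliques; when $x\neq y$ the vertices $\{a,b,c,d,x,y\}$ induce an $F_3$ around the triangle $\{a,b,c\}$ with $x,y,d$ playing the roles of $v_{12},v_{13},v_{23}$. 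The delicate subcase $x=y$ forces $\{a,b,c,d,x\}$ to span a $K_5-e$; here I use $n\ge 6$ together with 2-connectedness and chordality to locate a sixth vertex $z$ adjacent to at least two vertices of that set (a one-neighbour attachment would create a cut vertex, and a $\{d,x\}$-only attachment would create a chordless 4-cycle through $d,z,x,a$), after which case analysis on the attaching pair yields an $F_3$ subgraph, typically based at a different triangle such as $\{a,b,d\}$ with $v_{12}=x$, $v_{13}=z$ and $v_{23}=c$.

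For $(iii)\Rightarrow(iv)$, I will induct on the number $m$ of maximal cliques of $G$, using the clique tree $\mathcal T$. The base case $m=1$ is immediate since both $K_3$ and $K_4$ are contraction minors of $K_2\stp K_2=K_4$. For $m\ge 2$, pick a leaf $K$ of $\mathcal T$ with unique neighbour $K'$; freeness rules out a triangle intersection (which would give two adjacent non-free edges inside $K$), so $K\cap K'$ is an edge $e$, and the vertices of $K\setminus e$ are simplicial in $G$. Removing them yields a 2-connected chordal free graph $G'$ with $\omega(G')\le 4$ and one fewer maximal clique. I will strengthen the inductive hypothesis to require that the representation of $G'$ as a contraction minor of $T'\stp K_2$ places the shared edge $e$ on a vertical edge of $T'$; attaching a new leaf $t^\star$ at the corresponding tree vertex, and contracting the vertical edge at $t^\star$ when $K$ is a triangle, then recovers $G$ as a contraction minor of the extended tree. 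The strengthened hypothesis can be maintained because freeness forces the non-free edges of every 4-clique to form a matching of size at most two (embedding as the two verticals of the associated $K_4$ in $T'\stp K_2$) and the non-free edges of every 3-clique either to be unique or to share the apex (the contracted) vertex.

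The main obstacle is precisely the preservation of this vertical placement of non-free edges throughout the induction. When two consecutive 3-cliques share a non-free edge, a naive extension may land the shared edge on a horizontal or slant position of $T'\stp K_2$, from which no further extension recovers verticality. I plan to resolve this by choosing, once and for all via a traversal of $\mathcal T$, the vertical matching on every 4-clique and the apex on every 3-clique; freeness guarantees that such globally consistent choices exist, and occasional horizontal contractions inside intermediate $K_4$'s (as illustrated by the chain of triangles $\{a,b,c\}$--$\{a,c,d\}$--$\{c,d,e\}$--$\{d,e,f\}$, which sits inside $P_5\stp K_2$ after contracting the two leaf verticals together with two horizontal edges in the middle $K_4$'s) then realise $G$.
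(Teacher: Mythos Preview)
Your cyclic chain and the four ``free'' implications match the paper exactly. The two substantive implications are where things diverge.

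For $(ii)\Rightarrow(iii)$ your argument is essentially the paper's, just organised differently. The paper first rules out any $K_5\setminus e$ subgraph (using $n\ge 6$, 2-connectedness and chordality to produce a sixth vertex adjacent to two adjacent vertices of the $K_5\setminus e$, hence an $F_3$), and only then treats non-free 4-cliques; your $x=y$ subcase then dies immediately. Your direct treatment of $x=y$ is fine in principle, but the sentence ``a one-neighbour attachment would create a cut vertex'' is not correct as stated: a vertex $z\notin S=\{a,b,c,d,x\}$ with a single neighbour $v\in S$ need not make $v$ a cut vertex, since $z$ may have further neighbours outside $S$. What actually forces some $z$ with two adjacent neighbours in $S$ is chordality: for a component $C$ of $G\setminus S$ pick distinct $p,q\in N(C)\cap S$; if $\{p,q\}\ne\{d,x\}$ they are adjacent, and a shortest $p$--$q$ path through $C$ of length $\ge 3$ together with the edge $\{p,q\}$ would be a chordless cycle. (Your $\{d,x\}$-only case is handled correctly.) So the gap is small and easily patched, but the paper's ordering is cleaner.

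For $(iii)\Rightarrow(iv)$ you take a genuinely different route, and here the gap is real. Your leaf-removal induction needs the shared edge $e=K\cap K'$ to land on a vertical of $T'\stp K_2$, but $e$ may become \emph{free} in $G'$ (precisely when $K,K'$ were the only two maximal cliques containing $e$), so an inductive hypothesis phrased in terms of non-free edges of $G'$ says nothing about $e$. Your proposed fix --- a global up-front assignment of verticals and apices via a traversal of the clique tree --- is the right instinct, but you have not shown that a globally consistent assignment exists nor that it yields a valid contraction-minor embedding; the example with the chain of four triangles illustrates the difficulty rather than resolving it. The paper bypasses all of this with a short device: if every maximal clique is a $K_4$, a straightforward induction gives $G=T\stp K_2$ exactly (the two sides of each free $K_4$ become tree vertices); and if there is a maximal $3$-clique $\{a,b,c\}$ with $\{b,c\}$ free and $\{a,b\}$ a cutset, one \emph{splits} the apex $a$ into $a',a''$ across this cutset, replacing the triangle by the free $4$-clique $\{a',a'',b,c\}$. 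The resulting $G'$ is still free with one fewer maximal $3$-clique and $G=G'/\{a',a''\}$, so iterating reduces to the all-$K_4$ case. This blow-up is exactly what your ``apex is the contracted vertex'' remark is reaching for, but executed once and for all rather than threaded through an induction.
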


\begin{proof}The implication
 $(i) \Rightarrow (ii)$ is clear and the implications $(iv) \Rightarrow (v) \Rightarrow (vi) \Rightarrow (i)$ follow  from earlier results. 

\noindent $(ii) \Rightarrow (iii)$: Assume that $(ii)$ holds. By  Lemma \ref{lemnewfwuef} $(ii)$ it follows that   $\omega(G)~\le 4$. 
Our first goal is to show that $G$ does not contain clique 3-sums of $K_4$'s, i.e., it does not contain a $K_5 \setminus e$ subgraph. 
For this, assume that 
  $G[U]=K_5\backslash e$ for some $U\subseteq V(G)$.
As  $|V(G)|\ge  6$ and $G$ is 2-connected chordal, there exists  a node $u\not\in U$  which is adjacent  to  two adjacent nodes of   $U$. Then,   one can find a $F_3$ subgraph in $G$, a contradiction. 
Therefore, $G$ is a clique 2-sum of $K_3$'s and $K_4$'s. 
We  now show that each of them  is free.

Suppose first that    $C=\{a,b,c\}$ is a maximal 3-clique which is not free. Then, there exist nodes $u,v,w\not\in C$ such that $\{a,b,u\}$, $\{a,c,v\}$, $\{b,c,w\}$ are cliques in $G$. Moreover, $u,v,w$ are pairwise distinct (if $u=v$ then $C\cup\{u\}$ is a clique, contradicting maximality of $C$) and we find a $F_3$ subgraph in $G$.

Suppose now that $C=\{a,b,c,d\}$ is a 4-clique which is not free and,  say,  both edges $\{a,b\}$ and $\{a,c\}$ are non-free. Then, there exist nodes $u,v\not\in C$ such that $\{a,b,u\}$ and $\{a,c,v\}$ are cliques. Moreover, $u\ne v$ (else we find a $K_5\backslash e$ subgraph) and thus  we find a $F_3$ subgraph in $G$. Thus (iii) holds.

\smallskip
\noindent $(iii) \Rightarrow (iv):$  Assume that $G$ is free, $G\ne K_4,K_3$  (else we are done). 
When all maximal cliques are 4-cliques, it is easy to show using induction on $|V(G)|$ that $G=T\stp K_2$,
 where $T$ is a tree and each side of a 4-clique of $G$  corresponds to a node of $T$.
 
 \ignore{
First we consider the case when all maximal cliques  are 4-cliques.

We show  using induction on $|V(G)|$ the following property:
{\em If $G$ is a clique 2-sum of free $K_4$'s then $G=T\stp K_2$,
 where $T$ is a tree and each side of a 4-clique of $G$  corresponds to a node of $T$.}
For this, pick a 4-clique $C=\{a,b,c,d\}$ in $G$ such that 
 $G$ is the clique 2-sum of $C$ and a subgraph $G'$, with  $C\cap V(G')=\{c,d\}$.  As $G'$ is  free,
by the induction assumption, $G'=T'\stp K_2$ where each side of a 4-clique of $G'$ corresponds to a node of $T'$.
Let $C'=\{c,d,x,y\}$ be a clique in $G'$ containing $\{c,d\}$. Then $C'$ has a non-free edge (otherwise $G'=K_4$ and we are done).  Moreover, $\{c,d\}$ is a side of $C'$ (otherwise, say $\{c,x\}$ is non-free in $G'$,   then   $\{c,d\}$ and $\{c,x\}$ are non-free in $G$ and $C'$ is not free in $G$).
Let $i$ be the node of $T'$ corresponding to $\{c,d\}$. Add a new node $i_0$ to $T'$ and make it adjacent to $i$, this gives a tree $T$ where the side $\{a,b\}$ corresponds to the node $i_0$ and 
$G=T\stp K_2$.
}
\ignore{
{\bf ***1st version***}\\
As $G$ is free each 4-clique is the union of its two sides.  For each pair of nodes of $G$ which is the side of some 4-clique, pick a node of $T$. Distinct such side pairs are disjoint (as $G$ is free). Connect  two nodes of $T$ if the union of the corresponding pairs  is a 4-clique. Then $T$ is a tree and $G=T\stp K_2$.\\
{\bf ****}\\
{\bf ***2nd version***}\\
Clearly, ${G}$ is a clique-2 sum of $K_4$'s which are all free. This property permits to label the nodes of any 4-clique $C$ in $G$ as $\{i,j,i',j'\}$ in such a way that only $(i,i'),(j,j')$ may be non-free. Then ${G}=T\stp K_2$, where nodes of $T$ correspond to edges $(i,i')$ of $G$ and two nodes of $T$ is connected if the corresponding edges of $G$ are in the same $K_4$.\\
{\bf ****}
}

Assume  now that $G$ has a maximal 3-clique $C=\{a,b,c\}$. Say, $\{b,c\}$ is free and  $\{a,b\}$ is a cutset. 
Write $V(G)=V'\cup V''\cup \{a,b\}$, where $V''$ is the (vertex set of the) component of $G\backslash \{a,b\}$ containing $c$, and $V'$ is the union of the other components.
Now replace node $a$ by two new nodes $a',a''$ and replace $C$ by the 4-clique  $C'=\{a',a'',b,c\}$. Moreover, 
replace each edge $\{u,a\}$ by $\{u,a'\}$ if $u\in V'$ and by $\{u,a''\}$ if $u\in V''$.
Let $G'$ be the graph obtained in this way.  Then $G'\in \CC$  is free, $G'$  has one less maximal 3-clique than $G$, 
and $G=G'\slash \{a',a''\}$. 
Iterating, we obtain a graph $\hG$ which is a clique 2-sum of free $K_4$'s and contains $G$ as a contraction minor. By the above,  $\hG=T\stp K_2$ and thus $G$ is a contraction minor of $T\stp K_2$.
\end{proof}

\subsection{Structure of  the graphs with no $F_3$ and  $K_4$-minor}\label{secnoK4}
 
 In this section we investigate the structure of the graphs with no $F_3$ or $K_4$-minors. 
 We start with two technical lemmas.
 
 \begin{lemma} \label{lem_minor_cut}
Let $G$ and $M$ be two  $2$-connected graphs, let $\{x,y\}\not\in E(G)$ be a cutset in $G$, and let 
 $r\ge 2$ be the number of components of $G\setminus\{x,y\}$.
 \begin{itemize}
\item[(i)] 
Assume that 
 $G\in\mathcal{F}(M)$, but the graph $G+\{x,y\}$ has a $M$-minor with $M$-partition $\{V_i: i\in V(M)\}$. If $x\in V_i$ and  $y\in V_j$,  then   $M\setminus\{i,j\}$ has at least $r\ge 2$ components (and thus  $i\neq j$).   
 \item[(ii)]
 Assume that  $M$ does not have  two adjacent nodes forming  a cutset in $M$.
If $G \in {\mathcal F}(M)$, then   $G+\{x,y\} \in {\mathcal F} (M)$.
\end{itemize}
\end{lemma}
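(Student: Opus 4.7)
The plan is to analyze the $M$-partition $\{V_k: k \in V(M)\}$ of $G + \{x,y\}$ carefully, modify it if necessary so that no component of $G \setminus \{x,y\}$ is entirely absorbed by $V_i \cup V_j$, and then count components to deduce the desired bound. First I distinguish two cases according to whether $i = j$. If $i \neq j$, then $\{x,y\}$ joins the distinct classes $V_i$ and $V_j$; removing it does not disconnect any single class, so any additional edge of $G$ between $V_i$ and $V_j$ would already realise the edge $\{i,j\}\in E(M)$ and give an $M$-minor of $G$, contradicting $G \in \mathcal{F}(M)$. Hence no edge of $G$ joins $V_i$ and $V_j$. If $i = j$, then every edge of $M$ is realised by an edge of $G$ (since $\{x,y\}$ is internal to $V_i$), and the only obstruction to an $M$-minor of $G$ is disconnection of $V_i$; so $V_i$ splits in $G$ into exactly two components $U_1\ni x$ and $U_2 \ni y$ with no $G$-edge between them.

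Next I shrink the partition so that every component $C_s$ of $G\setminus \{x,y\}$ meets $V(G)\setminus (V_i\cup V_j)$. Suppose $C_s\subseteq V_i\cup V_j$; since there is no $G$-edge between the two sides, the connected set $C_s$ lies entirely in one side ($V_i$ or $V_j$ in the first case, $U_1$ or $U_2$ in the second). I replace that side by its complement in $C_s$. Connectivity is preserved: any path in $G$ inside the side from a vertex $v \in V_i \cap C_t$ (with $t \neq s$) to $x$ must stay within one component of $G\setminus\{x,y\}$ before reaching $x$, because the only other cut vertex $y$ lies outside the side; hence the path lives in $C_t$ and avoids $C_s$. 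Moreover no edge of $M$ is destroyed, since any edge from $C_s\cap V_i$ to some $V_l$ with $l\neq i$ would force $V_l\subseteq C_s\subseteq V_i$, contradicting disjointness. Iterating this shrinking terminates and yields the desired property.

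For the counting step, set $G'' = G\setminus(V_i\cup V_j)$. Each component of $G''$ lies inside some $C_s$ (because $G''\subseteq G\setminus\{x,y\}$), and by the modification every $C_s$ meets $G''$, so $G''$ has at least $r$ components. For each $k\neq i,j$, the class $V_k$ is connected and disjoint from $\{x,y\}$, so it is contained in a single component of $G''$; furthermore every vertex of $G''$ belongs to some such $V_k$, and every edge of $M\setminus\{i,j\}$ is realised by an edge of $G$ lying in $G''$. These facts imply that the induced map from the components of $M\setminus\{i,j\}$ to the components of $G''$ is well-defined and surjective, giving at least $r$ components for $M\setminus\{i,j\}$. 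Since $M$ is 2-connected, $M\setminus\{v\}$ is connected for every $v\in V(M)$, so if $i=j$ then $M\setminus\{i,j\}$ would be connected, contradicting $r\ge 2$; this proves $i\neq j$.

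Part (ii) follows at once: if $G+\{x,y\}$ has an $M$-minor then by (i) we get $i\neq j$, at least two components of $M\setminus\{i,j\}$, and $\{i,j\}\in E(M)$ (since the added edge $\{x,y\}$ is needed to realise it). Thus $M$ contains two adjacent vertices whose removal disconnects $M$, contradicting the hypothesis on $M$. The main obstacle is the modification step: checking that removing an absorbed component from a partition class keeps the class connected hinges on the fact that a path in $G$ inside the class can cross the cut $\{x,y\}$ only at the single cut vertex that lies in the class, so it cannot loop back through the absorbed component.
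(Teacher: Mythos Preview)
Your argument for (ii) is fine, but the proof of (i) has a genuine gap in the modification step.

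When you remove an absorbed component $C_s$ from its side (say from $V_i$), the vertices of $C_s$ are no longer in any class: you now have a minor model of $M$ in $G+\{x,y\}$, but not a partition of $V(G)$. Later you set $G'' = G\setminus(V_i\cup V_j)$ and assert that ``every vertex of $G''$ belongs to some such $V_k$''. This is false precisely for the orphaned vertices of $C_s$. Worse, $C_s$ is a \emph{full} component of $G''$ (its only neighbours in $G$ are $x$ and $y$, both outside $G''$), so the map from components of $M\setminus\{i,j\}$ to components of $G''$ misses it and surjectivity fails. The connectivity check you highlight as the ``main obstacle'' is not the problem; the problem is that shrinking a class throws vertices away.

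What is missing is the $2$-connectivity of $G$, which you never invoke in part (i). Because $G$ is $2$-connected, every component $C_s$ of $G\setminus\{x,y\}$ sends an edge to \emph{both} $x$ and $y$ (otherwise one of them would be a cut node). Combined with your own observation that no $G$-edge joins the two sides, this already excludes $C_s$ from lying entirely in one side: if $C_s\subseteq V_i$ (case $i\ne j$), the edge from $C_s$ to $y\in V_j$ is a forbidden $G$-edge between $V_i$ and $V_j$; similarly in the case $i=j$ it would connect $U_1$ to $U_2$. Hence the modification is vacuous and your counting argument goes through with the original full partition. This is exactly the paper's approach, phrased slightly differently: it uses $2$-connectivity to produce an $x$--$y$ path $P_s$ in $G[C_s\cup\{x,y\}]$ and notes that if $C_s\subseteq V_i\cup V_j$ then $P_s$ replaces the added edge, yielding an $M$-minor of $G$ directly.
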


\begin{proof}(i) Let $C_1,\ldots,C_r\subseteq V(G)$  be the node sets of the components of $G\setminus \{x,y\}$.
As $G$ is $2$-connected,  there is an $x-y$  path $P_s$  in $G[C_s\cup \{x,y\}]$ for each $s\in [r]$. Notice that $P_s\ne \{x,y\}$ since $P_s$ is a path in $G$.  Our first goal is to show that  every component of $M\setminus\{i,j\}$ corresponds to exactly one component of $G\setminus\{x,y\}$. For this, let $U$ be a component of $M\setminus\{i,j\}$. By the definition of the $M$-partition,  the graph $G[\bigcup_{k\in U} V_k]$ is connected.
As   $x,y\not\in \bigcup_{k\in U} V_k$,   we deduce that  $\bigcup_{k\in U} V_k \subseteq C_s$ for some $s\in[r]$. We can now conclude the proof. Assume for contradiction that  $M\setminus\{i,j\}$ has less than $r$ components.
Then there is at least one component $C_s$  which does not correspond to any component of $M\setminus\{i,j\}$ which means that 
 $(\bigcup_{k\neq i,j} V_k) \cap C_s = \emptyset$. Indeed, if $V_k\cap C_s$ for some $k\ne i,j$ then  since $C_s$ is a connected component of $G\setminus\{x,y\}$ it follows that $\cup_{\lambda \in U}V_{\lambda} \subseteq C_s$, where 
 $U$ is the component of $M\setminus \{i,j\}$ that contains $k$.  Summarizing we know that  $C_s \subseteq V_i\cup V_j$. Hence the path $P_s$ is contained in $G[V_i\cup V_j]$,  thus 
$\{V_i:i\in V(M)\}$ remains an $M$-partition of $G$ (recall that $P_s\ne \{x,y\}$)  and we find a $M$-minor in $G$, a contradiction.
Therefore,  $M\setminus\{i,j\}$ has at least $r\ge 2$ components. This implies that  $\{i,j\}$ is a cutset of $M$ since $M$ is 2-connected it follows that $i\ne j$. 

(ii) Assume  $G+\{x,y\}$ has a $M$-minor, with corresponding $M$-partition $\{V_i: i\in V(M)\}$.
 By (i), the nodes $x$ and $y$ belong to two distinct classes  $V_i$ and $V_j$ 
 and  $\{i,j\}$ is a cutset in $M$. By the hypothesis, this implies that  $\{i,j\}\not\in E(M)$
and thus  $M$ is a minor of $G$, a contradiction.
\end{proof}

%

%
We continue with a lemma that will be essential for the next theorem.

\begin{lemma}\label{lem3paths}
Let $G\in \FF(K_4)$ be a 2-connected graph and  let $\{x,y\}\not\in E(G)$.
If there are at least three (internally vertex) disjoint paths from $x$ to $y$, then $\{x,y\}$ is a cutset and $G\backslash \{x,y\}$ has at least 3 components.
\end{lemma}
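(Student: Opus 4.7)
The plan is to argue by contradiction: assume that $G\setminus\{x,y\}$ has at most two components and use the three internally disjoint paths to build a $K_4$ subdivision, contradicting $G\in\FF(K_4)$. Since $\{x,y\}\notin E(G)$, each of the three internally disjoint paths $P_1,P_2,P_3$ from $x$ to $y$ has length at least $2$, so the sets $T_i:=V(P_i)\setminus\{x,y\}$ are nonempty, and each $G[T_i]$ is connected in $H:=G\setminus\{x,y\}$. If $H$ had at most two components, then by the pigeonhole principle two of the sets $T_i$, say $T_1$ and $T_2$, would lie in the same component of $H$.

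The key step is then to produce a path $R$ in $H$ from some $u\in T_1$ to some $w\in T_2$ whose internal vertices avoid $T_1\cup T_2\cup T_3$. I would first take a shortest path in $H$ between $T_1$ and $T_2$; by minimality its internal vertices lie outside $T_1\cup T_2$, but they could still meet $T_3$. If they do, let $v$ be the first internal vertex of this path lying in $T_3$; then the initial segment is a path from $T_1$ to $T_3$ internally disjoint from $T_1\cup T_2\cup T_3$, so after renaming indices we obtain a path $R$ of the desired form connecting two of the $T_i$'s and avoiding the third internally. This shortest-path trick is the only subtle point; all other details are bookkeeping.

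Given such $R$ with endpoints $u\in V(P_1)\setminus\{x,y\}$ and $w\in V(P_2)\setminus\{x,y\}$, I would exhibit a $K_4$ subdivision with branch vertices $x,y,u,w$ by using the six paths: the two subpaths of $P_1$ connecting $u$ to $x$ and to $y$, the two subpaths of $P_2$ connecting $w$ to $x$ and to $y$, the entire path $P_3$ from $x$ to $y$, and the path $R$ from $u$ to $w$. Pairwise internal disjointness follows from the choices made: the subpaths of $P_1,P_2$ are internally disjoint because $P_1,P_2$ are; $P_3$ is internally disjoint from all $P_i$-subpaths because the $P_i$'s are internally disjoint; and $R$ is internally disjoint from $P_1\cup P_2\cup P_3$ by construction and from $\{x,y\}$ since $R\subseteq H$. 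This gives a $K_4$ minor in $G$, contradicting $G\in\FF(K_4)$.

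Hence $H$ has at least three components, which simultaneously proves that $\{x,y\}$ is a cutset (since already two components suffice) and that $G\setminus\{x,y\}$ has at least three components, as required. The main obstacle is precisely the disjointness of the connecting path $R$ from the third path $P_3$, which is why the two-stage shortest-path argument is needed; once $R$ is in hand, the $K_4$-subdivision construction is routine.
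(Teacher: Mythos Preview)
Your proof is correct and follows essentially the same approach as the paper's own argument: show that if two of the sets $T_i=V(P_i)\setminus\{x,y\}$ lie in a common component of $G\setminus\{x,y\}$, then $G$ contains a homeomorph of $K_4$. The paper's proof is a one-line sketch that does not spell out the construction of the $K_4$ subdivision or the issue of the connecting path possibly meeting the third $P_i$; your two-stage shortest-path trick is exactly the clean way to fill in that gap.
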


\begin{proof}Let $P_1,P_2,P_3$ be  distinct vertex disjoint paths from $x$ to $y$.  Then $P_1\setminus \{x,y\}$, $P_2\setminus\{x,y\}$ and $P_3\setminus \{x,y\}$ lie in distinct components of $G\setminus \{x,y\}$, for otherwise  $G$  would contain a homeomorph of $K_4$.
\end{proof}

We  now arrive at  the main result of this section.

\begin{theorem}\label{theonoF3K4}
Let  $G\in \FFKc $ be a 2-connected graph on $n\ge 6$ nodes. Then, there exists a chordal graph $Q\in \FF(F_3,K_4)$ containing   $G$ as a subgraph.
\end{theorem}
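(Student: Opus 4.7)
The plan is to argue by induction on the number of non-edges of $G$. If $G$ is chordal, take $Q=G$; otherwise, the goal is to locate a single non-edge $\{x,y\}$ whose addition preserves 2-connectedness and membership in $\mathcal{F}(F_3, K_4)$, and then apply the induction hypothesis to $G+\{x,y\}$.

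The enabling observation is that \emph{for every pair $\{i,j\}\subseteq V(F_3)$ the graph $F_3\setminus\{i,j\}$ has at most two components}. This is a finite check: up to automorphisms of $F_3$ there are only four orbits of pairs (apex/apex, apex/incident-subdivision, apex/opposite-subdivision, subdivision/subdivision), and in each case the remaining four-vertex graph is either connected or splits into exactly two components. Combined with Lemma~\ref{lem_minor_cut}(i), this implies that any non-edge cutset $\{x,y\}$ of $G$ for which $G\setminus\{x,y\}$ has at least three components satisfies $G+\{x,y\}\in\mathcal{F}(F_3)$. Since $K_4$ is 3-connected and hence has no 2-cut at all, Lemma~\ref{lem_minor_cut}(ii) gives $G+\{x,y\}\in\mathcal{F}(K_4)$ for any non-edge cutset of $G$. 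By Lemma~\ref{lem3paths}, inside $G\in\mathcal{F}(K_4)$ any non-edge $\{x,y\}$ with $\kappa_G(x,y)\ge 3$ is automatically such a cutset; hence it suffices to produce a non-edge of connectivity at least three in $G$.

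Since $G$ is not chordal, pick an induced cycle $C$ of length at least four. For every pair of non-adjacent vertices $x,y\in V(C)$ the two arcs of $C$ provide two internally disjoint $x$-$y$ paths. In the \emph{easy subcase} some such pair admits a third internally disjoint $x$-$y$ path in $G$: the non-edge $\{x,y\}$ is then safe to add by the reasoning above, strictly reduces the non-edge count, and the induction hypothesis finishes. In the \emph{hard subcase} every non-adjacent pair on every induced cycle of length $\ge 4$ has connectivity exactly $2$; Menger then supplies a 2-cut $\{u,v\}$ of $G$ with $u,v$ on distinct arcs of $C$, and since $C$ is induced the pair $\{u,v\}$ is itself a non-edge. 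I then split $G$ along $\{u,v\}$ into the 2-connected pieces $G_i=G[C_i\cup\{u,v\}]+\{u,v\}$ indexed by the components $C_i$ of $G\setminus\{u,v\}$. Each $G_i$ is a minor of $G$ (contract a $u$-$v$ path through any other component), hence lies in $\mathcal{F}(F_3, K_4)$, and has strictly fewer vertices than $G$. Apply induction to each $G_i$ (the small cases $|V(G_i)|<6$ are handled directly by triangulation, since $F_3$ has six vertices and so cannot be a subgraph of $G_i$) to obtain chordal extensions $Q_i\in\mathcal{F}(F_3, K_4)$, and form $Q$ as the clique 2-sum of the $Q_i$ along the edge $\{u,v\}$. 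Chordality and $K_4$-minor-freeness of $Q$ then follow from standard clique-sum properties (clique sums of chordal graphs along cliques are chordal, and clique sums of series-parallel graphs along an edge are series-parallel).

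The main obstacle lies in verifying $F_3$-minor-freeness of $Q$, which is not automatic: two independently $F_3$-free chordal graphs glued along an edge can still contain an $F_3$, namely when the central triangle $\{u,v,w\}$ with $w\in V(Q_1)$ has its pendants on the edges $\{u,w\},\{v,w\}$ supplied inside $Q_1$ and its pendant on $\{u,v\}$ supplied from $Q_2$. To circumvent this I would strengthen the inductive hypothesis to require that the chordal extension $Q_i$ can always be chosen so that the distinguished edge $\{u,v\}$ is \emph{free} in the sense of Definition~\ref{def_free} — that is, either $\{u,v\}$ lies in no triangle of $Q_i$, or it lies in a 4-clique of $Q_i$ as one of the two sides. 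The structural classification of the chordal case in Theorem~\ref{theofree} (free chordal graphs are precisely the contraction minors of $T\boxtimes K_2$) should supply the flexibility to enforce this constraint; carrying out this strengthened induction is the heart of the proof and is where I expect the main technical work to lie.
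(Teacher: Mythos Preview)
Your ``easy subcase'' is correct and coincides with the first phase of the paper's proof: add every non-edge $\{x,y\}$ with three internally disjoint $x$--$y$ paths, using Lemma~\ref{lem3paths} and Lemma~\ref{lem_minor_cut}. After this phase the paper does \emph{not} recurse on a $2$-cut; instead it analyzes the resulting graph $\hG$ directly. It shows that any two circuits meeting in two or more nodes meet in two adjacent nodes, defines a ``busy'' edge of a chordless circuit $C$ as one lying in another circuit, proves $C$ has at most two busy edges, and then triangulates each chordless circuit by fanning from the endpoints of the busy edges. Freeness of every resulting triangle is then checked by hand. So the gluing issue you isolate simply never arises in the paper's argument.

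Your ``hard subcase'' has a genuine gap, and the strengthening you propose does not close it. First, your stated condition on $\{u,v\}$ (``lies in no triangle of $Q_i$, or lies in a $4$-clique as one of the two sides'') is vacuous here: $Q_i\in\FF(K_4)$ has no $4$-cliques, while in a $2$-connected chordal graph with $\omega\le 3$ and at least three vertices every edge lies in some triangle. Second, even under the correct reading (``$\{u,v\}$ is free in $Q_i$'', i.e.\ lies in exactly one maximal clique), the condition is still insufficient. Take $Q_1$ to be the clique $2$-sum of the three triangles $\{u,v,w\}$, $\{u,w,a\}$, $\{v,w,b\}$: this is chordal, $2$-connected, in $\FF(F_3,K_4)$, and $\{u,v\}$ is free. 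But both $\{u,w\}$ and $\{v,w\}$ are non-free in $Q_1$, so after gluing any $Q_2$ (which necessarily contributes a triangle on $\{u,v\}$) the triangle $\{u,v,w\}$ has all three edges non-free in $Q$, producing an $F_3$ subgraph. What you would actually need is that in the unique triangle $\{u,v,w_i\}$ of $Q_i$ containing $\{u,v\}$, one of the \emph{other} two edges is free; you neither state nor prove that such a $Q_i$ can always be found, and this is precisely the work the paper's direct triangulation does in a different way.
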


\begin{proof}Let $G$ be a 2-connected graph in  $\FF(F_3,K_4)$.
As a first step, consider  $\{x,y\}\not\in E(G)$ such that  there exist at least three disjoint paths in $G$  from $x$ to $y$. Then,  Lemma~\ref{lem3paths} implies that $\{x,y\}$ is a cutset of $G$ and $G\setminus\{x,y\}$ has at least three components. As a first step we show  that  we can add the edge $\{x,y\}$ without creating a $K_4$ or $F_3$-minor, i.e., $G+\{x,y\}\in \FF(F_3,K_4)$. 

As $\{x,y\}$ is a cutset, Lemma~\ref{lem_minor_cut} (ii) applied  for $M=K_4$ gives that $G+\{x,y\}$ does not have a $K_4$ minor. Assume for contradiction that $G+\{x,y\}$ has an $F_3$ minor. Again, Lemma~\ref{lem_minor_cut} (i) applied for $M=F_3$ implies that $x\in V_i, y\in V_j,$ where $F_3\setminus \{i,j\}$ has at least 3 components. Clearly there is no such pair of vertices in $F_3$ so we arrived at a contradiction. 
Consequently,  we can add edges iteratively until we obtain  a graph $\hG\in \FF(F_3,K_4)$ containing $G$ as a subgraph and satisfying:
\begin{equation}\label{eq3paths}
\forall \{x,y\}\not\in E(\hG)\  \text{ there are at most two disjoint } x-y\text{ paths  in } \hG.
\end{equation}
If $\hG$ is chordal we are done.
So consider a chordless circuit $C$ in $\hG$. Note that any circuit $C'$ distinct from $C$, which  meets $C$ in at least two nodes, meets $C$ in exactly two nodes (if they meet in at least 3 nodes then we can find a $F_3$ minor) that are adjacent (if they are not adjacent then there exist three internally vertex disjoint paths between them, contradicting  \eqref{eq3paths}).
Call an edge of $C$ {\em busy} if it is contained in some  circuit $C'\ne C$. 
If $e_1 \ne e_2$ are two busy edges of $C$ and $C_i\ne C$ is a circuit containing $e_i$, then $C_1,C_2$ are (internally) disjoint (use (\ref{eq3paths})).
Hence $C$ can have at most two busy edges, for otherwise one would find a $F_3$-minor in $\hG$.

We now show how to triangulate $C$ without creating a $K_4$ or $F_3$-minor: If $C$ has two busy edges denoted, say, $\{v_1,v_2\}$ and $\{v_k,v_{k+1}\}$ (possibly $k=2$), then we add the edges $\{v_1,v_i\}$ for $i\in \{3,\ldots,k\}$ and the edges $\{v_k,v_i\}$ for $i\in \{k+2,\ldots,|C|\}$, see Figure \ref{fig_circ_triang} a). If $C$ has only one busy edge $\{v_1,v_2\}$, add the edges $\{v_1,v_i\}$ for $i\in \{3,\ldots, |C|-1\}$, see Figure \ref{fig_circ_triang} b). 
(If $C$ has no busy edge then $G=C$,  triangulate from any node and we are done).
 \begin{figure}[h!]
\bc
  \includegraphics[scale=0.55]{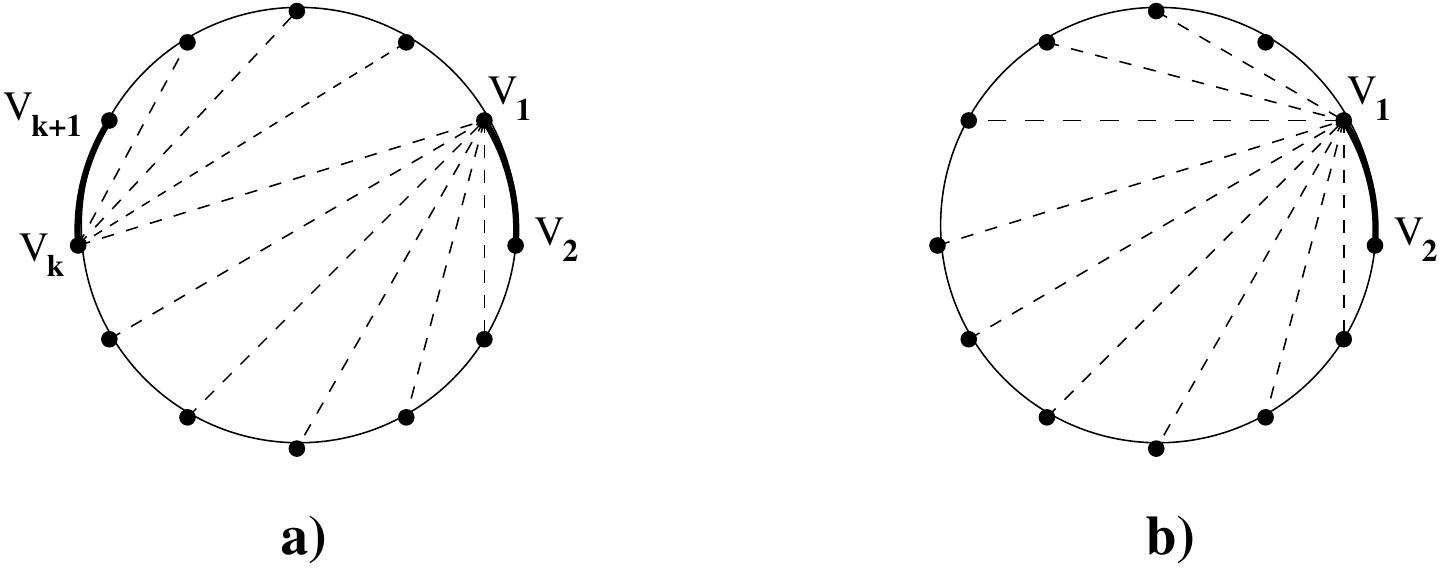}
\caption{Triangulating 
 a chordless circuit with a) two or b) one busy edge.}\label{fig_circ_triang}
\ec
\end{figure}

Let $Q$ denote the graph obtained from $\hG$  by  triangulating all its chordless circuits in this way.
Hence $Q$ is a chordal extension of $\hG$ (and thus of $G$). We show that $Q\in \FF(F_3,K_4)$.
First we see that $Q\in \FF(K_4)$ by applying iteratively Lemma \ref{lem_minor_cut} (ii) (for $M=K_4$): For each $i\in \{3,\ldots,k\}$, $\{v_1,v_i\}$ is a cutset of $\hG$ and of $\hG+\{\{v_1,v_j\}: j\in \{3,\ldots, i-1\}\}$ (and analogously for the other added edges $\{v_k,v_i\}$).
Hence $Q$ is a clique 2-sum of triangles. We now verify that each triangle is free which will conclude the proof, using Theorem \ref{theofree}.

For this let $\{a,b,c\}$ be a triangle in $Q$. First note that if (say) $\{a,b\} \in E(Q)\setminus E(\hG)$, then $a,b,c$ lie on a common chordless circuit $C$ of $\hG$. 
Indeed, let $C$ be a chordless circuit of $\hG$ containing $a,b$ and assume $c\not\in C$.
By (\ref{eq3paths}), $\hG \setminus\{a,b\}$ has at most two components and thus there is a path from $c$ to one of the two paths composing $C\setminus \{a,b\}$. Together with the triangle $\{a,b,c\}$ this gives a homeomorph of $K_4$ in $Q$, contradicting $Q\in \FF(K_4)$, just shown above.
Hence the triangle $\{a,b,c\}$ lies in  $C$ and thus has a free edge.

Suppose now that $\{a,b,c\}$ is a triangle contained in $\hG$.  
If it is not free then  there is a $F_3$ on $\{a,b,c,x,y,z\}$ where $x$ (resp., $y$, and $z$) is adjacent to $a,b$ (resp., $a,c$, and $b,c$).
Say $\{x,a\}\in E(Q)\setminus E(\hG)$ (as there is no $F_3$ in $\hG$). Then  $x,a,b$ lie on a chordless circuit $C$ of $\hG$ and $\{x,b\}\in E(\hG)$ (since $\{a,b\}$ is a busy edge). 
Moreover, $c,y,z\not\in C$ for otherwise we get a $K_4$-minor in $Q$.
Then  delete the edge $\{x,a\}$ and replace it by the $\{x,a\}$-path along $C$. Do the same for any other edge of $E(Q)\setminus E(\hG)$  connecting $y$ and $z$ to $\{a,b,c\}$. After that we get a $F_3$-minor in $\hG$, a contradiction.
\end{proof}

\subsection{Structure of  the graphs with no $F_3$ and   $H_3$-minor}\label{secnoH3}

Here   we investigate the graphs  $G$ in the class $ \FF(F_3,H_3)$.  By the results in Section \ref{secnoK4} we may assume that $G$   contains some homeomorph of $K_4$. 
 Figure~\ref{fig_homK4}  shows a homeomorph of $K_4$, where the original nodes are denoted as 1,2,3,4 and called its {\em corners}, and   the wiggled lines correspond to  subdivided edges (i.e., to paths $P_{ij}$ between the corners $i\ne j\in [4]$).
 \begin{figure}[!h]
\bc  \includegraphics[scale=0.6]{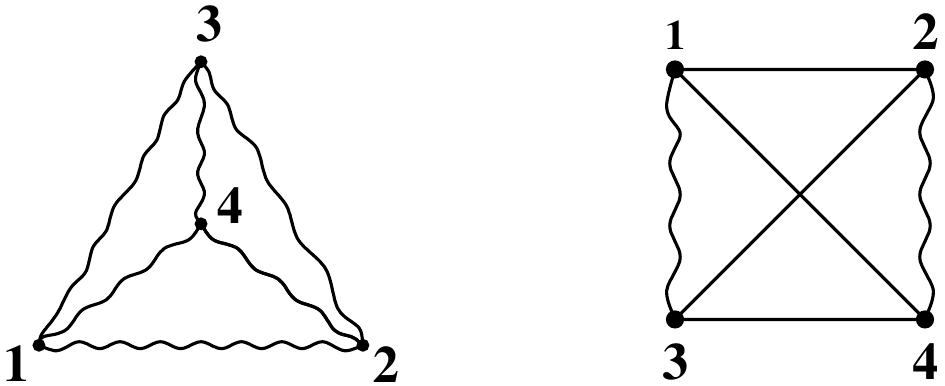}
 \caption{A homeomorph of $K_4$ and its two sides (cf. Lemma \ref{lem2sides})}\label{fig_homK4}
\ec
 \end{figure}
 
To help the reader visualize $F_3$ and $H_3$  we use  Figure \ref{fig_defF6G6}. Notice the special role of node $5$ in $H_3$  (denoted by a square) and of the (dashed) triangle $\{1,2,3\}$. 

\begin{figure}[h!]
\bc  \includegraphics[scale=0.7]{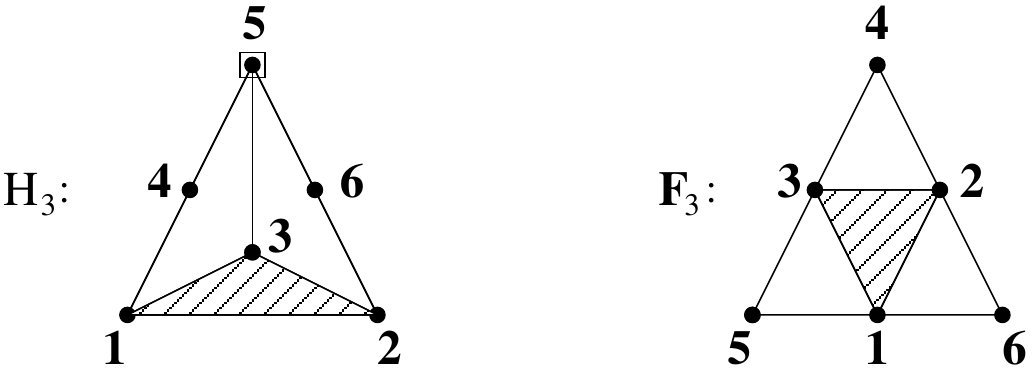}
\caption{The graphs $H_3$ and $F_3$.}\label{fig_defF6G6}
\ec
\end{figure}
 
\medskip
The starting point of the proof is to investigate the structure of homeomorphs of $K_4$ in a graph of $\FF(H_3)$.

\begin{lemma}\label{lem2sides}
Let $G$ be a 2-connected graph in $\mathcal{F}(H_3)$ on $n\ge 6$ nodes and let $H$ be a homeomorph of $K_4$ contained  in $G$.
Then there is a partition of the corner nodes of $H$ into $\{1,3\}$ and $\{2,4\}$ for which the following holds.
\begin{itemize}
\item[(i)] Only the paths $P_{13}$ and $P_{24}$ can have more than two nodes.
\item[(ii)] 
 Every component of $G\setminus H$ is connected to $P_{13}$ or to $P_{24}$.
 \end{itemize}
 Then $P_{13}$ and $P_{24}$ are called the two {\em sides} of $H$ (cf. Figure \ref{fig_homK4}).
\end{lemma}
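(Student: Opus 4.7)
I will prove (i) by ruling out the configuration of two subdivided paths of $H$ sharing a corner, and then derive (ii) as a direct consequence of (i) together with the connectedness of $G$. The driving observation is that the graph $H_3$ from Section~\ref{secHr} is precisely $K_4$ with two \emph{adjacent} edges each subdivided by a single internal node.

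\textbf{Step 1 (proof of (i)).} Suppose, toward contradiction, that two paths $P_{ij}$ and $P_{ik}$ of $H$, sharing a common corner $i$, both contain at least one internal node; pick an internal node $a$ on $P_{ij}$ and an internal node $b$ on $P_{ik}$, and let $\ell$ denote the remaining corner of $H$. Contracting all other internal nodes of $H$ back to their endpoints then exhibits, inside $H$, a minor on six vertices isomorphic to $H_3$ (with $i$ playing the role of the central corner $v_2$, and $a,b$ playing the role of the subdivision vertices $v_{12},v_{23}$). This contradicts $G\in \FF(H_3)$, so no two subdivided paths of $H$ can share a corner. Since any three edges of $K_4$ contain two that share a vertex, $H$ has at most two subdivided paths, and if exactly two, they form a perfect matching on the corners of $H$. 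I then label the corners $1,2,3,4$ so that the (at most two) subdivided paths are among $P_{13}$ and $P_{24}$, which produces the desired partition.

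\textbf{Step 2 (proof of (ii)).} By (i), the paths $P_{12}, P_{14}, P_{23}, P_{34}$ consist only of their endpoints, all of which are corners, so $V(H)=V(P_{13})\cup V(P_{24})$. Since $G$ is 2-connected and hence connected, every component $C$ of $G\setminus H$ has at least one neighbor in $V(H)$, which necessarily lies in $V(P_{13})$ or in $V(P_{24})$; this is exactly (ii).

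\textbf{Main obstacle.} There is no serious obstacle: the lemma is a local structural fact about a single homeomorph $H$, and once $H_3$ is recognized as $K_4$ with two adjacent subdivided edges, both parts follow directly. The hypotheses $n\ge 6$ and full 2-connectedness play essentially no role here --- mere connectedness suffices for (ii), and $n\ge 6$ is not used --- but are retained in the statement for uniformity with subsequent lemmas (in Section~\ref{secnoH3}) that refine this structural description.
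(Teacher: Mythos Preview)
Your proof of (i) is correct and matches the paper's argument: two subdivided paths of $H$ sharing a corner yield an $H_3$ minor inside $H$ itself.

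However, your proof of (ii) rests on a misreading of the statement. You interpret ``$C$ is connected to $P_{13}$ or to $P_{24}$'' as ``$C$ has at least one neighbor in $V(P_{13})\cup V(P_{24})$'', which is indeed trivial since $V(H)=V(P_{13})\cup V(P_{24})$. The intended meaning, made clear both by the paper's proof and by how the lemma is used in Corollary~\ref{cor_noF6G6_struct} and Lemma~\ref{lem_K4ind}, is an \emph{exclusive} statement: the neighborhood of $C$ in $H$ is contained entirely in $V(P_{13})$, or entirely in $V(P_{24})$. Under this reading your Step~2 proves nothing, and the claim that $n\ge 6$ and $2$-connectedness are unused is also wrong. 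Concretely, when $H=K_4$ (so (i) is vacuous for every partition), one still has to \emph{choose} the partition so that the components of $G\setminus H$ sort themselves between the two sides; this is where the paper does real work, using $2$-connectedness to ensure each component attaches to at least two corners, and $n\ge 6$ to rule out a single one-vertex component attached to three corners (e.g.\ $K_5\setminus e$ would be a counterexample with $n=5$). When $H\ne K_4$, one must still argue that no component of $G\setminus H$ can send edges to both $P_{13}$ and $P_{24}$; the paper does this by exhibiting an $H_3$ minor in that situation. Your argument supplies neither of these pieces.
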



\begin{proof} We use the graphs from Figure \ref{fig_F6G6comp_minors} which all contain a subgraph $H_3$. \\
{\bf Case 1:} $H=K_4$. 
If $G\setminus H$ has a unique  component $C$ then $|C|\ge 2$ as $n\ge 6$.  
   If $C$  is connected to two nodes of $H$, then the conclusion of the lemma holds.
  Otherwise,  $C$ is connected to at least three nodes in $H$ and then the graph from Figure~\ref{fig_F6G6comp_minors}\,a) is a minor of $G$, a contradiction.

If there are at least two components in $G\setminus H$, then they cannot be connected to two adjacent edges of $H$  for, otherwise,
 the graph of Figure \ref{fig_F6G6comp_minors}\,b) is a minor of $G$, a contradiction. Hence the lemma holds.

\noindent
{\bf Case 2:}  $H\neq K_4$. 
Say, $P_{13}$ has at least 3 nodes. Then  the edges $\{1,i\}$, $\{3,i\}$ ($i=2,4$) cannot be subdivided (else $H$ is a homeomorph of $H_3$). So (i) holds. We now show (ii).
Indeed,  if a component of $G\setminus H$ is connected to both $P_{13}$ and $P_{24}$, then at least one of the graphs in Figure \ref{fig_F6G6comp_minors} c) and d) will be a minor of $G$, a contradiction. 
\end{proof}
\begin{figure}[h!]
\bc  \includegraphics[scale=0.65]{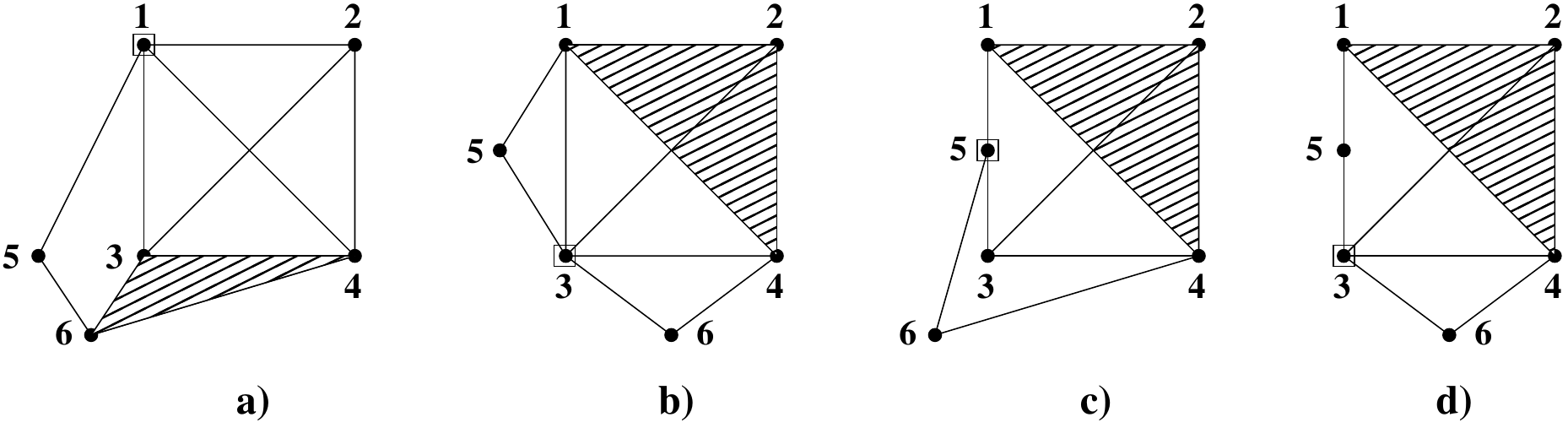}
\caption{Bad subgraphs  in the proof of Lemma \ref{lem2sides}.}\label{fig_F6G6comp_minors}
\ec
\end{figure}

Lemma \ref{lem2sides} implies that there is no path with at least 3 nodes  between the sides of a $K_4$-homeomorph. We now show that, moreover,
 there is no additional edge between the two sides. More precisely: 
\begin{lemma}\label{lem_K4ind}
Let  $G\ne K_{3,3}$ be a 2-connected graph in $\mathcal{F}(H_3)$ on  $n\ge 6$ nodes and let  $H$ be a homeomorph of $K_4$ contained in $G$.
Then  there exists  no edge between the two sides of $H$ except between their endpoints.
\end{lemma}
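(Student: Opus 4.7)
Assume by contradiction that an edge $e=\{u,v\}$ exists with $u\in V(P_{13})$ and $v\in V(P_{24})$ which is not of the form $\{i,j\}$ with $i\in\{1,3\}$ and $j\in\{2,4\}$. By the symmetry of the roles of the two sides (and of their two endpoints), I may assume throughout that $u=w_k$ is an internal vertex of $P_{13}=1\,w_1\cdots w_{s-1}\,3$. Write $P_{24}=2\,z_1\cdots z_{t-1}\,4$, and when $v$ is internal let $v=z_l$. The hypothesis $n\ge 6$ gives $s+t\ge 4$. The strategy is to exhibit $H_3$ as a minor of $G$ in every case, except in one extremal situation which will force $G=K_{3,3}$, contradicting the hypothesis.

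In Case~A I assume $v=2$ (the case $v=4$ is symmetric). If $t\ge 2$ I use the six connected classes $v_1=\{2\}$, $v_2=\{4\}$, $v_3=\{u\}$, $v_{13}=\{1,w_1,\dots,w_{k-1}\}$, $v_{12}=\{z_1,\dots,z_{t-1}\}$, $v_{23}=\{w_{k+1},\dots,w_{s-1},3\}$, and then verify the eight edges of $H_3$: the edge $v_1v_3$ is the bridge $\{u,2\}$; the three remaining $K_4$-edges $v_1v_{13},v_2v_{13},v_3v_{13}$ are $\{2,1\},\{4,1\}$ and the first edge of the path $v_{13}$; and the four subdivision edges come from the two end-edges of the paths $v_{12},v_{23}$ together with the corner edges $\{4,3\}$ and $\{z_{t-1},4\}$. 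When $t=1$ the edge $\{2,4\}$ is present and $n\ge 6$ forces $s\ge 3$; in that case I shift $v_{12},v_{23}$ to the two $P_{13}$-neighbours of $u$ and enlarge $v_1$ to the connected set $\{1,4,w_1,\dots,w_{k-2}\}$, so that $v_1v_3$ is realised by $\{4,3\}$ and $v_1v_{13}$ by $\{1,2\}$; the boundary positions $k=1$ and $k=s-1$ require only cosmetic adjustments.

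In Case~B both $u=w_k$ and $v=z_l$ are internal. If $s+t\ge 5$, I relabel so that $s\ge 3$ and use the mirror of the Case~A construction, with the bridge now realising the $K_4$-edge $v_2v_{13}$: take $v_2=\{u\}$, $v_{13}=\{v\}$, $v_{12}=\{w_{k-1}\}$, $v_{23}=\{w_{k+1}\}$, and build $v_1,v_3$ by concatenating halves of $P_{13}$, halves of $P_{24}$, and the corner edges, so that $v_1v_3$ is realised by a corner edge such as $\{1,4\}$. The remaining possibility is $s=t=2$; then $H\cup\{e\}$ has exactly nine edges and a direct bipartition check with parts $\{1,3,v\}$ and $\{2,4,u\}$ shows $H\cup\{e\}\cong K_{3,3}$. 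If $G=H\cup\{e\}$ then $G=K_{3,3}$, contradicting the hypothesis. Otherwise $G$ contains an additional edge or vertex, and I reduce to an already-treated situation: an extra intra-part edge either creates a new bridge between a corner and an internal (returning to Case~A) or provides the missing $K_4$-edge $v_1v_3$ in a direct $H_3$-embedding (for example, an extra edge $\{1,3\}$ gives the assignment $v_1=1,v_2=v,v_3=3,v_{13}=u,v_{12}=2,v_{23}=4$). An extra vertex $x$ attaches, by Lemma~\ref{lem2sides}(ii) and the 2-connectedness of $G$, to at least two vertices of a single side of $H$; re-choosing $H$ so that $x$ becomes an internal vertex of that side yields a $K_4$-homeomorph with $s+t\ge 5$, handled above.

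The main obstacle is the boundary subcase $s=t=2$ of Case~B, where the bipartite rigidity of $K_{3,3}$ excludes every $H_3$-minor and forces the exception $G\ne K_{3,3}$ of the statement. Controlling how any extra edge or vertex of $G$ beyond this $K_{3,3}$ reduces either to a case with $s+t\ge 5$ or to a Case~A bridge is the most delicate step; once this reduction is in place, all other subcases amount to writing down six explicit connected classes and checking the eight edges of $H_3$, which is a routine verification guided by the position of $u$ on $P_{13}$ and, when applicable, of $v$ on $P_{24}$.
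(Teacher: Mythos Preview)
Your explicit $H_3$-partitions are mostly correct, but there is a genuine gap in Case~A when $t=1$. You write ``$n\ge 6$ forces $s\ge 3$'', and earlier ``the hypothesis $n\ge 6$ gives $s+t\ge 4$''. This confuses $|V(G)|$ with $|V(H)|$: the homeomorph $H$ has $s+t+2$ vertices, and nothing prevents $s=2$, $t=1$, $|V(H)|=5$ while $n\ge 6$. In that situation your ``cosmetic adjustments'' cannot work, because $H$ has only five vertices and $H_3$ has six, so no $H_3$-minor can be found inside $H\cup\{e\}$ alone. You must bring in a vertex of $G\setminus H$, and Case~A never does this. The paper treats exactly this subcase separately: since $n\ge 6$ there is a component of $G\setminus H$, which by Lemma~\ref{lem2sides}(ii) attaches to $P_{13}$ or to $\{2,4\}$; in either case one checks that a small graph containing $H_3$ appears as a minor.

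A smaller issue: in the $s=t=2$ subcase of Case~B, your ``extra vertex'' reduction is underspecified. If the outside component attaches only to the two endpoints $1$ and $3$ of $P_{13}$ (and not to $u$), re-routing $P_{13}$ through it drops $u$ from the homeomorph, so the bridge $\{u,v\}$ is no longer an edge between two sides and you cannot simply invoke the $s+t\ge 5$ construction. This is easily repaired: once $H\cup\{e\}\cong K_{3,3}$ is a proper subgraph of the $2$-connected graph $G$, Lemma~\ref{lemblock} already gives an $H_3$-minor directly. In fact this is exactly the route the paper takes for both internal endpoints with $|V(H)|=6$. Apart from these two points, your case analysis is sound; the paper's argument is organised around spotting a new $K_4$-homeomorph with two adjacent subdivided edges (hence an $H_3$-homeomorph) rather than writing down six-class partitions, which makes the $|V(H)|\ge 6$ subcases shorter but is otherwise equivalent to what you do.
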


\begin{proof}
Say, $P_{13}$ and $P_{24}$ are the two sides of $H$. Assume for a contradiction that $\{a,b\}\in E(G)$, where $a$ lies on $P_{13}$ and $b$ on $P_{24}$.

Assume first  that $a$ is an internal node of $P_{13}$ and $b$ is an internal node of $P_{24}$.
If $|V(H)|=6$, then  $H=K_{3,3}$ and   Lemma \ref{lemblock} implies that $G$ has a $H_3$ minor,  a contradiction. Hence,  $|V(H)|> 6 $  and we can assume 
w.l.o.g.~that the path from $1$ to $a$ within $P_{13}$ has at least 3 nodes. 
Then $G$ contains a homeomorph of $K_4$ with corner nodes $1,b,4,a$, where the two paths from $1$ to $a$ and from $1$ to $b$ (via $2$) have at least 3 nodes,  giving  
 a $H_3$ minor and thus a contradiction. 

Assume now that only $a$ is an internal node of $P_{13}$ and, say  $b=2$.
If  $|V(H)|=5$, then $G\setminus H$ has at least one component. By Lemma \ref{lem2sides}, this component  connects either to 
the path $P_{13}$ or to the edge $\{2,4\}$. In both cases, it is easy to verify  that one of  the graphs in Figure \ref{fig_indK4} will be  a minor of $G$,  a contradiction since all  of them have a $H_3$ subgraph.
On the other hand, if $|V(H)|\ge6$,  then  one of the paths from $1$ to $a$, from $a$ to $3$ (within $P_{13}$),  or $P_{24}$ is subdivided.  This implies that  $G$ contains a homeomorph of $K_4$ with corner nodes $a,1,2,4$ or $a,2,3,4,$ which thus contains  two adjacent subdivided edges, giving a $H_3$ minor. 
\end{proof}
\begin{figure}[h!]
\bc  \includegraphics[scale=0.75]{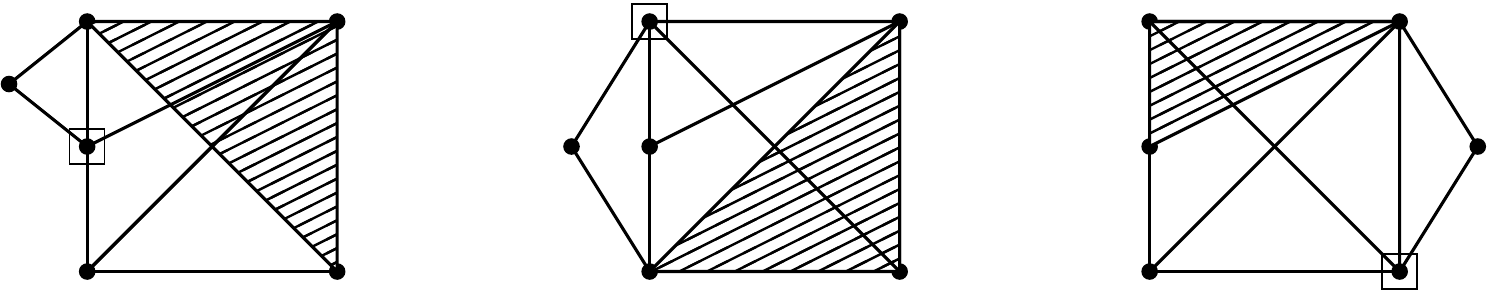}
\caption{Bad subgraphs in the  proof of Lemma \ref{lem_K4ind}.}\label{fig_indK4}
\ec
\end{figure}

Lemmas \ref{lem2sides} and \ref{lem_K4ind} imply directly:

\begin{corollary}\label{cor_noF6G6_struct}
Let $G\ne K_{3,3}$ be a 2-connected graph in $\mathcal{F}(H_3)$ on  $n\ge 6$ nodes and let $H$ be a homeomorph of $K_4$ contained in $G$.
 Then
the endnodes of at least one of the two sides  of $H$ form a cutset in $G$.
Moreover, if  $P_{13}$ is a side of $H$ and  its endnodes  $\{1,3\}$ do not form a cutset, then $P_{13}=\{1,3\}$ and there is no component of $G\setminus H$ which is connected to $P_{13}$. 
\end{corollary}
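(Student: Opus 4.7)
The plan is to derive both assertions directly from Lemmas \ref{lem2sides} and \ref{lem_K4ind}. I will first prove the ``moreover'' statement and then deduce the main claim from it by contradiction. The key structural inputs are the following: by Lemma \ref{lem2sides}(i), all edges of $K_4$ other than $\{1,3\}$ and $\{2,4\}$ remain unsubdivided in $H$, so $V(H)$ consists of the four corners together with the internal nodes of $P_{13}$ and $P_{24}$; by Lemma \ref{lem_K4ind}, no edge of $G$ connects $P_{13}$ to $P_{24}$ beyond the four original corner-edges of $K_4$; and by Lemma \ref{lem2sides}(ii) (whose proof actually forbids a component touching both sides simultaneously) every component of $G\setminus H$ is attached to exactly one of the two sides.

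For the moreover part, assume that $\{1,3\}$ is not a cutset of $G$. I would show first that $P_{13}$ has no internal node. Indeed, for any such internal node $v$, its neighbors in $G\setminus\{1,3\}$ lie in $P_{13}\setminus\{1,3\}$ or in components of $G\setminus H$ attached to $P_{13}$, because Lemma \ref{lem_K4ind} forbids edges from $v$ into $P_{24}$; but these external components do not touch $P_{24}$ either, so $v$ is unable to reach $P_{24}$ after the deletion, contradicting the assumption. Hence $P_{13}=\{1,3\}$. The identical argument applied to any component $C$ of $G\setminus H$ attached to $P_{13}$ shows that $C$ is separated from $P_{24}$ in $G\setminus\{1,3\}$, so no such component can exist.

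For the first assertion, I would argue by contradiction: if neither $\{1,3\}$ nor $\{2,4\}$ is a cutset, applying the moreover part to both sides yields $P_{13}=\{1,3\}$, $P_{24}=\{2,4\}$, and no component of $G\setminus H$ is attached to either side. By Lemma \ref{lem2sides}(ii) there are then no components of $G\setminus H$ at all, so $G=H=K_4$ on only $4$ nodes, contradicting $n\ge 6$. The only mild subtlety is reading Lemma \ref{lem2sides}(ii) as an exclusive ``or'', but this is already established inside its proof; beyond that the argument is purely combinatorial and I do not foresee any real obstacle.
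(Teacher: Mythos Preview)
Your proposal is correct and follows exactly the route the paper intends: the paper simply states that the corollary follows directly from Lemmas~\ref{lem2sides} and~\ref{lem_K4ind} without spelling out the details, and your argument is a faithful unpacking of that implication. Your only flagged subtlety---reading Lemma~\ref{lem2sides}(ii) as an exclusive ``or''---is indeed justified by the proof of that lemma, so nothing is missing.
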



We now show that one may add edges to $G$ so that all minimal homeomorphs of $K_4$ are 4-cliques, without creating  a $F_3$ or $H_3$ minor.

\begin{lemma}\label{lem_missing_edge}
Let  $G\ne K_{3,3}$ be  a 2-connected  graph in $\mathcal{F}(F_3,H_3)$ on $n\ge 6$ nodes and let  $H$ be a homeomorph of $K_4$ contained in $G$.
The graph obtained by adding to $G$ the edges between the endpoints of the sides of  $H$ belongs to $\mathcal{F}(F_3,H_3)$.
\end{lemma}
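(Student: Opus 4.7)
I would add the two candidate edges $\{1,3\}$ and $\{2,4\}$ one at a time; once one is added, the structural data around the other side of $H$ is unaffected, so a single proof step suffices and then one iterates. By Corollary \ref{cor_noF6G6_struct}, any side that is not already an edge has its endpoints forming a cutset $\{x,y\}$ of $G$ (otherwise nothing to add), so that Lemma \ref{lem_minor_cut} is available; let $r\ge 2$ denote the number of components of $G\setminus\{x,y\}$. The $H_3$ part is then immediate: a direct inspection of $H_3$ (which has only eight edges) shows that no pair of adjacent vertices of $H_3$ forms a cutset in $H_3$, so Lemma \ref{lem_minor_cut}(ii) applied with $M=H_3$ gives $G+\{x,y\}\in\FF(H_3)$.

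For the $F_3$ part I would invoke Lemma \ref{lem_minor_cut}(i). Assuming toward contradiction an $F_3$-partition $\{V_k:k\in V(F_3)\}$ of $G+\{x,y\}$ with $x\in V_i, y\in V_j$, one has that $F_3\setminus\{i,j\}$ must have at least $r$ components. Enumerating adjacent pairs in $F_3$, only the three edges $\{v_a,v_b\}$ of the central triangle disconnect $F_3$, each giving exactly two components (a singleton $\{v_{ab}\}$ and a triangle $\{v_c,v_{ac},v_{bc}\}$). Hence $r=2$ and, without loss of generality, $\{i,j\}=\{v_1,v_2\}$. Writing $G\setminus\{x,y\}=C_{xy}\cup C'$, where $C_{xy}$ contains the internal vertices of the relevant side $P_{xy}$ together with the components of $G\setminus H$ attached to it, and $C'$ contains the other two corners $u,v$ of $H$, all of $P_{uv}$, and its attached components, the two components of $F_3\setminus\{v_1,v_2\}$ correspond bijectively to $\{C_{xy},C'\}$.

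Two cases then arise according to which side receives the singleton class $V_{v_{12}}$. When $V_{v_3},V_{v_{13}},V_{v_{23}}\subseteq C'$, Lemmas \ref{lem2sides}(i) and \ref{lem_K4ind} together imply that the only nodes of $C'$ adjacent to $x$ or $y$ are the two corners $u,v$; a counting argument then rules this case out, since three pairwise-disjoint connected classes cannot all realise their required adjacencies with $V_{v_1}$ or $V_{v_2}$ through only these two gateway nodes. The opposite case $V_{v_3},V_{v_{13}},V_{v_{23}}\subseteq C_{xy}$ is the main obstacle, because attached components may multiply the neighbours of $x$ and $y$ inside $C_{xy}$ and a naive count fails. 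I plan to handle it by building an $F_3$-minor directly in $G$ (contradicting $G\in\FF(F_3)$): keep the three classes inside $C_{xy}$ untouched, and rebuild the triangle $\{V_{v_1},V_{v_2},V_{v_{12}}\}$ inside $C'\cup\{x,y\}$ by placing $x,u$ into the new $V_{v_1}$ and $y,v$ into the new $V_{v_2}$, so that the $v_1v_2$-edge is witnessed by $\{x,v\}$ or $\{u,y\}$ (both edges of $H$ by Lemma \ref{lem2sides}(i)), and letting $V_{v_{12}}$ be a connected piece of $C'\setminus\{u,v\}$ adjacent to both $u$ and $v$ (typically the interior of $P_{uv}$, or a suitable subset of the original $V_{v_{12}}$). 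Degenerate configurations, such as $V_{v_{12}}=\{u,v\}$ with $P_{uv}$ being a single edge, will be dispatched separately by exploiting 2-connectedness of $G$, $n\ge 6$, and the assumption $G\ne K_{3,3}$.
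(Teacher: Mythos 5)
Your opening moves coincide with the paper's: you get the cutset from Corollary \ref{cor_noF6G6_struct}, dispose of $H_3$ via Lemma \ref{lem_minor_cut}(ii) (correctly observing that no adjacent pair of $H_3$ is a cutset of $H_3$), and use Lemma \ref{lem_minor_cut}(i) to force the classes of $x,y$ onto a pair $\{v_1,v_2\}$ of the central triangle of $F_3$ with $r=2$. The endgame, however, has a genuine gap. In your Case A (the classes $V_{v_3},V_{v_{13}},V_{v_{23}}$ all inside $C'$), the counting argument does not work in the sub-case where \emph{both} $V_{v_1}$ and $V_{v_2}$ extend into $C'$, one entering through the corner $u=2$ and the other through $v=4$. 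Then the required adjacencies of the three classes to $V_{v_1}$ and $V_{v_2}$ can be realized entirely by edges inside $C'$ (to the portions $V_{v_1}\cap C'$ and $V_{v_2}\cap C'$), so no contradiction follows from the fact that $2$ and $4$ are the only neighbours of $\{x,y\}$ in $C'$. This sub-case can be killed, but only by the same kind of rebuilding you reserve for Case B: drop the $C_{xy}$-parts of $V_{v_1},V_{v_2}$, keep $W_{v_1}=\{1\}\cup(V_{v_1}\cap C')$ and $W_{v_2}=\{3\}\cup(V_{v_2}\cap C')$ (connected via $\{1,2\}$ and $\{3,4\}$), realize $v_1v_2$ by the $H$-edge $\{1,4\}$ or $\{2,3\}$, and take as the new $V_{v_{12}}$ the interior of $P_{13}$, which is nonempty precisely because $\{1,3\}\notin E(G)$; this produces an $F_3$-minor of $G$ itself. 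As written, your proof does not cover this configuration.

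Two smaller points. First, the asserted bijection between the two components of $F_3\setminus\{v_1,v_2\}$ and the two components of $G\setminus\{x,y\}$ is not part of the statement of Lemma \ref{lem_minor_cut}(i); if both $F_3$-components land in one $G$-component you must reroute the $v_1v_2$-edge through the other component to get an $F_3$-minor of $G$, i.e.\ you are re-proving part of that lemma (likewise, restricting the enumeration to \emph{adjacent} pairs of $F_3$ needs a word of justification, though harmlessly so since the only $2$-cutsets of $F_3$ are the central-triangle edges). Second, your Case B and its degenerate configurations are only sketched. For comparison, the paper's own endgame is different and avoids your case split: it shows that $2$ and $4$ must lie in classes distinct from each other and from those of $1$ and $3$ (otherwise one reshuffles a corner into $V_{v_1}$ and gets an $F_3$-partition of $G$), and then derives the contradiction from the observation that every $1$--$2$ path in $G$ is the edge $\{1,2\}$ or meets $3$ or $4$, and every $2$--$3$ path is the edge $\{2,3\}$ or meets $1$ or $4$. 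Your route is viable, but to be complete it needs the rebuilding argument in Case A as well, plus the bijectivity and Case B details filled in.
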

\begin{proof}Say $P_{13}$ and $P_{24}$ are the sides of $H$.  Assume  $|V(P_{13})|\ge 3$ and $\{1,3\}\not\in E(G)$.  
By Corollary \ref{cor_noF6G6_struct},  $\{1,3\}$ is a cutset in $G$. We  show that $\hG=G+\{1,3\}\in\mathcal{F}(F_3,H_3)$. 
First,  applying Lemma \ref{lem_minor_cut} (ii) with $M=H_3$
 and $\{x,y\}=\{1,3\}$, we obtain that $\hG\in\mathcal{F}(H_3)$.

\begin{figure}[!h]
\bc  \includegraphics[scale=0.45]{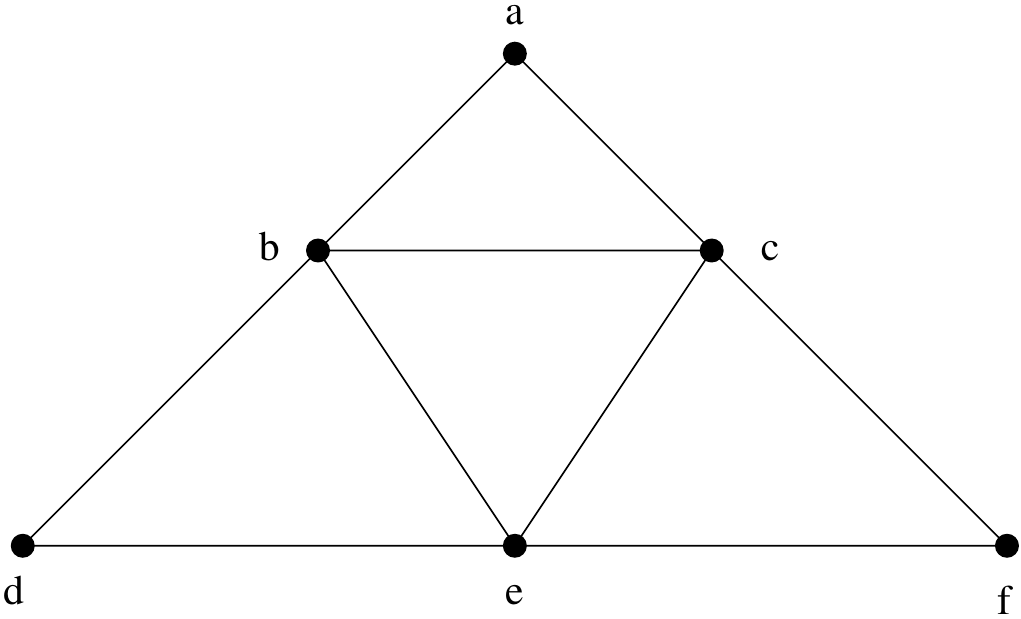}
 \caption{A labelling of $F_3$ used in Lemma \ref{lem_missing_edge}.}
 \label{fwfwe}
\ec
 \end{figure}

Next,  assume for contradiction that $\hG$ has a $F_3$ minor, where the  labelling  of $F_3$ is given  in Figure \ref{fwfwe}.    Applying Lemma \ref{lem_minor_cut} (i) with $M=F_3$  
and  $\{x,y\}=\{1,3\}$,  we see  that the nodes $1$ and $3$ belong to   distinct classes  of  the $F_3$-partition, 
which   corresponds to a cutset of $F_3$. Say, $1\in V_e$ and $3\in V_c$.
Then the nodes 2 and 4 do not lie in $V_e\cup V_c$ 
(for otherwise, one would have an $F_3$-partition in $G$).
 Next we show that the nodes $2$ and $4$ do not belong to  the same  class of the $F_3$-partition. Assume for contradiction  that $2,4\in V_k$.
 If $\{2,4\}$ is not a cutset in $G$ then, by Corollary \ref{cor_noF6G6_struct},
  $P_{24}=\{2,4\}$ and no component of $G\setminus H$ connects to $\{2,4\}$. 
 Hence $V_k=\{2,4\}$ and  we can move node $2$ to the class $V_e$, so that we obtain a $F_3$-partition of $G$, a contradiction.
 If $\{2,4\}$ is a cutset of $G$, then  every component of $G\setminus\{2,4\}$ except the one containing $1$ and $3$ has to lie within $V_k$,    so we can again move  node $2$ to $V_e$ and obtain a $F_3$-partition of $G$. 

Accordingly, the nodes $1,2,3$ and $4$ belong to  distinct classes and we can  assume without loss of generality that $2\notin V_f$. Observe that 
every $1-2$ path in $G$ is either the edge $\{1,2\}$ or meets the  nodes 3  or 4.   Similarly, every $2-3$ path in $G$ is either the edge $\{2,3\}$ or meets  the nodes 1 or 4.  An easy case analysis shows that whatever the position of nodes  2 and 4 in the $F_3$-partition we  always find a $1-2$ or a $2-3$ path violating the above conditions. 
\end{proof}

\if 0 
Here is a last fact that we will use in the proof of  Theorem \ref{theonoF3H3} below.
 \begin{lemma}\label{claim2}
Let  $G$chordal, 2-connected  be a free graph and let $C=(a,b,c,d)$ be a circuit in $G$, where $\{a,c\}\in E(G)$ and $\{b,d\}\not\in E(G)$.
Assume  that every component of $G\setminus C$ connects to $\{a,d\}$ or to $\{b,c\}$. Then, $G+\{b,d\}$ is free.
 \end{lemma}
 \begin{proof}
 This is clear since by adding the edge $\{b,d\}$ we only replace the two maximal triangles $\{a,c,d\}$ and $\{a,b,c\}$ by a new maximal 4-clique $\{a,b,c,d\}$, which is free.
 \qed \end{proof}
\fi 
We are now ready to   show the main result of this section.

\begin{theorem}\label{theonoF3H3}
Let  $G$ be  a 2-connected  graph  with   $n\ge 6$ nodes and $G\ne K_{3,3}$. If $G \in \mathcal{F}(F_3,H_3)$ then there exists a chordal graph $Q\in \FF(F_3,H_3)$ containing $G$ as a subgraph.
\end{theorem}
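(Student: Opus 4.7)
The plan is to proceed by strong induction on $n=|V(G)|$. The base case consists of the finitely many 2-connected graphs in $\mathcal{F}(F_3,H_3)\setminus\{K_{3,3}\}$ on $n=6$ vertices, which I would verify directly by hand. For the inductive step, I would first dispose of the easy case $G\in\mathcal{F}(K_4)$: then $G\in\mathcal{F}(F_3,K_4)$ and Theorem~\ref{theonoF3K4} directly supplies a chordal extension $Q\in\mathcal{F}(F_3,K_4)\subseteq\mathcal{F}(F_3,H_3)$. So I may assume $G$ contains a $K_4$-homeomorph.

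The next step is to apply Lemma~\ref{lem_missing_edge} exhaustively, adding the side-edges between the endpoints of the sides of every $K_4$-homeomorph. Each addition stays inside $\mathcal{F}(F_3,H_3)$, and the process terminates with a supergraph $G^*\supseteq G$ in which every $K_4$-homeomorph has both sides being single edges; equivalently, the four corners of every $K_4$-homeomorph induce an actual $K_4$-subgraph in $G^*$. Now pick a $K_4$-subgraph $K=\{a,b,c,d\}$ of $G^*$. Corollary~\ref{cor_noF6G6_struct} ensures that one side, say $\{a,c\}$, is a 2-cutset of $G^*$, and Lemma~\ref{lem2sides} together with Lemma~\ref{lem_K4ind} guarantee that the components of $G^*\setminus K$ partition cleanly between the two sides, with no cross-edges except at the corners. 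This yields a 2-sum decomposition $G^*=G_A\cup G_B$ along the edge $\{a,c\}$, where $G_B$ contains the whole $K_4$ and all components attached to $\{b,d\}$.

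I would then apply the inductive hypothesis to $G_A$ to obtain a chordal extension $Q_A\in\mathcal{F}(F_3,H_3)$. Small cases with $|V(G_A)|\le 5$ are handled by taking $Q_A$ to be the complete graph on $V(G_A)$, which cannot contain $F_3$ or $H_3$ as a minor for dimensional reasons. The degenerate case $G_A=K_{3,3}$ needs to be excluded separately; I would argue it cannot occur, since $G_A$ contains the edge $\{a,c\}$ and, combined with the $K_4$ in $G_B$ attached at $\{a,c\}$, one readily produces an $H_3$-minor in $G^*$, contradicting $G^*\in\mathcal{F}(H_3)$. The candidate chordal extension is then $Q=Q_A\cup G_B$, a 2-sum of chordal graphs along the clique $\{a,c\}$, which is again chordal.

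The main obstacle is verifying $Q\in\mathcal{F}(F_3,H_3)$. The $H_3$-part is clean: a direct inspection of $H_3$ shows that it has no 2-cutset of adjacent vertices, so Lemma~\ref{lem_minor_cut}(ii) applies and the edge-gluing cannot produce an $H_3$-minor. The $F_3$-part is substantially harder, because the three edges of the central triangle of $F_3$ are all adjacent 2-cutsets, and so the analogous argument fails. A putative $F_3$-minor in $Q$ would, by Lemma~\ref{lem_minor_cut}(i), force the classes containing $a$ and $c$ to play the role of two central-triangle vertices; the singleton outer corner of $F_3$ that is separated by the cut would then have to sit inside $V(G_B)\setminus\{a,c\}$ (with $\{b\}$ or $\{d\}$ as the natural candidate), while the remaining five classes would form an $F_3\setminus\{v_4\}$-minor of $Q_A$ anchored at the prescribed vertices $a,c$. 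The key step, and the hardest point, is then a pullback: combined with the adjacency of $b$ or $d$ to $a,c$ that is already present in $G^*$, such a minor in $Q_A$ must be upgraded to a full $F_3$-minor in $G^*$, contradicting $G^*\in\mathcal{F}(F_3)$. Making this pullback rigorous requires controlling which partial $F_3\setminus\{v_4\}$-minors the chordal extension $Q_A$ can carry, and I expect one will need to exploit the free-graph structure provided by Theorem~\ref{theofree} to argue that any 2-connected free chordal supergraph of $G_A$ inherits its anchored partial minors from $G_A$ itself.
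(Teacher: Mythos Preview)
Your approach diverges from the paper's and, as written, has a genuine gap. The claim that $Q=Q_A\cup G_B$ is ``a 2-sum of chordal graphs'' is false in general: $G_B$ consists of the 4-clique $\{a,b,c,d\}$ together with all components of $G^*\setminus K$ attached to the side $\{b,d\}$, and there is no reason those components should be chordal. If $\{b,d\}$ is also a cutset (which Corollary~\ref{cor_noF6G6_struct} certainly allows), then $G_B$ can be an arbitrary 2-connected graph in $\FF(F_3,H_3)$ containing a $K_4$, and you have made no progress. You would need to recurse on $G_B$ as well, and then glue two chordal extensions $Q_A,Q_B$ along the edge $\{a,c\}$; but at that point the $F_3$-verification you already flag as ``the hardest point'' becomes strictly harder, since you have lost the explicit $K_4$ on the $G_B$ side and must control anchored partial minors in \emph{both} chordal extensions simultaneously. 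Your invocation of Lemma~\ref{lem_minor_cut} is also not quite right: that lemma concerns adding a single edge across a cutset, not gluing two unrelated graphs along a clique, so the $H_3$-closure under edge 2-sums needs its own (admittedly routine) argument.

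The paper sidesteps all of this with a different, global reduction. After passing to $\hG=G^*$ exactly as you do, it deletes from \emph{each} 4-clique of $\hG$ one transversal edge (an edge joining the two sides). The resulting graph $G_0$ lies in $\FF(F_3,K_4)$ and is still 2-connected, so Theorem~\ref{theonoF3K4} yields a single chordal extension $Q_0$ that is a clique 2-sum of free triangles. Adding each deleted edge back merges two adjacent free triangles into a free $K_4$; the paper checks directly that freeness is preserved, using that the components of $Q_0\setminus C$ still attach only to the two sides of $C$. One then concludes $Q\in\FF(F_3,H_3)$ via the equivalence in Theorem~\ref{theofree}, never having to argue about $F_3$-minors in a glued graph. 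The point is that working with the ``free'' structural characterization rather than with the forbidden minors themselves makes the re-insertion step local and easy, whereas your inductive 2-sum decomposition forces you into exactly the minor-pullback problem you identify but do not solve.
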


\begin{proof}If $G\in\mathcal{F}(F_3,K_4)$ then we are done  by Theorem \ref{theonoF3K4}. Otherwise, we augment the graph $G$ by adding the edges between the endpoints of the sides of every homeomorph of $K_4$ contained in $G$. Let $\hG$ be the graph obtained in this way. By Lemma~\ref{lem_missing_edge},  we know  that $\hG\in\mathcal{F}(F_3,H_3)$. Hence, for each  $K_4$-homeomorph $H$ in $\hG$, its  corners  form a 4-clique. Moreover, if $C,C'$ are two distinct 4-cliques of $\hG$, then $C\cap C'$ is contained in a side of $C$ and $C'$. 

Consider a 4-clique $C=\{1,2,3,4\}$ in $\hG$, say with sides $\{1,3\}$, $\{2,4\}$ (so  each component of $\hG\setminus C$ connects to $\{1,3\}$ or to $\{2,4\}$, by Lemma \ref{lem2sides}). 
Pick an edge $f$ between the two sides (i.e., $f=\{i,j\}$ with $i\in \{1,3\}$, $j\in \{2,4\}$) and delete this edge $f$ from $\hG$.
We repeat this process  with  every 4-clique  in $\hG$ and  obtain  the graph $G_0=\hG\setminus\{f_1,\dots,f_k\}$, if $\hG$ has $k$ 4-cliques.

By construction, $G_0$ belongs to $\mathcal{F}(F_3,K_4)$ and  is 2-connected.
Hence, we can apply Theorem \ref{theonoF3K4} to $G_0$ and obtain a chordal  graph $Q_0\in \FF(F_3,K_4)$ containing $G_0$ as a subgraph. 
Hence, $Q_0$ is a clique 2-sum of free triangles. 
It suffices now  to show that the augmented graph $Q= Q_0+\{f_i:  i\in [k]\}$ is a clique 2-sum of free $K_3$'s and $K_4$'s. 
Then   $Q$ is a chordal graph in  $\FF(F_3,H_3)$ (by Theorem \ref{theofree}) containing $\hG$ and thus $G$, and the proof is completed.
 
For this, consider again a 4-clique $C=\{1,2,3,4\}$ in $\hG$ with sides $\{1,3\}$ and $\{2,4\}$.
Then, each component of $\hG\setminus C$ connects to $\{1,3\}$ or $\{2,4\}$. We claim that the same holds  for each component of $Q_0\setminus C$.
 Indeed, 
a component of $Q_0\setminus C$ is a union of some components of $\hG\setminus C$. Thus it connects to two nodes  (to 1,3, or to 2,4),
or to at least three nodes of $C$. But the latter case cannot occur since we would then find a $K_4$ minor in $Q_0$.

Assume that the edge $f=\{1,4\}$ was deleted from the 4-clique $C$ when making the graph $G_0$. We now show that adding it back to $Q_0$ 
 results in a free graph. Indeed, by adding the edge $\{1,4\}$ we only replace the two maximal 3-cliques $\{1,3,4 \}$ and $\{1,2,4\}$ by a new maximal 4-clique $\{1,2,3,4\}$, which is free.   We iterate this process for each of the edges $f_1,\ldots,f_k$  and obtain    that $Q= Q_0+\{f_i: i\in [k]\}$ is  the  clique 2-sum of free $K_3$'s and $K_4$'s. Summarizing, $Q$ is a  2-connected chordal graph  with   $\omega(G)\le 4$ which is free. Then,  Theorem \ref{theofree} $(iii)$ implies that $Q$ does not have $F_3$ or $H_3$ as minors. 
\end{proof}

\if 0
\begin{claim}\label{claim1}
Consider an $S\in\mathcal{S}$. Then there is a labeling of $V(S)$ with $1$, $2$, $3$, $4$ such that each component of $Q_0\setminus S$ is connected to $\{1,3\}$ or $\{2,4\}$. 
\end{claim}
\begin{proof}
Since $\hG\in\mathcal{F}(F_3,H_3)$, the statement is true considering graph $\hG$ instead of $Q_0$. We claim that the same labeling is suitable for $Q_0$. Indeed, otherwise $Q_0$ has an edge either connecting two components of $\hG\setminus S$ or a component to an additional node of $S$. In both cases $Q_0$ contains a $K_4$-homeomorph, which contradicts $Q_0\in\mathcal{C}_f$.
\cqed\end{proof}

 Now we show that adding back the deleted transversal edges $f_i$ does not cause any problem.
 \begin{claim}\label{claim2}
 Let graph $G_1\in\mathcal{C}_f$ contain a circuit on four nodes $1,2,3,4$ with the chord $(1,4)$. Moreover, every component of $G_1\setminus\{1,2,3,4\}$ connects to $\{1,3\}$ or $\{2,4\}$. Then $G_1+(2,3)\in\mathcal{C}_f$.
 \end{claim}
 \begin{proof}
 Adding the edge $(2,3)$ does not modify the property of edges (free/non-free), since only two new triangles will appear, but they are in the same $K_4$, so free edges stay free. Furthermore, the new $K_4$ is also free, because it has no adjacent non-free edges, indeed, only edges $(1,3)$ and $(2,4)$ may be non-free. 
 \cqed \end{proof}

Based on Claim \ref{claim1}, graph $Q_0$ satisfies the conditions of Claim \ref{claim2} for any $\{1,2,3,4\}=V(S), S\in\mathcal{S}$. That is $Q_1=Q_0+f_1\in\mathcal{C}_f$ and iteratively $Q_i=Q_1+\{f_1,\dots,f_i\}\in\mathcal{C}_f$ for all $i=1,\dots,k$, i.e., $Q=Q_k\in\mathcal{C}_f$.  It completes the proof since $G\subseteq Q$.
\fi

%

\section{Characterization of graphs with $\sla(G)\le 2$}\label{seclast}


Recall that the  {\em  largeur d'arborescence}  of a graph $G$, denoted by $\lda(G)$, is defined as the smallest integer $k\ge 1$ such that $G$ is a minor of $T\square K_k$, for some tree $T$. Colin de Verdi\`ere \cite{V98} introduced the  largeur d'arborescence  as upper bound for his graph parameter $\nu(\cdot)$, which is defined as the 
maximum corank of a matrix $A\in \SSS_+^n$ satisfying the condition: $A_{ij}=0$ if and only if $i\ne j$ and $\{i,j\}\not\in E(G)$, as well as the following condition known as the {\em Strong Arnold Property}:
if $AX=0$ where $X\in \SSS^n$ satisfies $X_{ij}=0$ for all $i=j$ and all $\{i,j\}\in E(G)$,  then $X=0$. 

In \cite{V98} it was shown  that  $\nu(\cdot)$ is minor monotone and  that for any graph $G$,   $\nu(G)\le \lda(G)$. Moreover, this holds  with equality for the family of graphs~$G_r$, i.e., $\nu(G_r)=\lda(G_r)=r$ for all $r\ge 2$ (recall  Section \ref{sec_forbminor}). Furthermore, 
\begin{equation}\label{thasemamiso}
\lda(G)\le 1\Longleftrightarrow \nu(G)\le 1\Longleftrightarrow G \text{ has no minor } K_3.
\end{equation}
Lastly, Kotlov \cite{Ko00} shows:
\begin{equation}\label{wefwoerwer}
\lda(G)\le 2 \Longleftrightarrow \nu(G)\le 2 \Longleftrightarrow G \text{ has no minors } F_3, K_4.
\end{equation}
The most work in obtaining the characterization   \eqref{wefwoerwer} is  to show that  $\lda(G)\le 2$ if $G\in \FF(K_4,F_3)$. In fact, this also follows from our characterization of the class $\FF(K_4,F_3)$. 
Indeed, if $G\in \FF(K_4,F_3)$ is 2-connected then we have shown that $G$ is subgraph of $G'$ which is a clique 2-sum of free triangles. Now our argument in the 
proof of Theorem \ref{theofree}
also shows that $G'$ is a contraction minor of $T\Box K_2$ for some tree $T$ (as each triangle of $G'$ arises as  contraction of a 4-clique which can be replaced by a 4-circuit).
In this sense our characterization  is a refinement of Kotlov's result tailored to  our needs.

We now characterize the graphs with $\sla(G)\le 2$. The {\em wheel} $W_5$ is obtained from the circuit $C_4$  by adding a  node adjacent to all nodes of $C_4$. \index{graph!wheel}

\begin{theorem}
For a graph $G$,  $\sla(G)\le 2$ if and only if $G\in \FF(F_3,H_3,W_5)$.
\end{theorem}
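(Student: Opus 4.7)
The plan is to prove the two directions separately and to isolate the lower bound $\sla(W_5) \ge 3$ as the only nonroutine step.

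For the necessity direction, I would argue as follows. Assume $\sla(G)\le 2$. By Corollary \ref{cor:egdlesla} we have $\egd(G)\le \sla(G)\le 2$, so Theorem \ref{thm:main} yields $G\in\FF(F_3,H_3)$. It therefore remains to show that $W_5$ is also a forbidden minor, equivalently that $\sla(W_5)\ge 3$. The key structural observation is that for any tree $T$ the graph $T\stp K_2$ is an iterated $2$-sum of $K_4$'s: for each edge $\{t,t'\}\in E(T)$ the set $\{(t,1),(t,2),(t',1),(t',2)\}$ induces a $K_4$ in $T\stp K_2$, and two such $K_4$'s meeting at a common $T$-vertex $t$ are glued along the ``fiber edge'' $\{(t,1),(t,2)\}$. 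I then want to use the following classical fact: if $G=G_1\oplus_2 G_2$ is a $2$-sum along a pair $\{x,y\}$ and $H$ is a $3$-connected minor of $G$ with $|V(H)|\ge 4$, then $H$ is a minor of $G_1$ or of $G_2$. Given a hypothetical $W_5$-minor in $T\stp K_2$, iterating this fact along the $2$-sum tree forces $W_5$ to be a minor of some $K_4$, which is impossible since $|V(W_5)|=5>4$.

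The main obstacle is the lemma on $3$-connected minors of $2$-sums, but it admits a direct branch-set proof that I would include for self-containedness. Given a minor embedding of a $3$-connected $H$ into $G=G_1\oplus_2 G_2$ with separator $\{x,y\}$, let $V_u$ and $V_v$ be the branch sets containing $x$ and $y$ (if any). Every other branch set $V_w$ lies entirely in one component of $G\setminus\{x,y\}$, and any edge of $H$ joining two such $w,w'$ on opposite sides must cross $\{x,y\}$, which it cannot since $V_w,V_{w'}$ avoid $\{x,y\}$. Hence $H\setminus\{u,v\}$ decomposes as a disjoint union of the two ``sides''; since $H$ is $3$-connected, $H\setminus\{u,v\}$ is connected, so one side is empty. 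After possibly rerouting $V_u$ and $V_v$ (each contains $x$, resp.\ $y$, and lives in the graph $G_i$ containing the marked edge), we conclude that $H$ is a minor of $G_i$.

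For the sufficiency direction, suppose $G\in\FF(F_3,H_3,W_5)$. Using that $\sla$ behaves as $\max$ under clique-$0$ and clique-$1$ sums (see \cite{Ko00}) and that the $2$-connected blocks of $G$ inherit the forbidden-minor property, I may assume $G$ is $2$-connected. If $|V(G)|\le 5$, one verifies by direct inspection that every $2$-connected graph on at most five nodes avoiding $W_5$ (in particular excluding $W_5$ itself and $K_5\supseteq W_5$) is a minor of $K_4=K_2\stp K_2$ or of $P_3\stp K_2$, giving $\sla(G)\le 2$; for example, $K_5-e$ and $K_5-2e$ with the two edges sharing a vertex (which equals $K_5-P_3$) are both minors of $P_3\stp K_2$ obtained by contracting one fiber, and $K_{2,3}$ is a subgraph of the same. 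If $|V(G)|\ge 6$, then $G\ne K_{3,3}$: indeed, contracting any single edge of $K_{3,3}$ produces a new vertex adjacent to the four remaining vertices, which themselves induce a $C_4$, so $W_5\preceq K_{3,3}$ and $K_{3,3}\not\in\FF(W_5)$. Hence Theorem \ref{theomain} applies directly and gives $\sla(G)\le 2$.

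In summary, once the lemma on $3$-connected minors of $2$-sums is established, the whole argument is a short reduction: the forward direction combines $\egd\le\sla$, Theorem \ref{thm:main}, and the bound $\sla(W_5)\ge 3$; the reverse direction reduces to blocks, dispatches the small cases by inspection, observes that $W_5$ is a minor of $K_{3,3}$, and invokes Theorem \ref{theomain}. The lower bound $\sla(W_5)\ge 3$ is the only new ingredient beyond the material already developed in the paper.
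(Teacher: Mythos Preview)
Your proof is correct overall, and the sufficiency direction follows the paper's approach closely: reduce to $2$-connected blocks, handle the small cases directly, observe $W_5\preceq K_{3,3}$, and invoke Theorem~\ref{theomain}. Two small slips in your illustrative examples: $K_5-e$ actually contains $W_5$ as a subgraph (it equals $W_5$ plus one rim diagonal), so it is already excluded from the $W_5$-free list and need not be checked; and $K_{2,3}$ is a minor but not a subgraph of $P_3\stp K_2$ (no three pairwise nonadjacent vertices share two common neighbours there). Neither slip affects the argument, since every $2$-connected $W_5$-free graph on five vertices has a node of degree $2$ and is therefore contained in $K_5-P_3$, which you correctly identify as a contraction of $P_3\stp K_2$.

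Where your proof genuinely differs from the paper is in establishing $\sla(W_5)\ge 3$. You decompose $T\stp K_2$ as an iterated clique $2$-sum of $K_4$'s and use the classical fact that a $3$-connected minor of a $2$-sum lives in one summand; since $W_5$ is $3$-connected on five vertices, it cannot be a minor of any $K_4$. The paper instead argues that if $W_5$ were a minor of $T\stp K_2$, it would sit inside a chordal contraction-minor $H$ of $T\stp K_2$; as $W_5$ is not chordal, $H$ must contain $W_5$ plus a chord of its rim, i.e.\ $K_5\setminus e$, contradicting $\sla(K_5\setminus e)=3$ (a fact stated earlier in the paper). Your route is more self-contained because it does not rely on the separate bound for $K_5\setminus e$; the paper's route is shorter once that bound is taken for granted, and indeed your $2$-sum lemma is essentially what is needed to justify $\sla(K_5\setminus e)=3$ in the first place.
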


\begin{proof}We already know that $\sla(G)\ge \egd(G)=3$ for $G=F_3,H_3$.
Suppose for contradiction that $\sla(W_5)\le 2.$
Then $\sla(W_5)=\sla(H)$ where $H$ is  a chordal extension of $W_5$ and  $H$ is a contraction minor of some $T\stp K_2$. As $W_5$ is not chordal, $H$ contains $W_5$ with one added chord on its 4-circuit, i.e., $H$ contains $K_5\setminus e$ and thus $\sla(H)\ge \sla(K_5\setminus e)=3$.
Therefore, $F_3,H_3,W_5$ are forbidden minors for the property $\sla(G)\le 2$.
Conversely, assume that $G\in\FF(F_3,H_3,W_5)$ is 2-connected, we show that $\sla (G)\le 2$. This is clear if $G$ has $n\le 4$ nodes, 
 or if $G$ has $n=5$ nodes and it has a node of degree 2.
If $G$ has $n=5$ nodes and each node has degree at least 3, then one can easily verify that $G$ contains $W_5$.
 If $G$ has $n\ge 6$ nodes  then $\sla(G)\le 2$ 
 follows from Theorem~\ref{theomain} (since  $G\ne K_{3,3}$ as $W_5\preceq K_{3,3}$).
\end{proof}

%

 Summarizing, the following inequalities are known:
     $$\nu(G)\le \lda(G) \text{ and } \egd(G)\le \sla(G)\le \lda(G).$$
      Moreover, by combining \eqref{thasemamiso}, \eqref{wefwoerwer} and Theorem \ref{thm:main} it follows that if  $G$ is a graph with $\nu(G)\le 2$, then 
$\egd(G)=\nu(G)$. Furthermore, it is known that   $\nu(K_n)=n-1$ \cite{V98} and thus 
$\nu(K_n) > \sla(K_n)\ge\egd(K_n)$ if $n\ge 4$.

An interesting open question is whether the inequality $\egd(G)\le \nu(G)$ holds in general. We point out that the analogous inequality $\nu^=(G)\le \gd(G)$ was  shown in \cite{LV12}. Recall that the parameter  $\nu^=(\cdot)$  is the analogue of $\nu(\cdot)$ studied by van der Holst \cite{H03} (same definition as $\nu(G)$, but now requiring only that $A_{ij}=0$ for $\{i,j\}\in E(G)$ and allowing zero entries at positions on the diagonal and at edges), and $\nu^=(\cdot)$ satisfies:
$\nu(G)\le \nu^=(G)$.

\bigskip
\noindent {\bf Acknowledgements.} We thank two anonymous referees for their useful comments which  helped us to improve the presentation of the paper.

\bibliographystyle{plain}

\end{document}